\DeclareMathOperator{\Ric}{Ric}
\DeclareMathOperator{\Vol}{Vol}
\DeclareMathOperator{\area}{area}
\DeclareMathOperator{\genus}{genus}
\DeclareMathOperator{\ind}{index}
\DeclareMathOperator{\dist}{dist}
\DeclareMathOperator{\injrad}{injrad}
\DeclareMathOperator{\rad}{rad}
\DeclareMathOperator{\Sing}{Sing}
\newtheorem{theo}{Theorem}[]
\newtheorem{prop}[theo]{Proposition}
\newtheorem{lemme}[theo]{Lemma}
\newtheorem{definition}{Definition}[section]
\newtheorem{coro}[theo]{Corollary}
\newtheorem{remarque}{Remark}[section]
\begin{document}
\title[Bounded area 
minimal hypersurfaces]
{Morse index, Betti numbers and singular set of bounded area minimal hypersurfaces}
%with high Morse index}

\author{Antoine Song}

\address{Department of Mathematics\\University of California, Berkeley\\Berkeley, CA 94720}
\email{aysong@math.caltech.edu}

\maketitle

\begin{abstract} 
We introduce a combinatorial argument to study closed minimal hypersurfaces of bounded area and high Morse index. Let $(M^{n+1},g)$ be a closed Riemannian manifold and $\Sigma\subset M$ be a closed embedded minimal hypersurface with area at most $A>0$ and with a singular set of Hausdorff dimension at most $n-7$. We show the following bounds: there is $C_A>0$ depending only on $n$, $g$, and $A$ so that
$$\sum_{i=0}^n b^i(\Sigma) \leq C_A \big(1+\ind(\Sigma)\big) \quad \text{ if $3\leq n+1\leq 7$},$$
$$\mathcal{H}^{n-7}\big(\Sing(\Sigma)\big) \leq C_A  \big(1+\ind(\Sigma)\big)^{7/n} \quad \text{ if $n+1\geq 8$},$$
where $b^i$ denote the Betti numbers over any field, $\mathcal{H}^{n-7}$ is the $(n-7)$-dimensional Hausdorff measure and $\Sing(\Sigma)$ is the singular set of $\Sigma$. In fact in dimension $n+1=3$, $C_A$ depends linearly on $A$. We list some open problems at the end of the paper.

\end{abstract}

\section*{Introduction}

In this paper we study families of closed embedded minimal hypersurfaces with a uniform bound on their $n$-volume but arbitrarily high Morse index. For reasons that will be explained later, we are interested in bounding the geometric complexity of a minimal hypersurface from above by the index. 
%Our arguments are based on covering and counting arguments which are not so specific to minimal hypersurfaces and may apply to other variational objects.

Let $(M^{n+1},g)$ be a closed Riemannian $n$-manifold. Our main results are divided into two cases: the low dimensional case $3\leq n+1 \leq 7$ and the higher dimensional case $n+1\geq 8$. In the following result, the area of a hypersurface in $M$ means its $n$-dimensional volume. We fix a field $F$ and denote by $b^i(.) = b^i(.,F)$ the dimension over $F$ of the cohomology groups $H^i(.,F)$ (in the statement $b^i(.)$ can in fact be replaced by the dimensions $b_i(.)$ for the homology groups $H_i(.,F)$). 

\begin{theo}\label{maina}
Let $(M,g)$ be a closed $(n+1)$-dimensional Riemannian manifold, with $2\leq n\leq 6$. For any $A>0$, there is a constant $C_A=C_A(M,g,A)$ depending only on the metric and $A$, such that for any smoothly embedded closed  minimal hypersurface $\Sigma\subset M$ of area at most $A$, 
$$ \sum_{i=0}^n b^i(\Sigma) \leq C_A (1+\ind(\Sigma)).$$
In dimension $n+1=3$, $C_A$ depends linearly on $A$, i.e.  $C_A\leq C.A$ for some constant $C$ depending only on the metric $g$.
\end{theo}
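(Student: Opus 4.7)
The plan is to combine the Schoen--Simon curvature estimate with a covering argument that separates \emph{stable} from \emph{unstable} regions of $\Sigma$, and then sum up local Betti number bounds via a Mayer--Vietoris type inequality.

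First, I fix a scale $r>0$, depending only on $(M,g)$, small enough that geodesic balls $B_r(x)$ are essentially Euclidean, so that the monotonicity formula gives $\mathcal{H}^n(\Sigma\cap B_r(x))\leq CA$ for every $x\in M$, with $C=C(M,g)$. I then cover $M$ by finitely many balls $B_r(x_1),\dots,B_r(x_N)$ of bounded overlap (Besicovitch), with $N=N(M,g)$. Each ball is classified as \emph{stable} or \emph{unstable} depending on whether the Jacobi quadratic form of $\Sigma$ is nonnegative on smooth functions supported in $B_r(x_i)\cap\Sigma$.

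Second, on a stable ball the Schoen--Simon estimate (valid exactly because $n+1\leq 7$) produces a uniform bound $|A_\Sigma|\leq C/r$ on the shrunken ball $B_{r/2}(x_i)$, and then monotonicity plus the area bound force $\Sigma\cap B_{r/2}(x_i)$ to be a union of at most $CA/r^n$ graphical sheets, each one a topological disk. Hence the total Betti number contribution of each stable piece is at most $CA$.

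Third, and this is the combinatorial heart of the argument, I bound the number of unstable balls by the index. An unstable ball carries a compactly supported test function on which the second variation is negative; a maximal subcollection of pairwise disjoint unstable balls therefore has cardinality at most $\ind(\Sigma)$, and the bounded-overlap property of the cover upgrades this to $\#\{\text{unstable balls}\}\leq C(M,g)\,\ind(\Sigma)$. Each unstable piece still has area $\leq CA$, hence at most $CA/r^n$ sheets, and after refining the cover at a controlled smaller scale one can bound its total Betti number by $CA$ as well.

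Finally, the nerve/Mayer--Vietoris inequality
$$\sum_{i=0}^n b^i(\Sigma)\;\leq\;\sum_\alpha \sum_{i=0}^n b^i\bigl(\Sigma\cap B_\alpha\bigr),$$
where $\alpha$ ranges over (finite) intersections of cover elements whose number is controlled by the bounded multiplicity, combines the local bounds. The stable balls contribute at most $C(M,g)\cdot CA$, and the unstable balls contribute at most $C(M,g)\,\ind(\Sigma)\cdot CA$, yielding $\sum b^i(\Sigma)\leq C_A(1+\ind(\Sigma))$ with $C_A\leq C\cdot A$ when $n+1=3$. The main obstacle I expect is the combinatorial step: one has to choose the cover so that unstable balls can be grouped into $O(\ind(\Sigma))$ clusters without generating spurious topology in the Mayer--Vietoris bookkeeping, and so that the local topology on unstable balls really is controlled purely by area and not by the local index; this is precisely where a genuinely combinatorial (as opposed to purely analytic) refinement of the covering seems necessary, and where the author's new argument must be doing the work.
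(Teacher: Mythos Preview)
Your proposal has a genuine gap at the step you yourself flag. The claim that on an \emph{unstable} ball $B_r(x_i)$ the total Betti number of $\Sigma\cap B_r(x_i)$ is bounded by $CA$ is not justified, and in fact is false at a fixed scale: a single unstable ball could contain essentially all the topology of $\Sigma$ (imagine high genus concentrated at very small scales). Saying ``refine the cover at a controlled smaller scale'' begs the question, because nothing controls how small you must go, and at each refined scale you again have unstable pieces; iterating this recursion is exactly the hard part.

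The paper's approach avoids this by abandoning the fixed scale $r$ entirely. Instead it covers $\Sigma$ by balls $B(x,s_1(x))$ of \emph{variable} radius, where $s_1(x)$ is the \emph{stability radius}: the largest $r$ such that $\Sigma$ is stable in $B(x,\lambda r)$. By construction every ball in this cover has $\Sigma$ stable on a larger concentric ball, so Schoen--Simon applies everywhere and each $\Sigma\cap B(x,s_1(x))$ is a union of at most $CA$ disks. The entire difficulty is transferred to bounding the \emph{number} of balls in this cover by $C(\ind(\Sigma)+1)$. This is nontrivial because the balls have uncontrolled relative sizes: a naive disjointness argument fails since many balls at different scales can have their $\lambda$-enlargements all intersecting (curvature concentrating at a point, $\Sigma$ nearly conical). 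The paper handles this by first excising an ``almost conical region'' $\mathfrak{C}$ where such concentration occurs, showing $\Sigma\cap\mathfrak{C}$ is topologically a product, and then running a tree-type counting argument (Propositions~\ref{triplybis}, \ref{doublybis}) on the cover of $\Sigma\setminus\mathfrak{C}$ to produce roughly as many \emph{disjoint} unstable regions as there are balls. The linear dependence on $A$ in dimension $3$ comes from replacing Schoen--Simon (which needs an area bound) by Schoen's area-free curvature estimate and the Meeks--P\'erez--Ros removable singularity theorem; your outline does not isolate why dimension $3$ is special.
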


In our next main theorem, we say that $\Sigma$ is a closed minimal hypersurface smooth outside a singular set of Hausdorff dimension at most $n-7$ if $\Sigma$ is a stationary multiplicity one integral $n$-varifold whose support is smoothly embedded outside a set of dimension at most $n-7$. This assumption is natural in view of the fact that area minimizing hypersurfaces have this regularity, as well as minimal hypersurfaces produced by min-max theories. The $n$-mass of this varifold $\Sigma$ is called area. Its Morse index is defined to be the index of its regular part.

\begin{theo}\label{mainb}
Let $(M,g)$ be a closed $(n+1)$-dimensional Riemannian manifold, with $ n \geq 7$. For any $A>0$, there is a constant $C_A=C_A(M,g,A)$ depending only on the metric and $A$, such that for any closed embedded minimal hypersurface $\Sigma\subset M$ smooth outside a singular set $\Sing(\Sigma)$ of Hausdorff dimension at most $n-7$ and of area at most $A$, 
$$ \mathcal{H}^{n-7}(\Sing(\Sigma)) \leq C_A  (1+\ind(\Sigma))^{7/n}.$$
\end{theo}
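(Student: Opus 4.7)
The plan is to convert the Morse index bound into a covering of $\Sing(\Sigma)$ at a well-chosen scale, and then invoke Schoen--Simon regularity to estimate $\mathcal{H}^{n-7}(\Sing(\Sigma))$ on each piece of the cover.

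First, I would establish a quantitative Schoen--Simon estimate: there exist constants $C_0, r_0 > 0$, depending only on $(M,g,A)$, such that whenever $\Sigma$ is stable inside a ball $B_r(x)$ with $r \leq r_0$, one has
\[
\mathcal{H}^{n-7}\bigl(\Sing(\Sigma) \cap B_{r/2}(x)\bigr) \leq C_0\, r^{n-7}.
\]
I expect this to follow by a contradiction/rescaling argument: if the estimate failed at ever smaller scales, rescaling a sequence of counterexamples by $r_j \to 0$ would, via Schoen--Simon compactness, produce a stable minimal integral varifold in Euclidean space whose singular set violates the Schoen--Simon dimension bound $n-7$.

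The combinatorial heart of the argument comes next. At a scale $r \leq r_0$ to be chosen, use a Vitali-type selection to obtain a disjoint collection of balls $\{B_r(x_i)\}_{i\in I}$ with $x_i \in \Sing(\Sigma)$ whose $5$-fold enlargements cover $\Sing(\Sigma)$. The monotonicity formula, together with the total area bound $A$, forces $|I| \leq C A / r^n$. Since disjoint pieces on which $\Sigma$ is unstable support mutually orthogonal negative directions of the second variation, the number of balls in which $\Sigma$ fails to be stable is bounded by $\ind(\Sigma)$; the remaining balls are stable and are controlled by the first step. The delicate point is that the unstable balls carry no a priori bound on the singular mass inside: one must iterate the selection at finer scales within each unstable ball --- or use a stopping-time scheme assigning to each singular point the largest radius at which stability holds --- so that the unstable contributions across all levels form a convergent geometric series dominated by $\ind(\Sigma)$.

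Finally, optimize by choosing $r^n \sim A/(1+\ind(\Sigma))$, so that $|I|$ is of order $1+\ind(\Sigma)$. Summing the quantitative Schoen--Simon estimate over the stable balls and absorbing the unstable contributions gives
\[
\mathcal{H}^{n-7}(\Sing(\Sigma)) \leq C\,(1+\ind(\Sigma))\cdot r^{n-7} \leq C'\, A^{(n-7)/n}\,(1+\ind(\Sigma))^{7/n},
\]
which is exactly the asserted bound. The main obstacle, and the place where the paper's new combinatorial argument must do the real work, is implementing the covering with the correct disjointness and index accounting --- ensuring that the global index of $\Sigma$ genuinely dominates the number of unstable pieces at each relevant scale, and that the unstable balls can be iteratively absorbed without destroying the $7/n$ exponent.
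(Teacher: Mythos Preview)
Your scheme has two genuine gaps.

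\textbf{First, the quantitative $\mathcal{H}^{n-7}$ estimate.} The soft contradiction you sketch does not work: $\mathcal{H}^{n-7}$ of the singular set is not lower semicontinuous under varifold convergence, so a blow-up limit with singular set of dimension $\leq n-7$ does not contradict $\mathcal{H}^{n-7}(\Sing(\Sigma_j)\cap B_{1/2})\to\infty$. The bound $\mathcal{H}^{n-7}(\Sing\cap B_{r/2})\leq C_0 r^{n-7}$ for stable $\Sigma$ in $B_r$ is exactly the content of Naber--Valtorta, and the paper simply invokes that result.

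\textbf{Second, the case $n=7$.} Your iteration collapses there. At level $j$ you cover the residual singular set inside the $\leq \ind(\Sigma)$ unstable balls from level $j-1$ by $\lesssim \ind(\Sigma)$ balls of radius $r_j$, and the stable ones contribute $\lesssim \ind(\Sigma)\cdot r_j^{\,n-7}$. Summing over $j$ gives a geometric series in $r_j^{\,n-7}$, which converges only when $n-7\geq 1$. For $n=7$ each level contributes $\sim\ind(\Sigma)$ and the sum diverges; nothing in your outline bounds the number of levels in terms of $(M,g,A,\ind(\Sigma))$. This is precisely the ``curvature concentration at a point'' obstruction described in the paper: an unstable ball may contain a single much smaller unstable ball, which contains a single still smaller one, and so on. The paper handles $n=7$ by an entirely separate argument: it removes an \emph{almost conical region} $\mathfrak{C}$ (where such one-child chains necessarily live, by a quantitative monotonicity/rigidity step), and then shows that outside $\mathfrak{C}$ the covering tree genuinely branches, so the number of vertices is comparable to the number of leaves, which are disjoint unstable balls and hence number at most $\ind(\Sigma)$.

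\textbf{For $n\geq 8$ your route is essentially correct and is a legitimate alternative to the paper's.} The paper instead covers $\Sing(\Sigma)$ directly by balls $B(x,s_1(x))$ at the stability radius, prunes one-child chains (harmless for $\sum\rad^{n-7}$ by the same geometric series), builds a tree whose leaves are disjoint unstable balls, and finishes with H\"older:
\[
\sum \rad(b)^{n-7}\leq\Big(\sum 1\Big)^{7/n}\Big(\sum \rad(b)^n\Big)^{(n-7)/n}\lesssim (1+\ind(\Sigma))^{7/n}A^{(n-7)/n}.
\]
Your fixed-initial-scale-plus-iteration and the paper's stability-radius-plus-H\"older are two packagings of the same convergence $\sum_j r_j^{\,n-7}<\infty$; either yields the theorem for $n\geq 8$. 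But you must separate out $n=7$ and supply the branching argument there.
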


In both theorems, we expect our inequalities to be sharp up to the non-explicit factor $C_A$ (it is indeed the case in dimension $n+1=3$). In dimension $n+1=3$, we conjecture a better general bound when the area is unbounded (see Section \ref{open}, Conjecture $\mathbf{C}_1$). For examples of infinite sequences of minimal surfaces with uniformly bounded area in the round $3$-sphere, see for instance \cite{Lawson, KapouleasYang, Kapouleasdoubling, Wiygulstacking, ChoeSoret, Ketoverequivariant, KeMaNe, KapouleasWiygulClifford, KapouleasMcGrathdoubling} (see also \cite{Kapouleas1, Kapouleas2}). In the statement of Theorem \ref{mainb}, the assumptions can probably be weakened by the regularity theory of N. Wickramasekera \cite{Wickramasekera}.

In dimensions $3\leq n+1\leq 7$, Theorem \ref{maina} can be thought of as a quantification of the following ``compactness'' results which hold under a uniform index bound assumption. R. Schoen and L. Simon \cite{SchoenSimon} proved that the set of stable minimal hypersurfaces (i.e. with Morse index $0$) with a uniform area bound is smoothly compact. This was extended by Sharp \cite{Sharp} to a varifold compactness result for any set of minimal hypersurfaces with uniform area and index bounds. By studying bounded index minimal hypersurfaces, H.-Z. Li and X. Zhou \cite{LiZhou} constructed general examples of sequence of unbounded index minimal hypersurfaces (see also \cite{Carlottolargeindex,Aiexnon}). O. Chodosh, D. Ketover and D. Maximo \cite{ChodKetMax} described the degenerations occurring in the compactness result of \cite{Sharp} and among other things showed that any space of minimal hypersurfaces with uniformly bounded area and index contains only finitely many diffeomorphism types (in particular the total Betti number is bounded). Another related article is A. Carlotto \cite{Carlottofinite}. Similar finiteness results for the number of diffeomorphism types, the total Betti number and the total curvature were obtained by R. Buzano and B. Sharp \cite{BuzSha} (see also \cite{AmbBuzCarSha}). In \cite{Maximonote}, D. Maximo proved Theorem \ref{maina} in dimension $3$, in the special case where the index is uniformly bounded. 

On the other hand Theorem \ref{mainb} quantifies the work of A. Naber and D. Valtorta \cite{NaberValtorta}, which bounds the size of the singular set of area minimizing hypersurfaces in dimensions $n+1\geq 8$.

One of our main motivations comes from the existence theory for minimal hypersurfaces. For simplicity let us focus on closed Riemannian $3$-manifolds. Many recent works revolving around S.-T. Yau's conjecture for minimal surfaces, which asks for the existence of infinitely many minimal surfaces in any closed $3$-manifold \cite{Yauproblemsection}, led to the realization that closed embedded minimal surfaces abound in closed $3$-manifolds. F.C. Marques and A. Neves \cite{MaNeinfinity} initiated a program to develop the min-max theory of Almgren-Pitts as an approach to the conjecture. Later K. Irie, F.C. Marques and A. Neves \cite{IrieMaNe}, O. Chodosh and C. Mantoulidis \cite{ChodMant}, X. Zhou \cite{Zhoumultiplicityone} proved the generic case of Yau's conjecture, and we settled the general case in \cite{AntoineYau}. One important aspect of min-max theory is that in a closed $3$-manifold, one can construct of a sequence $\{\Sigma_p\}_{p\in \mathbb{N}}$ of closed embedded minimal surfaces playing the role of non-linear geometric eigenfunctions. How geometrically complicated are these minimal hypersurfaces? As a Morse theoretic heuristic suggests, each $\Sigma_p$ has natural Morse index bounds \cite{MaNeindexbound, ChodMant, Zhoumultiplicityone}. It means that, in order to understand $\Sigma_p$, it is crucial to study the relation between the geometry of a minimal surface and its Morse index. The following conjecture of F.C. Marques, A. Neves and R. Schoen fits into that picture: if a closed manifold $(M^{n+1},g)$ has positive Ricci curvature, then any closed smoothly embedded minimal hypersurface $\Sigma^n \subset M$ should satisfy for some $C$ depending only on $(M,g)$:
\begin{equation} \label{conjMNS}
b^1(\Sigma) \leq C \ind(\Sigma)
\end{equation}
where $b^1(\Sigma)$ is the first Betti number over $\mathbb{R}$. In this paper, without assuming conditions on the ambient curvature, we find the first upper bound on the genus of these minimal surfaces $\Sigma_p$: since $\Sigma_p$ has area growing as $p^{1/3}$ and index at most $p$, we have as a corollary of Theorem \ref{maina} that for a constant $C=C(g)$,
$$\genus(\Sigma_p)\leq C p^{4/3}.$$
We conjecture that the optimal bound should be linear (see Section \ref{open},Conjecture $\mathbf{C}_1$). By \cite{Zhoumultiplicityone, MaNemultiplicityone} and \cite{EjiriMicallef}, it can be shown that generically
$$\genus(\Sigma_p) \geq C p.$$

%Chodosh-Mantoulidis \cite{ChodMant} proved that generically $$\genus(\Sigma'_p) \geq C p$$ where $\{\Sigma'_p\}$ is the analogous sequence of min-max minimal surfaces produced with the Allen-Cahn min-max theory.

Another motivation is to better understand families of minimal surfaces in $3$-manifolds which have bounded area but unbounded genus and index. For bounded index minimal surfaces with possibly unbounded area, we have already mentioned \cite{LiZhou,ChodKetMax,Carlottofinite,Maximonote}. The case where the area can be unbounded but the genus is uniformly bounded has been extensively studied by T. H. Colding and W. P. Minicozzi in a series of papers that give an essentially complete picture of what happens under these assumptions \cite{C&Mboundedgenus1,C&Mboundedgenus2,C&Mboundedgenus3,C&Mboundedgenus4,C&Mboundedgenus5}. For earlier work when $\Ric>0$, see  H. I. Choi-Schoen \cite{ChoiSchoen}. On the other hand, when the area is bounded but not the genus or the index, the situation remains foggy. The quantified results we propose here imply that genus and index are actually equivalent in this setting. Indeed as a corollary of Theorem \ref{maina} and \cite{EjiriMicallef}, we obtain:
\begin{coro} \label{genusindex}
Let $(M^3,g)$ be a closed $3$-manifold. For any $A>0$, there is a constant $C_A=C_A(M,g,A)>0$ such that for any closed embedded minimal surface $\Sigma$ of area at most $A$:
$$C_A^{-1}(\genus(\Sigma)+1)\leq \ind(\Sigma)+1\leq C_A (\genus(\Sigma)+1)$$
\end{coro}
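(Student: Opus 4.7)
The proof is immediate once one combines Theorem \ref{maina} in dimension $n+1=3$ with an upper bound on the Morse index due to Ejiri and Micallef, so I would treat the two inequalities independently.

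For the right-hand inequality $\ind(\Sigma)+1 \leq C_A(\genus(\Sigma)+1)$, the plan is to invoke \cite{EjiriMicallef}, which establishes that for a closed embedded minimal surface $\Sigma$ in a Riemannian $3$-manifold $(M,g)$ the Morse index is controlled linearly in terms of area and genus, roughly of the form
\[\ind(\Sigma) \leq C(g)\bigl(\area(\Sigma)+\genus(\Sigma)+1\bigr)\]
for a constant $C(g)$ depending only on $g$. Feeding in the area bound $\area(\Sigma) \leq A$ immediately yields the claimed inequality with $C_A$ depending only on $g$ and $A$.

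For the left-hand inequality $\genus(\Sigma)+1 \leq C_A(\ind(\Sigma)+1)$, the plan is to apply Theorem \ref{maina} with $n=2$, taking the field $F=\mathbb{F}_2$ in order to handle the orientable and non-orientable cases uniformly. This gives
\[\sum_{i=0}^{2} b^i(\Sigma;\mathbb{F}_2) \leq C_A\bigl(1+\ind(\Sigma)\bigr).\]
By the classification of closed surfaces, one has $b^1(\Sigma;\mathbb{F}_2) \geq \genus(\Sigma)-1$ regardless of orientability: for an orientable surface of genus $g$ one has $b^1(\mathbb{F}_2)=2g$, and for a non-orientable closed surface $\#_k\mathbb{RP}^2$ one has $b^1(\mathbb{F}_2)=k$. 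Combining this with the previous display produces the desired bound.

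There is essentially no technical obstacle here: all of the difficulty is packaged into Theorem \ref{maina} and the Ejiri--Micallef estimate \cite{EjiriMicallef}, and the remaining step is the elementary comparison between the total Betti number and the genus of a closed surface. The only minor point to be careful about is the orientability dichotomy, which is cleanly resolved by working with $\mathbb{F}_2$-coefficients.
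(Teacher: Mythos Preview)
Your proposal is correct and follows exactly the approach indicated in the paper, which simply states that the corollary follows from Theorem \ref{maina} combined with Ejiri--Micallef \cite{EjiriMicallef}. Your added care in working over $\mathbb{F}_2$ to handle non-orientable surfaces uniformly is a reasonable way to make the Betti-number-to-genus comparison explicit.
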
 This is consistent with the index computations by N. Kapouleas-D. Wiygul of some Lawson surfaces \cite{KapouleasWiygul}.

Let us give some further background on quantified index bounds. In dimension $n=2$, optimal \textit{upper} bounds for the index in terms of the area and genus for $2$-dimensional minimal surfaces have been proved by N. Ejiri-M. Micallef \cite{EjiriMicallef}: they showed that for any closed minimal surface $\Sigma$ in a $3$-manifold $(M^3,g)$, there is $C=C(g)$ so that $$\ind(\Sigma)\leq C(\area(\Sigma)+\genus(\Sigma)).$$ 
There are no such upper index bounds for higher dimensional minimal hypersurfaces as it is shown by the minimal $3$-spheres with bounded area and unbounded index constructed by W.-Y. Hsiang \cite{HsiangWY1,HsiangWY2}. 

When it comes to \textit{lower} index bounds, starting with an idea of A. Ros \cite{Rostrick} there have been many articles verifying the conjecture of Marques-Neves-Schoen (\ref{conjMNS}) mentioned earlier for ambient spaces $(M,g)$ carrying special metrics: see \cite{Rostrick,AmbCarShanote} for flat tori; A. Savo \cite{Savo} for round spheres; F. Urbano \cite{Urbanosecond} for $S^1\times S^2$; L. Ambrozio, A. Carlotto and B. Sharp \cite{ACS} for compact rank one symmetric spaces and more; C. Gorodski, {R. A. E.} Mendes and M. Radeschi \cite{GMR} etc. These papers are based on subtle refinements of Ros' method and consequently their results do not depend on an area upper bound but require the metric to be very symmetric. Note that for a general metric $g$, Theorem \ref{maina} is false if we remove the dependency of $C_A$ on the area bound $A$ (there are examples of sequence of stable minimal surfaces with unbounded genus, see Example 1.2 in \cite{MatthiesenSiffertsystole}). To improve the understanding of high index minimal hypersurfaces in general Riemannian manifolds and prove Theorem \ref{maina}, we will not use A. Ros' technique but instead we introduce a new quantified covering argument that we explain in Section \ref{first}. 
%Theorem \ref{maina} quantifies \cite{Sharp,LiZhou,ChodKetMax,Maximonote} whose arguments only apply to minimal hypersurfaces with uniform index bounds.

In high dimensions $n+1\geq 8$, we cannot get a similar bound for the total Betti number as in Theorem \ref{maina}, and as a matter of fact we conjecture that there should be counterexamples (see Section \ref{open}). Theorem \ref{mainb} is the first bound on the size of the singular set by the Morse index. 
%It can be interpreted as a quantification of \cite{NaberValtorta} which bounds the size of the singular set of area minimizing minimal hypersurfaces (see also \cite{Edelenminimizing} for another kind of bound on the singular set). 

Finally we mention that in Section \ref{reamrks}, we introduce a two-piece decomposition for any $2$-dimensional minimal surface $\Sigma$ embedded in a $3$-manifold. This decomposition divides $\Sigma$ into a non-sheeted part and a sheeted part; it is analogous to the thick-thin decomposition for hyperbolic manifolds or manifolds with bounded sectional curvature. We will see that the non-sheeted part has area and genus controlled by the Morse index of $\Sigma$, independently of the total area of $\Sigma$ which can be arbitrarily large.

\subsection*{Organization} In Section \ref{first}, we outline the combinatorial arguments used to prove the main theorems, and list the main constants used throughout the paper. Sections \ref{sectionbetti} and \ref{sectiongenus} are focused on the proof of Theorem \ref{maina}. In Section \ref{reamrks} we introduce a two-piece decomposition for minimal surfaces and we improve Theorem \ref{maina} in dimension $3$.  In Section \ref{undostres}, Theorem \ref{mainb} is proved. Finally in the last section we list some open problems related to the geometric complexities of minimal hypersurfaces.

\subsection*{Acknowledgement} 
I am grateful to Fernando Cod\'{a} Marques and Andr\'{e} Neves for their continued support. This work benefited from extended discussions with Jonathan J. Zhu. I thank Otis Chodosh, Chao Li, Davi Maximo, Brian White, Xin Zhou for interesting conversations, and Hans-Joachim Hein for explaining to me his unpublished work with Aaron Naber on certain constructions of K\"{a}hler-Einstein metrics. I also thank Giada Franz and  Santiago Cordero Misteli for correcting a mistake in Lemma \ref{fin}. I am indebted to the reviewers, whose constructive comments and numerous corrections substantially improved the writing of this article.

The author was partially supported by NSF-DMS-1509027. This research was partially conducted during the period the author served as a Clay Research Fellow.

\section{Outline of proofs} \label{first}
In this first section, we explain our method, which relies on a geometric covering argument. Covering arguments implying topological or geometric ``finiteness'' properties had already been used in Differential Geometry, some instances include Gromov \cite[Chapter 13]{BallGromSchr} \cite{GromovBetti}, Naber-Valtorta \cite{NaberValtorta}. 
%In fact, Theorem \ref{maina} really is the minimal surface version of the following result of Gromov \cite[Chapter 13]{BallGromSchr}:  let $(M^n,g)$ be a closed hyperbolic manifold, then there is a dimensional constant $C$ so that 
%$$ \sum_{i=0}^n b^i(M) \leq C \Vol_g(M).$$
In our case this covering is given by using estimates for stable minimal hypersurfaces. The novelty in our paper is to relate in a quantified way the number of balls in the covering to the Morse index while dealing with the issue that the relative sizes and positions of these balls are uncontrolled. Our arguments are based on counting arguments which are not specific to minimal hypersurfaces and may apply to other variational objects.

\subsection{Definition of the folding number}\label{folding} \label{fofo}

In the following, $(M^{n+1},g)$ is a closed $(n+1)$-dimensional Riemannian manifold. We introduce the following useful quantity:
%We use a parameter $\lambda>0$ that will be chosen later. Let $b:=B(p,r)$ be the extrinsic geodesic ball in $(M,g)$ centered at $p\in \Sigma$, with radius $r>0$. We denote by $\lambda b$ the extrinsic geodesic ball $B(p,\lambda r)$.
%Let $\Sigma\subset M$ be an embedded closed minimal surface.

\begin{definition} 
Let $\Sigma$ be a closed embedded minimal hypersurface in $M$. The folding number of $\Sigma$, denoted by $\mathbf{f}(\Sigma)$, is the maximal number of disjoint open subsets $b\subset \Sigma$ of $\Sigma$ such that each $b$ is unstable. If $\Sigma$ is stable, we define by convention $\mathbf{f}(\Sigma)=0$.
\end{definition}
Note the following simple inequality 
$$\mathbf{f}(\Sigma)\leq \ind(\Sigma)$$
(see for instance \cite[Lemma 3.1]{Sharp}).
We chose this name to suggest that in lower dimensions, unstable minimal surfaces are generally folded while stable minimal surfaces are not folded because of the curvature estimates of Schoen \cite{Sc}, Schoen-Simon-Yau \cite{SSY}, Schoen-Simon \cite{SchoenSimon}. 
%There are many ways to get more quantified and general variations of the previous definition for an arbitrary immersed hypersurface.
%(see the definition of $r_1$ in the next subsection for an example).

\subsection{Outline of proof of Theorem \ref{maina}}

There is a standard topological lemma which enables us to estimate the Betti numbers of a manifolds once we are given a cover by open sets whose finite intersections are topologically simple,  see Lemma \ref{lemmetopo} in the Appendix. It states that if we can find a family of $m$ open subsets $\{U_i\}_{i=1}^m$ with uniformly bounded ``overlap'', such that finite intersections have uniformly bounded Betti numbers then the Betti numbers of the manifold are bounded by a linear function in the number $m$ of open sets.

Fix $(M,g)$ of dimension $3$ to $7$, and let $\bar{r}\ll \injrad_M$,  $\lambda\gg1$. Let $\Sigma$ be a smoothly embedded closed minimal hypersurface in $(M,g)$ with $n$-volume at most $A$. We define the stability radius of $\Sigma$ as follows: for all $x\in M$, set
$$\mathbf{r}_{\mathrm{stab}}(x):=\sup\{r\leq\bar{r} ; B(x,\lambda r)\cap \Sigma \text{ is stable}\}.$$
The stability radius is in fact equivalent to a curvature radius in the sense that if $\mathbf{r}_{\mathrm{stab}}$ is small then the curvature of $\Sigma$ is large somewhere nearby, and conversely if $\mathbf{r}_{\mathrm{stab}}$ is large then the curvature of $\Sigma$ is small.

The strategy to prove Theorem \ref{maina} is the following. We cover $\Sigma$ with a finite number of open  geodesic balls $\{B_i\}_{i=1}^m$ of $(M,g)$ of the form $B(x, \mathbf{r}_{\mathrm{stab}}(x))$. If $\bar{r}$, $\frac{1}{\lambda}$ were chosen small enough, by well-known curvature bounds \cite{Sc,SchoenSimon}, each $B_i\cap \Sigma$ is a collection of almost flat $n$-disks (after rescaling). The number of such disks in a given $B_i$ is essentially bounded by the area of $\Sigma$ thanks to the monotonicity formula. Moreover we can guarantee using a Besicovitch covering argument that the overlap between the balls $B_i$ is bounded by a dimensional constant. Then applying the standard topological lemma recalled above to the cover of $\Sigma$ given by the connected components of $\Sigma \cap B_i$ ($i \in \{1,...,m\}$), the total Betti number of $\Sigma$ satisfies
\begin{equation} \label{apppli}
\sum_{i=0}^n b^i(\Sigma) \leq C m \area(\Sigma)
\end{equation}
where $C$ is a constant depending only on $(M,g)$ and $m$ is the number of balls $B_i$. Suppose for the sake of discussion that the radii of those balls $B_i$ are all strictly less than $\bar{r}$. In order to relate that inequality to the Morse index of $\Sigma$ we would like to construct another collection of balls $\{\tilde{B}_j\}_{j=1}^{\tilde{m}}$, each of the form $B(x, \mathbf{r}_{\mathrm{stab}}(x))$ with $\mathbf{r}_{\mathrm{stab}}(x)< \bar{r}$, such that
$$\forall i\neq j \in\{1,...,\tilde{m}\}, \quad \dist_g(\lambda \tilde{B}_i, \lambda \tilde{B}_j) >0,$$
$$ m \leq C_A\tilde{m},$$
where $C_A>0$ is a constant depending on $(M,g)$ and the area bound $A$. Indeed if such balls $\tilde{B}_j$ were found, since $\Sigma$ is unstable in each $s\lambda \tilde{B}_i$ whenever $s>1$, we would deduce
$$ m \leq  C_A\tilde{m} \leq C_A \mathbf{f}(\Sigma) \leq C_A \ind(\Sigma)$$
which, combined with (\ref{apppli}) would conclude the proof at least when $n\neq 2$.

To construct those balls $\tilde{B}_j$ we can try to extract a subfamily of balls in $\{B_i\}_{i=1}^m$ whose elements are far enough from each other. The difficulty lies in the fact that those balls $B_i $ have wildly different sizes in general. For simplicity, let us assume here that the radii $\rad(B_i)$ of $B_i$ are of the form $2^{-k_i}$ for some positive integer $k_i$. By extracting a subfamily of $\{B_i\}_{i=1}^m$, we can also easily assume that for each fixed $k$ the balls $3\lambda B_i$ which have radius $3\lambda 2^{-k}$ are at positive distance from one another. Now suppose that we can find an integer $J=J(M,g,A)$ such that $\{B_i\}_{i=1}^m$ satisfies the following \emph{``Property [J]''} (see Subsection \ref{combinatorial} for the rigorous definition): for each ${i_0} \in \{1,...,m\}$,
\begin{itemize}
\item either there are at most $J$ other balls $B_{i'}$  of  radii  at most $\rad(B_{i_0})$, such that
\begin{equation} \label{onepiece}
3\lambda B_{i'} \cap \lambda B_{i_0} \neq \varnothing,
\end{equation}
\item or there are $j_1,j_2 \in \{1,..., m\}$ such that $\rad(B_{j_1})=\rad(B_{j_2})$ and
$$3\lambda B_{j_1} \cap 3\lambda B_{j_2}  = \varnothing, \quad  3\lambda B_{j_1} \cup 3\lambda B_{j_2} \subset 2\lambda B_{i_0},$$
and there are at most $J$ other balls $B_{i'}$  of radii at least $\rad(B_{j_1})$, such that
\begin{equation} \label{twopiece}
3\lambda B_{i'}  \cap (3\lambda B_{j_1} \cup 3\lambda B_{j_2}) \neq \varnothing.
\end{equation}
\end{itemize}
Then we consider the balls $B_{i_0}$ of largest radius $2^{-1}$. Given such $B_{i_0}$, if the first possibility above occurs we keep $B_{i_0}$ but discard from our family $\{B_i\}_{i=1}^m$ the other balls $B_{i'}$ of radii at most $\rad(B_{i_0})$, satisfying (\ref{onepiece}); if the second possibility above occurs, we discard $B_{i_0}$ and all the balls $B_{i'}$ of radii at least $\rad(B_{j_1})$ satisfying (\ref{twopiece}), 
except the two balls $B_{j_1}, B_{j_2}$ themselves. By a top-down inductive replacement, we continue this procedure for each scale $2^{-k}$. After finitely many steps, the procedure ends and we are left with a subfamily of balls $B_{i_1},...,B_{i_L}$ such that 
$$\forall k\neq k' \in \{1,...,L\}, \quad \dist_g(\lambda B_{i_k}, \lambda B_{i_{k'}}) >0,$$
so that in particular
$$L \leq \mathbf{f}(\Sigma)\leq \ind(\Sigma).$$
On the other hand, we will prove in Proposition \ref{doublybis} that by construction of the subfamily, there is a constant $C_A$ depending on $(M,g)$ and $A$ but  independent of $\Sigma$ such that 
\begin{equation} \label{soccl}
m \leq C_A L.
\end{equation}
Roughly speaking, Proposition \ref{doublybis} is proved based on a \emph{``tree argument''}: alongside the inductive procedure we can construct a tree whose vertices are some balls of $\{B_i\}_{i=1}^m$, such that if $B_{i_0}$ is a vertex, it has no children if the first possibility of Property [J] occurs, and it has two children vertices if the second possibility occurs (corresponding to $B_{j_1}$ and $B_{j_2}$).  The leaves of this tree correspond to the balls $B_{i_1},...,B_{i_L}$ we were left with at the end of the inductive procedure. The number $L$ of leaves is comparable to the total number of vertices, which in turn is comparable to $m$. This concludes the proof, assuming Property P[J] is true. 

There are several issues with this outline. First, in the second alternative of Property P[J], in general we cannot find the balls $B_{j_1}$, $B_{j_2}$ in the original family $\{B_i\}_{i=1}^m$. It means that at the end of the inductive construction, we are left with new balls not necessarily contained in $\{B_i\}_{i=1}^m$. The second and more serious difficulty is that, even with that more flexible definition of Property P[J], \emph{it cannot be always true}. To see why, let us consider the following toy case. Suppose that inside $B(p,r)\backslash B(p,\epsilon)$, the minimal hypersurface $\Sigma $ is  smoothly close to a minimal hyperplane with integer multiplicity going through $p$ or more generally a smooth minimal cone with tip $p$, where $\epsilon>0$ is tiny, and $\Sigma$ has very large second fundamental form somewhere inside $B(p,\epsilon)$. Then it could be that there are an arbitrarily  large number of balls $\{b_1,...,b_{R}\}$ in the covering $\{B_i\}_{i=1}^m$ contained in $B(p,r)$ with radii roughly proportional to their distances to the tip $p$ but for any $i,j\in\{1,...,R\}$, 
$$\lambda b_i\cap \lambda b_j \neq \varnothing.$$ In that case we would not be able to find two disjoint balls of the form $\lambda B_i$ inside $B(p,r)$ and Property P[J] fails for any $J$ fixed a priori. In other words, there is possibly an issue whenever $\Sigma$ looks like a smooth cone, with large second fundamental form near the tip. We will see in Theorem \ref{noconcentration} and Proposition \ref{triplybis} using the monotonicity formula that \emph{conversely this is essentially the only situation where Property P[J] cannot hold}. Now the idea is to construct an open set of $M$ called \emph{``almost conical region $\mathfrak{C}$''} such that $\mathfrak{C} \cap \Sigma$ contains all the pieces of $\Sigma$ where it looks like a minimal cone with large second fundamental form near its tip. We can then remove that region from $\Sigma$, prove Property P[J] for $\Sigma \backslash \mathfrak{C}$ instead of $\Sigma$ and apply the outline of proof. This shows the desired control for the total Betti number of $\Sigma \backslash \mathfrak{C}$. Since $\mathfrak{C} \cap \Sigma$ is a union of cone-like regions diffeomorphic to some $X\times (0,1)$, with topology controlled by $(M,g)$ and $A$, the total Betti number of $\Sigma$ is equally well bounded, see Subsection \ref{splash}.
The almost conical region $\mathfrak{C}$ is constructed and analyzed in Subsection \ref{almostflat2}. This ends the sketch of proof for Theorem \ref{maina} when $n\neq 2$.

 \vspace{1em}

In dimension $n+1=3$, we can get better estimates for the constant $C_A$ in (\ref{soccl}) by showing that \emph{it depends linearly on $A$}. It comes from the fact that there are curvature estimates for stable surfaces independent of the area due to Schoen \cite{Sc}. We will also use the removable singularity theorem of Meeks-P\'{e}rez-Ros for minimal laminations \cite{MPR}. Their result applies to minimal surfaces without area bound, but with second fondamental form concentrating at a point, and roughly speaking it states that near this point but not too close, the minimal surface is almost flat after rescaling, in particular it is a union of disks and annuli. The region $\mathfrak{C}$ in dimension $3$ will be constructed so that $ \mathfrak{C}\subset \Sigma$ contains the union of these almost flat (after rescaling) regions in $\Sigma$. As in higher dimensions, if we start with a cover $\{B_i\}$ of $\Sigma\backslash \mathfrak{C}$ instead of $\Sigma$, then we avoid the issue of concentration of curvature at one point and we can finish the proof, see Section \ref{sectiongenus}. Compared to the case $3\leq n+1\leq 7$,  \cite{MPR}  replaces the monotonicity formula, but the proofs of Property P[J] and of the counting argument are essentially the same as in higher dimensions. 

Even when there is no area bound assumption on the minimal surface $\Sigma\subset (M,g)$, given a threshold number $N_0$, it is possible in dimension $3$ to define a natural \emph{two-piece decomposition}
$$\Sigma = \Sigma_{\leq N_0} \sqcup \Sigma_{>N_0}$$
where $\Sigma_{\leq N_0}$ is the part of $\Sigma$ where the rescaled area on scale of the stability radius $\mathbf{r}_{\mathrm{stab}}$ is at most $N_0$ (the non-sheeted part). The previous combinatorial arguments in fact automatically apply to $\Sigma_{\leq N_0}$, whose area and genus can be estimated in terms of the Morse index, see Section \ref{reamrks}.

 \vspace{1em}

 \vspace{1em}

\subsection{Outline of proof of Theorem \ref{mainb}} 

The proof is divided into two cases: when $n+1=8$ and when $n+1\geq 9$. 

In the former case, the proof uses the \emph{same} combinatorial argument as for Theorem \ref{maina}, we start with a cover of the (finite) singular set of $\Sigma$ by balls $\{B_i\}_{i=1}^K$ with bounded overlap and we try to find $K'$ disjoint balls of the form $\lambda B_i$ where $K'$ is larger than a definite fraction of $K$. Again we will need to first cut off an almost conical region $\mathfrak{C}$ from $\Sigma$.

In the case $n+1\geq 9$, the proof is in fact much easier due to the scaling properties of the size of the singular set, which now has positive dimension $n-7$. We do \emph{not} need to remove from $\Sigma$ an almost conical region. The proof is then based on the tree argument described earlier combined with the discrete H\"{o}lder's inequality. The reader might find it helpful to first understand the proof in dimensions $n+1\geq 9$ (Subsection \ref{plus grand que 9}) before going through the details of the rest of the paper.

 \vspace{1em}

\subsection{List of constants}
For the reader's convenience we list, for each section, the main constants or notations, the dependence between them and where they  are chosen.

\subsubsection{In Section \ref{sectionbetti}}
Let $(M,g)$ be a closed $(n+1)$-manifold with $n+1\leq 7$, $A>0$ and $\Sigma$ a closed embedded minimal hypersurface with $n$-volume at most $A$.

\begin{itemize}
\item $\mathbf{f}(\Sigma)$ is the folding number, it depends on $\Sigma \subset (M,g)$ and is defined in Subsection \ref{fofo},
\item $\bar{r}$, $\lambda$, $\varepsilon_{\mathrm{SS}}$ depend on $(M,g)$ and the area bound $A$; they are chosen in Subsections \ref{stabrad} and \ref{almostflat2}, in particular Lemma \ref{almostflatT},
\item $\mathbf{r}_{\mathrm{stab}}$ is the stability radius, it depends on $\bar{r}$, $\lambda$ and $\Sigma\subset (M,g)$, it is defined in Subsection \ref{stabrad},
\item $\bar{A}$ depends on $\bar{r}$, $A$,
 and is chosen in \ref{choiceofabar},
\item $\beta_0$ depends on $\bar{r}$, $\lambda$, $\bar{A}$, it is chosen in \ref{choiceofbeta0}, and is used to define the family of cones $\mathcal{G}_{\beta_0}$,
\item $\delta$, $K> 2^{1000}$ depend on $(M,g)$, $A$, $\bar{r}$, $\lambda$, $\varepsilon_{\mathrm{SS}}$, they are chosen in Lemma \ref{almostflatT} and they are used to define $(\delta,K)$-telescopes in \ref{defoftelescopes1},
\item $\mathfrak{C}$ is the almost conical region, which depends on $(M,g)$, $A$, $\bar{r}$, $\lambda$, $\varepsilon_{\mathrm{SS}}$, $\delta$, $K$ and is defined in Definition \ref{tuna1},
\item $\bar{K}:= 30 \lambda K^2$,
\item $\beta_1$, $\bar{R}$ depend on $\bar{A}$, $\lambda$, $\delta$, $K$ and are chosen in Theorem \ref{noconcentration},
\item $N(\Sigma)$ is the sheeting number, it depends on $\Sigma$, $\bar{r}$, $\lambda$ and is defined in Definition \ref{sheet},
\item $\mathcal{F}_k$ is the set of all geodesic balls $B(p,r)$ in $M$ such that $r\in [2^{-(k+1)},2^{-k})$.

\end{itemize}
\subsubsection{In Section \ref{sectiongenus}}

Let $(M,g)$ be a closed $3$-manifold and $\Sigma$ a closed embedded minimal surface.

\begin{itemize}
\item $\mathbf{f}(\Sigma)$ is the folding number, it depends on $\Sigma \subset (M,g)$ and is defined in Subsection \ref{fofo},
\item $\bar{r}$, $\lambda$, $\varepsilon_{\mathrm{S}}$ depend on $(M,g)$ only and are chosen in Subsection \ref{luyy}, in particular Lemma \ref{almostflat},
\item $\mathbf{r}_{\mathrm{stab}}$ is the stability radius, it depends on $\bar{r}$, $\lambda$ and $\Sigma\subset (M,g)$, it is defined in Subsection \ref{stabrad},
\item $\epsilon$, $K> 2^{1000}$ depend on $(M,g)$,  $\bar{r}$, $\lambda$, $\varepsilon_{\mathrm{S}}$, they are chosen in Lemma \ref{almostflat} and they are used to define $(\epsilon,K)$-telescopes in \ref{defoftelescopes2},
\item in dimension $3$, $\mathfrak{C}$ is the almost flat region, which depends on $(M,g)$, $\bar{r}$, $\lambda$, $\varepsilon_{\mathrm{S}}$, $\epsilon$, $K$ and is defined in Definition \ref{2020june},
\item $\bar{K}> 30 \lambda K^2$, $\beta_1$, $\bar{R}$ depend on $\lambda$, $\delta$, $K$ and are chosen in Theorem \ref{mpr},
\item $N(\Sigma)$ is the sheeting number, it depends on $\Sigma$, $\bar{r}$, $\lambda$ and is defined in Definition \ref{sheet},
\item $\mathcal{F}_k$ is the set of all geodesic balls $B(p,r)$ in $M$ such that $r\in [2^{-(k+1)},2^{-k})$.

\end{itemize}

\subsubsection{In Section \ref{undostres}}
Let $(M,g)$ be a closed $(n+1)$-manifold with $n+1\geq 8$, $A>0$ and $\Sigma$ a closed embedded minimal surface.

\begin{itemize}
\item $\mathbf{f}(\Sigma)$ is the folding number, it depends on $\Sigma \subset (M,g)$ and is defined in Subsection \ref{fofo},
\item $\lambda=2$, $\bar{r}$ depends on $(M,g)$ and is chosen in Subsection \ref{almostflat2}, in particular Lemma \ref{almostflatT},
\item $\mathbf{r}_{\mathrm{stab}}$ is the stability radius, it depends on $\bar{r}$,  and $\Sigma\subset (M,g)$, it is defined in Subsection \ref{stabrad},
\item $\bar{A}$ depends on $\bar{r}$, $A$,
 and is chosen in \ref{choiceofabar},
\item $\beta_0$ depends on $\bar{r}$, $\lambda$, $\bar{A}$, it is chosen in Lemma \ref{aroundtheworld}, and is used to define the family of cones $\mathcal{G}_{\beta_0}$,
\item $\delta$, $K> 2^{1000}$ depend on $(M,g)$, $A$, $\bar{r}$, they are chosen in Lemma \ref{almostflatT} and they are used to define $(\delta,K)$-telescopes in \ref{defoftelescopes1},
\item  $\mathfrak{C}$ is the almost conical region, which depends on $(M,g)$, $A$, $\bar{r}$, $\varepsilon_{\mathrm{SS}}$, $\epsilon$, $K$ and is defined in Definition \ref{tuna1},
\item $\bar{K}:= 60 K^2$,
\item $\beta_1$, $\bar{R}$ depend on $\bar{A}$, $\delta$, $K$ and are chosen in Theorem \ref{noconcentration8},
\item $N(\Sigma)$ is the sheeting number, it depends on $\Sigma$, $\bar{r}$ and is defined in Definition \ref{sheet},
\item $\mathcal{F}_k$ is the set of all geodesic balls $B(p,r)$ in $M$ such that $r\in [2^{-(k+1)},2^{-k})$.

\end{itemize}

\section{Total Betti number and Morse index in dimensions $3$ to $7$} \label{sectionbetti}

In this section, we prove Theorem \ref{maina} in dimensions $4$ to $7$. The arguments used work equally well for dimension $3$, but we will see in the next section that better estimates depending on the area bound $A$ can be obtained in that case.

\vspace{1em}

\subsection{The stability radius} \label{stabrad}
Let $(M,g)$ be a closed manifold of dimension $3\leq n+1\leq 7$.  Let $\bar{r}>0$, $\lambda> 1000$, $0<\varepsilon_{\mathrm{SS}}\ll 1$. We can assume $3\lambda \bar{r} <\injrad_M$.

We introduce a useful radius function associated to a closed immersed minimal hypersurface $\Sigma$ which is smooth (as all minimal hypersurfaces considered in this section). 

\begin{definition}
For all $x\in M$, set
$$\mathbf{r}_{\mathrm{stab}}(x):=\sup\{r\leq\bar{r} ; B(x,\lambda r)\cap \Sigma \text{ is stable}\}.$$
\end{definition}
In the above definition, $B(x,\lambda r)\cap \Sigma$ is said to be stable if empty. Sometimes, it will be convenient to allow $\bar{r}=\infty$ in this definition so that $\mathbf{r}_{\mathrm{stab}}$ can take values in $(0,\infty]$ (it will be mentioned whenever such a convention is made).

\begin{lemme} \label{continui}
Given $(M,g)$ and $\Sigma$ as above, the stability radius $\mathbf{r}_{\mathrm{stab}}:M\to (0,\bar{r}]$ is a $\frac{1}{\lambda}$-Lipschitz function; in particular it is a continuous function.
\end{lemme}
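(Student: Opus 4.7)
The plan is to establish continuity via two separate semi-continuity statements, using only the monotonicity of stability with respect to the domain together with the compact support of test functions witnessing instability. Throughout, let $L$ denote the Jacobi quadratic form on $\Sigma$; an open subset $U\subset\Sigma$ is stable exactly when $L(\phi,\phi)\geq 0$ for every $\phi\in C^1_c(U)$, and the key observation is that if $U_1\subset U_2$ are open and $U_2$ is stable, then so is $U_1$ (extend test functions by zero). I will first record as a preliminary that the sup in the definition of $s_1$ is attained: if $r_k\uparrow s_1(x)$ with each $B(x,\lambda r_k)\cap\Sigma$ stable, then any $\phi\in C^1_c(B(x,\lambda s_1(x))\cap\Sigma)$ has support inside some $B(x,\lambda r_k)\cap\Sigma$, and thus $L(\phi,\phi)\geq 0$. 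Hence the set of admissible radii at $x$ is exactly $[0,s_1(x)]$.

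For lower semi-continuity at $x_0$, fix any $r<s_1(x_0)$ and set $r':=\tfrac{r+s_1(x_0)}{2}\leq s_1(x_0)$, so that $B(x_0,\lambda r')\cap\Sigma$ is stable. Whenever $y\in M$ satisfies $d(y,x_0)<\lambda(r'-r)$ we have $B(y,\lambda r)\subset B(x_0,\lambda r')$ by the triangle inequality; consequently $B(y,\lambda r)\cap\Sigma$ is stable, which forces $s_1(y)\geq r$. This gives $\liminf_{y\to x_0}s_1(y)\geq s_1(x_0)$.

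For upper semi-continuity at $x_0$, fix any $r$ with $s_1(x_0)<r\leq \bar{r}$ (the case $s_1(x_0)=\bar{r}$ is automatic since $s_1\leq \bar{r}$). Because $r$ exceeds the stability radius, $B(x_0,\lambda r)\cap\Sigma$ is unstable, so there exists $\phi\in C^1_c(B(x_0,\lambda r)\cap\Sigma)$ with $L(\phi,\phi)<0$. Compactness of $\spt(\phi)$ produces $\delta>0$ such that $\spt(\phi)\subset B(x_0,\lambda r-\delta)$; for any $y$ with $d(y,x_0)<\delta$ the triangle inequality yields $\spt(\phi)\subset B(y,\lambda r)$, so the very same test function $\phi$ witnesses the instability of $B(y,\lambda r)\cap\Sigma$, hence $s_1(y)<r$. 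This gives $\limsup_{y\to x_0}s_1(y)\leq s_1(x_0)$, finishing the proof when combined with the previous step.

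No step here is genuinely hard; the only subtlety worth flagging is the need to know that the sup defining $s_1(x)$ is attained, which is what allows the upper semi-continuity argument to produce a compactly supported destabilizing $\phi$ that survives a small perturbation of the center $x_0$. Once that is in place the argument is purely an exercise with the triangle inequality and domain monotonicity of the Jacobi form.
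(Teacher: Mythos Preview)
Your proof is correct; the paper in fact states this lemma without proof, treating it as routine, and your argument via domain monotonicity of the Jacobi form together with compactness of the support of a destabilizing test function is exactly the standard one.

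One small remark: the preliminary observation that the supremum in the definition of $s_1$ is attained, while true, is not actually needed anywhere in your argument. For lower semi-continuity you only use that $r' < s_1(x_0)$ implies stability of $B(x_0,\lambda r')\cap\Sigma$, which follows from the definition of supremum and domain monotonicity without invoking attainment. For upper semi-continuity you only use that $r > s_1(x_0)$ implies instability of $B(x_0,\lambda r)\cap\Sigma$, which again follows directly since $r$ exceeds the supremum of admissible radii. So your final paragraph slightly misidentifies where the subtlety lies; the argument goes through even without the attainment claim.
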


Let $A>0$. The following is a consequence of the curvature bounds for stable minimal hypersurfaces due to Schoen-Simon \cite[Section 6, Corollary 1]{SchoenSimon}: if we suppose that $\lambda$ is large enough and $\lambda \bar{r}$ small enough (both depending on $(M,g)$, $A$ and $\varepsilon_{\mathrm{SS}}$) then for any closed minimal hypersurface $\Sigma$ of $n$-volume at most $A$, for any $x\in M$ and any $y\in \Sigma\cap B(x,2\mathbf{r}_{\mathrm{stab}}(x))$,
\begin{equation} \label{ssbound}
\mathbf{r}_{\mathrm{stab}}(x)|\mathbf{A}(y)|< \varepsilon_{\mathrm{SS}},
\end{equation}
where $|\mathbf{A}(y)|$ denotes the norm of the second fundamental form of $\Sigma$ at $y$.

We choose $\varepsilon_{\mathrm{SS}}$ small enough, and rescale again the metric so that for any embedded hypersurface $\Sigma$ and $x\in M$, if for some $r\leq \bar{r}$, for all $y\in \Sigma\cap B(x,2r)$, we have
$$r|\mathbf{A}(y)|\leq \varepsilon_{\mathrm{SS}},$$ then $r$ is less that the convexity radius of $\Sigma$ with the metric induced by $g$, and $\Sigma\cap B(x,r)$ (and also $\Sigma\cap B(x,2r)$) is a union of $n$-dimensional disks which are convex with respect to the induced metric on $\Sigma$. This is true because the second fundamental form of $\partial B(x,r)$ with respect to the metric $r^{-2}g$ becomes arbitrarily close to that of the unit round hypersphere as $g$ converges to the Euclidean metric, while with respect to $r^{-2}g$ the second fundamental form of the hypersurface satisfying $r|\mathbf{A}(y)|\leq \varepsilon_{\mathrm{SS}}$ is at most $\varepsilon_{\mathrm{SS}}$. 
In particular, for any points $x_1,...,x_J\in M$, the connected components of 
$$\Sigma \cap \bigcap_{j=1}^J B(x_j,\mathbf{r}_{\mathrm{stab}}(x_j))$$ are open convex sets diffeomorphic to $n$-disks.

Another important point due to Lemma \ref{continui} is that,
%, due to the curvature bounds of \cite{SchoenSimon}, given $y\in M$, 
given $y\in M$, the maximal number of disjoint balls of the form $ B(x,\mathbf{r}_{\mathrm{stab}}(x))$ intersecting $B(y,\mathbf{r}_{\mathrm{stab}}(y))$ non-trivially is bounded above by a constant 
\begin{equation}\label{muuu}
\text{ $\hat{\mu}= \hat{\mu}(M,g,\bar{r},\lambda)>0$ independent of $\Sigma$.}
\end{equation}
% whenever $\frac{1}{\lambda}, \bar{r}$ are small enough. 

There is a reverse inequality that we will also often use, and which is based on the fact that a minimal disk in the Euclidean $1$-ball with sufficiently small second fundamental form everywhere is necessarily stable. By definition of the stability radius, if for a point $x$ we have
$\mathbf{r}_{\mathrm{stab}}(x)  < \bar{r}$ then the ball $B(x, 2 \lambda \mathbf{r}_{\mathrm{stab}}(x))$ is unstable so if $\bar{r}$ was chosen small enough, for an $\varepsilon'>0$ depending on $(M,g), \bar{r}$, $\lambda$: 
\begin{equation}\label{reverse1}
\text{there is $y'\in B(x, 2 \lambda \mathbf{r}_{\mathrm{stab}}(x))$ such that }\mathbf{r}_{\mathrm{stab}}(x)|\mathbf{A}(y')|>\varepsilon'. 
\end{equation}

%All the previous rescalings of the metric $g$ are done without loss of generality, because they are independent of the minimal hypersurfaces studied.

By definition, clearly the maximal number of disjoint balls of the form $B(x,\lambda \mathbf{r}_{\mathrm{stab}}(x))$ with $x\in M$ and $\mathbf{r}_{\mathrm{stab}}(x)<\bar{r}$ at positive distance from one another is smaller than or equal to the folding number $\mathbf{f}(\Sigma)$ defined in the Subsection \ref{folding}. Defining $\mathbf{r}_{\mathrm{stab}}$ enables us to rigorously quantify the argument outlined in Section \ref{first}.

\vspace{1em}

\subsection{The almost conical region $\mathfrak{C}$} \label{almostflat2}
In this subsection we define the almost conical region $\mathfrak{C}$ and list some useful properties.

\subsubsection{Choice of the constant $\bar{A}$} \label{choiceofabar}

Given $A>0$, we can suppose (by rescaling $g$) that balls of radius $\bar{r}$ are sufficiently close to Euclidean so that by the monotonicity formula, for any ball $B(x,r)$ with $r\leq \bar{r}$, and any closed minimal hypersurface $\Sigma'$ of $n$-volume at most $A$, the $n$-volume of $\Sigma'\cap B(x,r)$ is at most $\bar{A}r^n/2$.

\subsubsection{The family of minimal cones $\mathcal{G}_{\beta_0}$} \label{choiceofbeta0}
Consider a complete Riemannian manifold $(M^{n+1},g)$ of dimension $3\leq n+1\leq 7$. If $\Sigma$ is a hypersurface, $p\in M$, $r>0$ denote by $ \Theta_g(p,r)$ the usual quantity
$$\frac{\text{$n$-volume of } B_g(p,r) \cap \Sigma}{\omega_n r^n}$$
where  the $n$-volume is computed with $g$ and $\omega_n$ is the Euclidean $n$-volume of the unit $n$-disk. We denote by $\Theta_{\text{Eucl}}(.,.)$ the analogue quantity for the Euclidean metric. For $s\leq t<\bar{r}$ and $p\in M$, let $A(p,s,t)$ be the open annulus $B(p,t)\backslash \overline{B}(p,s)\subset M$ where $\overline{B}$ is the closure of $B$.

Let $\bar{A}>0$ chosen as above (depending on the area bound $A$). Let $\Gamma$ be a minimal cone of $\mathbb{R}^{n+1}$
whose tip is $0\in\mathbb{R}^{n+1}$ and smoothly embedded outside $\{0\}$, such that $ \Theta_{\text{Eucl}}(0,1)\leq \bar{A}$. By the Frankel property and embeddedness, $\Gamma$ is connected. Let $|\mathbf{A}(.)|$ denote the norm of the second fundamental form of such a $\Gamma$ at a point. 
\vspace{1em}

In the following paragraph, we set $\bar{r}=\infty$ so that $\mathbf{r}_{\mathrm{stab}}$ can take values in $(0,\infty]$.
Let $\beta_0>0$ be large enough so that for $\Gamma$ as above, with
$$\max_{y\in \Gamma\cap \partial B_{\text{Eucl}}(0,1)} |\mathbf{A}(y)| \geq \beta_0,$$
there exist two balls $b_1,b_2\subset B_{\text{Eucl}}(0,2)\backslash B_{\text{Eucl}}(0,1/2)$ of the form 
$$b_1= B(x,\mathbf{r}_{\mathrm{stab}}(x)), \quad b_2= B( \frac{4x}{3},  \mathbf{r}_{\mathrm{stab}}(\frac{4x}{3}))$$
such that the following two conditions are satisfied:
$$4\lambda b_1\cap 4\lambda b_2 =\varnothing,$$ 
$$\mathbf{r}_{\mathrm{stab}}(x) < 2^{-1000}.$$
The constant $\beta_0$ depends on $\bar{A}$ and $\bar{r}$, $\lambda$, and its existence follows from the curvature bounds of Schoen-Simon: because of the density bound $\bar{A}$, if the curvature of $\Gamma$ is huge at some point $x$ of $\Gamma\cap \partial B_{\text{Eucl}}(0,1)$, then $\mathbf{r}_{\mathrm{stab}}(x)$ is tiny by Schoen-Simon, and we use $\mathbf{r}_{\mathrm{stab}}(\frac{4x}{3}) = \frac{4}{3}\mathbf{r}_{\mathrm{stab}}(x)$ to find $b_1,b_2$ as above.

We set
\begin{align*}
\mathcal{G}_{\beta_0} :=\{&\text{ minimal cones $\Gamma$ tipped at $0$ smooth outside of $0$} \\
& \text{ such that } \Theta_{\text{Eucl}}(0,1)\leq \bar{A} \text{ and }\max_{y\in \Gamma\cap \partial B_{\text{Eucl}}(0,1)} |\mathbf{A}(y)| \leq \beta_0\}.
 \end{align*}
By definition, cones of $\mathcal{G}_{\beta_0}$ have uniformly bounded second fundamental form and volume in the annulus $B_{\text{Eucl}}(0,1) \backslash B_{\text{Eucl}}(0,1/2)$. By the usual compactness theorems for minimal hypersurfaces with bounded second fundamental form and bounded volume, the set of diffeomorphism types of the cross-sections $\Gamma \cap\partial B_{\text{Eucl}}(0,1) $ for $\Gamma \in \mathcal{G}_{\beta_0}$ is finite. In particular, with respect to any field, the Betti numbers of such cross-sections is bounded by a number depending only on $\bar{A}$ and $\beta_0$. 
 
 %It can be checked that in dimension $\leq 7$,  for any $s\leq t$ fixed, $\{\Gamma \cap A(0,s,t); \Gamma\in \mathcal{G}_{\beta_0}\}$ is compact in $C^\infty(A(0,s,t))$.

\vspace{1em}

\subsubsection{Definition of pointed $\delta$-conical annuli} Let $\delta>0$.
For any bounded subset $\Omega$ of $\mathbb{R}^{n+1}$, we fix in what follows a metric on the set of smooth metrics on the closure of $\Omega$ reflecting the $C^5$ topology.

Recall that $\bar{r}<\injrad_M$ is a radius chosen so that all the analysis will be done below scale $\bar{r}$. 
Let $\Sigma\subset (M,g)$ be a closed embedded minimal hypersurface whose $n$-volume is at most $A$. 
For $s\leq \frac{1}{2}t<\frac{1}{2}\bar{r}$ and $p\in M$, we will say that $\Sigma \cap A(p,s,t)$ is \textit{$\delta$-close to a cone} $\Gamma \in \mathcal{G}_{\beta_0}$ if the following is true: there exists $\Gamma \in \mathcal{G}_{\beta_0}$ so that for all $r\in[s,t/2]$ and for each component $\Sigma_{0,r}$ of $\Sigma \cap A(p,r,2r)$, there is a diffeomorphism $\Phi:(B(p,2r), g/{r}{^2})  \to (B_{\text{Eucl}}(0,2), g_{\text{Eucl}})$ with 
\begin{enumerate} [label=(\roman*)]
\item $(\Phi^{-1})^*(g/{r}{^2})$ is $\delta$-close to $g_{\text{Eucl}}$ in the $C^5$-topology,
\item $\Phi(A(p,r,2r)) =A_{\text{Eucl}}(0,1,2)$,
\item $\Phi(\Sigma_{0,r}) =\Gamma \cap A_{\text{Eucl}}(0,1,2)$.
\end{enumerate}
We will say that $A(p,s,t)$ is a \textit{pointed $\delta$-conical annulus} if 
%$$\sup \{\dist_g(x, \partial B(p,s)) |\mathbf{A}(x)|; x\in \Sigma\cap B(p,s)\}\geq \epsilon$$
$$\mathbf{r}_{\mathrm{stab}}(p) < s \leq  \frac{1}{2}t<\frac{1}{2}\bar{r}$$
and if $\Sigma \cap A(p,s,t)$ is $\delta$-close to a cone $\Gamma \in \mathcal{G}_{\beta_0}$. 

If $\delta$ is small enough and if $A(p,s,t)$ is a pointed $\delta$-conical annulus, by embeddedness of $\Sigma$ and the Frankel property, 
the components of $\Sigma \cap \partial B(p,t)$ are diffeomorphic to the cross-section of a same cone in $\mathcal{G}_{\beta_0}$, and 
$\Sigma \cap A(p,s,t)$ is diffeomorphic to $\big(\Sigma \cap \partial B(p,t)\big) \times (0,1)$. The reverse inequality (\ref{reverse1}) also implies that there is a point in $B(p,2\lambda s)$ with second fundamental form of norm at least $\frac{\varepsilon'}{s}$. In fact if $\delta = \delta(\bar{A},\beta_0)$ is small, by the properties of $\mathbf{r}_{\mathrm{stab}}$ and the fact that cross-sections of cones in $\mathcal{G}_{\beta_0}$ have uniformly bounded second fundamental form, for any pointed $\delta$-conical annulus $A(p,s,t)$ and any $x$ such that
$$B(x, \mathbf{r}_{\mathrm{stab}}(x)) \cap \partial B(p,t) \neq \varnothing,$$
the stability radius is comparable to $t$:
\begin{equation} \label{review1}
C_{\beta_0}t \leq \mathbf{r}_{\mathrm{stab}}(x) \leq (1-\frac{1}{\lambda})^{-1}(\frac{t}{\lambda} + s)
%\frac{t}{C} \leq \mathbf{r}_{\mathrm{stab}}(x) \leq C t
\end{equation}
for some $C_{\beta_0}>0$ depending only on $\beta_0$ 
%$\lambda, \bar{r},A$ 
but independent of $\Sigma$ or $t$. Here are some explanations. First by $\frac{1}{\lambda}$-Lipschitzness of $\mathbf{r}_{\mathrm{stab}}$, we have $\mathbf{r}_{\mathrm{stab}}(x) - \mathbf{r}_{\mathrm{stab}}(p) \leq \frac{1}{\lambda}\dist_g(x,p) \leq \frac{1}{\lambda}(\mathbf{r}_{\mathrm{stab}}(x) + t)$, which implies $\mathbf{r}_{\mathrm{stab}}(x) \leq (1-\frac{1}{\lambda})^{-1}(\frac{t}{\lambda} +\mathbf{r}_{\mathrm{stab}}(p))\leq (1-\frac{1}{\lambda})^{-1}(\frac{t}{\lambda} + s)$. Next, assume $\mathbf{r}_{\mathrm{stab}}(x) \leq \frac{t}{2}$, then if $\delta = \delta(\bar{A},\beta_0)$ is small, the curvature of $\Sigma\cap A(p,\frac{t}{2},t)$ rescaled by $t^{2}$ is uniformly bounded by a constant depending only on $\beta_0$, and so $\mathbf{r}_{\mathrm{stab}}(x)\geq C_{\beta_0}t $ for $C_{\beta_0}>0$ depending only on $\beta_0$.

%\textbf{Choice of $\epsilon>0$}. We choose $\epsilon= \frac{1}{6\lambda}$.

%We choose $\epsilon$ so that for any closed embedded minimal hypersurface $\Sigma\subset M$, and any $p\in M$, if $\mathbf{r}_{\mathrm{stab}}(p)<\bar{r}$, then 
%$$\sup \{\dist_g(x, \partial B(p,2\lambda \mathbf{r}_{\mathrm{stab}}(p))) |\mathbf{A}(x)|; x\in \Sigma\cap B(p,2\lambda \mathbf{r}_{\mathrm{stab}}(p))\}\geq \epsilon.$$
%The existence of such an $\epsilon$ is proved by contradiction and definition of $\mathbf{r}_{\mathrm{stab}}$.

%\textbf{Choice of $\delta>0$}. A COMPLETER

\vspace{1em}

\subsubsection{Definition of $(\delta,K)$-telescopes} \label{defoftelescopes1}

Consider a parameter $K>1000 + \lambda$. Define
$$\mathcal{A}_{\delta}=\{A(p,s,2 s) ; A(p,\frac{s}{K},Ks) \text{ is a pointed $\delta$-conical annulus}\},$$
$$\mathcal{A}^{bis}_{\delta}=\{A(p,s,2 s) ; A(p,\frac{100s}{K},\frac{Ks}{100}) \text{ is a pointed $\delta$-conical annulus}\}$$
(note that $\mathcal{A}_{\delta}\subset \mathcal{A}^{bis}_{\delta}$). Those sets depend on $(M,g)$, $\Sigma$,  $\bar{A}$, $\lambda$, $\bar{r}$, $\delta$, $K$.
Because of the reverse inequality (\ref{reverse1}), roughly speaking if $A(p,s,2s)$ is in $\mathcal{A}_{\delta}$, then by rescaling it to have size $1$, the second fundamental form of $\Sigma \cap A(p,s,2s)$ is uniformly bounded while there is a point in $B(p,\frac{2\lambda s}{K})$ with large second fundamental form when $K$ is chosen large compared to $\lambda$.

Let $A(z_1,r_1,2r_1),..., A(z_m,r_m,2r_m)$ be annuli in $\mathcal{A}^{bis}_{\delta}$. We say that 
$$T:= \bigcup_{i=1}^m A(z_i,r_i,2r_i)$$ is a \textit{$(\delta,K)$-telescope} if for all $i=1,...m-1$, 
\begin{equation}\label{ininin}
\begin{split}
\overline{B}(z_i,r_i) & \subset B(z_{i+1},r_{i+1}) \\
\overline{B}(z_{i+1},r_{i+1}) & \subset B(z_i,2r_i)\\ 
\overline{B}(z_i,2r_i) & \subset B(z_{i+1},2r_{i+1})
\end{split}
\end{equation}
where $\overline{B}$ denotes the closure of $B$.

Let $T= \bigcup_{i=1}^m A(z_i,r_i,2r_i)$ be a $(\delta,K)$-telescope as above.
Note that $\partial T$ has two components $\partial_-T := \partial B(z_1,r_1)$ and $\partial_+T:= \partial B(z_m,2r_m)$
%. Moreover from the definitions, $T$ is strictly included in a ball of radius $\bar{r}$.

The next lemma states that if $K$ is large enough, any two annuli of $\mathcal{A}^{bis}_{\delta}$ which contain respective points close to each other on their scale have in fact very close centers on their scale and comparable radii.

\begin{lemme}\label{Fact}
 For any $K_1>0$, if $K = K(M,g,\bar{r},\lambda,A,\delta, K_1)$ is chosen large enough, if moreover $A(x,u,2u)$ and $A(y,v, 2v)$ are two annuli in $\mathcal{A}^{bis}_{\delta}$ such that
\begin{equation} \label{assumption1}
\dist_g(A(x,u,2u), A(y,v,2v))\leq \frac{u}{3},
\end{equation}
then 
$$\dist_g(x,y) \leq \frac{u}{K_1} \text{ and } \frac{u}{4} < v <4u.$$
\end{lemme}

\begin{proof}
In this statement, the distance between two annuli is understood in the usual sense of the infimum of the distance between two respective points in those annuli. Consider two annuli $A(x,u,2u), A(y,v,2v) \in \mathcal{A}^{bis}_{\delta}$ as above, and we can first assume that $v\leq u$. By definition, $A(x,\frac{100u}{K},\frac{Ku}{100})$ is a pointed $\delta$-conical annulus.
%By definition, since $\Sigma \cap A(x,\frac{100u}{K},\frac{Ku}{100})$ is $\delta$-close to a cone in $\mathcal{G}_{\beta_0}$, there is a constant $ C=C(\delta,\beta_0)$ such that the second fundamental form of $\Sigma$ satisfies 
%\begin{equation} \label{after1} |\mathbf{A}(p)| \leq C \frac{1}{\dist_g(x,p)} \text{ for any $p\in \Sigma \cap A(x,\frac{100u}{K},\frac{Ku}{100})$}. \end{equation}
If $v\leq \frac{u}{4}$, then because of (\ref{assumption1}), we have
%$$B(y,v) \subset B(x,3u)\backslash B(x,\frac{u}{6})$$
%which in particular means that 
\begin{equation} \label{after2}
B(y,v) \subset B(x,3u)\backslash B(x,\frac{u}{6}). 
%\text{ and } \dist_g(x,y) \geq \frac{u}{6}.
\end{equation}
By (\ref{review1}), 
$$ C_{\beta_0} \frac{u}{6} \leq  \mathbf{r}_{\mathrm{stab}}(y).$$
But by definition again, since $A(y,\frac{100v}{K},\frac{Kv}{100})$ is a pointed $\delta$-conical annulus, 
$$\mathbf{r}_{\mathrm{stab}}(y) < \frac{100v}{K},$$ 
so when $K$ is chosen large enough depending on $C_{\beta_0}$, we get a contradiction.
%$$ \mathbf{r}_{\mathrm{stab}}(y)< \bar{r}, \quad 2\lambda \mathbf{r}_{\mathrm{stab}}(y) < \frac{u}{12} \text{ and } \mathbf{r}_{\mathrm{stab}}(y)  < \frac{\varepsilon' u}{12C}$$  where $\varepsilon'>0$ is as in (\ref{reverse1}). By (\ref{reverse1}) there is a point $p \in \Sigma \cap B(y,\frac{u}{12})$ whose second fundamental form satisfies
%$$|\mathbf{A}(p) |> C\frac{12}{u},$$ contradicting (\ref{after1}) and (\ref{after2}).
This implies that $\frac{u}{4}< v$ provided $K$ is large enough (once $\delta$ is fixed). By  similar arguments, 
%using (\ref{after1}) and (\ref{reverse1}), 
we find that given $K_1$, when $K$ is large enough then $v <4u$ and $\dist_g(x,y) \leq \frac{u}{K_1}$. 
\end{proof}

The lemma below enables to combine a telescope with a certain annulus to get a larger telescope.
\begin{lemme}\label{tedddy}
If $K=K(M,g,\bar{r},\lambda,A,\delta)$ is chosen large enough, the following holds. Let $T = \bigcup_{i=1}^m A(z_i,r_i,2r_i)$ be a $(\delta,K)$-telescope satisfying (\ref{ininin}). For any annulus $A(p,s,2s)$ such that $A(p,\frac{75}{K} s, \frac{K}{75}s)$ is a pointed $\delta$-conical annulus, if 
%\begin{equation}
%\begin{split}
$$\overline{B}(p,s) \subset B(z_1,r_1), \quad\overline{B}(z_1,r_1) \subset B(p,2s),  \quad\overline{B}(p,2s) \subset B(z_m, 2r_m),$$
$$\text{or}  \quad \overline{B}(z_1,r_1) \subset B(p,s),  \quad\overline{B}(p,s) \subset B(z_m,2r_m),  \quad\overline{B}(z_m,2r_m) \subset B(p,2s),$$
%\end{split}
%\end{equation}
then $A(p,s,2s)\cup T$ is also a $(\delta,K)$-telescope.
\end{lemme}
\begin{proof}
We prove the lemma when $\overline{B}(p,s) \subset B(z_1,r_1), \overline{B}(z_1,r_1) \subset B(p,2s),  \overline{B}(p,2s) \subset B(z_m, 2r_m),$ the other case is similar. Let 
$$i_0 := \min\{i\in \{1,...,m\}; \quad \partial {B}(z_i,2r_i) \cap B(p,2s) =\varnothing\}.$$
By Lemma \ref{Fact}, for $K$ large enough, $\dist_g(p,z_{i_0}) \leq 10^{-10} s$.
By definition of $i_0$, necessarily $$\partial B(z_{i_0},r_{i_0}) \cap B(p,2s) \neq \varnothing.$$ Indeed if $i_0=1$ then this is true by hypothesis, and if $i_0>1$ then $\overline{B}(p,2s) \subset {B}(z_{i_0},2r_{i_0})$ and $\partial {B}(z_{i_0-1},2r_{i_0-1}) \cap B(p,2s) \neq\varnothing$, so we get the same conclusion.

If $\partial B(z_{i_0},r_{i_0})\subset B(p,2s)$ then $\overline{B}(z_{i_0},r_{i_0}) \subset B(p,2s)$ so 
$$A(p,s,2s) \cup \bigcup_{i=i_0}^m A(z_i,r_i,2r_i) = A(p,s,2s)\cup T$$
is a $(\delta,K)$-telescope. 

If $\partial B(z_{i_0},r_{i_0})\nsubseteq B(p,2s)$, then $\partial B(z_{i_0},r_{i_0}) \cap \partial B(p,2s) \neq \varnothing$.  Recall that for $K$ large enough, $\dist_g(p,z_{i_0}) \leq 10^{-10} s$.
 Moreover $A(p,1.1s,2.2s) \in \mathcal{A}^{bis}_{\delta}$ because $A(p,\frac{75}{K} s, \frac{K}{75}s)$ is a pointed $\delta$-conical annulus.
Thus for $K$ large enough,
$$A(p,s,2s) \cup A(p,1.1s,2.2s) \cup \bigcup_{i=i_0}^m A(z_i,r_i,2r_i) = A(p,s,2s)\cup T $$ 
is again a $(\delta,K)$-telescope, and the lemma is proved.  
\end{proof}

\vspace{1em}

\subsubsection{Construction of the almost conical region $\mathfrak{C}\subset M$ associated to $\Sigma$}

The distance between two closed subsets of $(M,g)$ is as usual the infimum of the distance between two points belonging respectively to these two sets.

\begin{lemme} \label{almostflatT}

Let $(M^{n+1},g)$ be a closed manifold of dimension $3\leq n+1\leq 7$ and $A>0$.
If the constants $\varepsilon_{\mathrm{SS}}, \frac{1}{\lambda}, \bar{r}$ are small enough, and if $\delta$, $\frac{1}{K}$ are chosen small enough depending on the previous constants then the following is true.

Let $\Sigma$ be a closed smoothly embedded minimal hypersurface of $n$-volume at most $A$ and let $\mathcal{A}_{\delta}$ be defined as above.  
Then there exist disjoint $(\delta,K)$-telescopes $T_1,...,T_L$  at positive distance from one another, and 
\begin{equation} \label{tubb}
\bigcup_{An\in \mathcal{A}_{\delta}} An \subset \bigcup_{i=1}^L T_i.
\end{equation}
Moreover
\begin{enumerate} [label=(\roman*)]
\item for any $i \in \{1,...,L\}$, the intersection $\Sigma \cap T_i$ is diffeomorphic to $(\partial_+T_i \cap \Sigma) \times (0,1)$ where $\partial_+T_i$ is one of the components of $\partial T_i$, and each connected component of $\partial_+T_i \cap \Sigma$ is diffeomorphic to the cross-section of a single cone $\Gamma_i \in \mathcal{G}_{\beta_0}$,
\item for any $i \in \{1,...,L\}$ and any points $x_1,..,x_J\in M$, the connected components of 
$$  \Sigma \cap T_i\cap \bigcap_{j=1}^J B(x_j,\mathbf{r}_{\mathrm{stab}}(x_j))$$ are diffeomorphic to $n$-disks, 
\item for any $i \in \{1,...,L\}$, $T_i$ intersects at most $\mu$ disjoint balls in 
$$\{B(x,\mathbf{r}_{\mathrm{stab}}(x)); x\in M\backslash T_i\}$$ where $\mu = \mu(M,g,\lambda, \bar{r},A)>0$,
%and given $y\in M$, the ball $B(y,\mathbf{r}_{\mathrm{stab}}(y))$ intersects at most two of the $(\delta,K)$-telescopes $T_1,...,T_L$,
\item for any $x\in M$, there are at most $2$ distinct indices $i,i'\in \{1,...,L\}$ such that
$B(x,\mathbf{r}_{\mathrm{stab}}(x)) \cap T_i \neq \varnothing$ and $B(x,\mathbf{r}_{\mathrm{stab}}(x)) \cap T_{i'} \neq \varnothing$.

\end{enumerate}

\end{lemme}

\begin{proof}
Take $K>1000$, $\delta$ to be fixed later, and consider an arbitrary parameter $K_1<K$.

%Let us first construct the $(\delta,K)$-telescopes $T_i$.
First, note that $\Sigma$ being a fixed smooth hypersurface, $\mathbf{r}_{\mathrm{stab}}$ is bounded away from zero and
$$\hat{s}:=\inf\{s;\exists p, A(p,s,2 s) \in\mathcal{A}_{\delta}\} >0.$$
Any $A(p,s,2s) \in \mathcal{A}_{\delta}$ has its closure strictly contained in the union $A(p,0.9s,1.8s)\cup A(p,1.1s,2.2s)$, and one checks that $A(p,\frac{50}{K}0.9s,\frac{K}{50}0.9s)$ and $A(p,\frac{50}{K}1.1s,\frac{K}{50} 1.1s)$ are pointed $\delta$-conical annulus.
Consequently by a compactness argument, 
%any $An\in \mathcal{A}_{\delta,\epsilon}$ has $n$-volume at least $\kappa>0$ for some $\kappa$ depending on $\Sigma$. 
there are finitely many annuli $A(p_1, t_1,2 t_1),...,A(p_k, t_k,2 t_k) \in \mathcal{A}^{bis}_\delta$ such that if we set $\hat{An}_j :=A(p_j,{t_j},2t_j)$, 
$$\bigcup_{An\in \mathcal{A}_\delta} An \subset \bigcup_{j=1}^k \hat{An}_j$$
and for any $j\in \{1,...,k\}$, 
\begin{equation}\label{duredure}
A(p_j, \frac{50}{K} {t_j},\frac{K}{50}t_j) \text{ is a pointed $\delta$-conical annulus}.
\end{equation}
In particular all $\hat{An}_j$ belong to $\mathcal{A}^{bis}_\delta$.
%for any $An \in \mathcal{A}_{\delta}$, there exists $j\in \{1,...,k\}$ so that
%$$An \subset A(p_j, \frac{t_j}{2}, 4 t_j).$$
%It can be checked directly that each $\hat{An}_j$ is a $(\delta,K)$-telescope made of for instance 4 annuli in $\mathcal{A}^{bis}_\delta$. 
To prove the first part of the lemma (\ref{tubb}) it is sufficient to show that there are finitely many $(\delta,K)$-telescopes at positive distance from each other (later called strictly disjoint), whose union contains $\bigcup_{j=1}^k \hat{An}_j$.

We would like to apply Lemma \ref{Fact} to construct the desired $(\delta,K)$-telescopes in (\ref{tubb}). 
Note that if $K$ is taken large enough then all the annuli we are considering   in this proof have an induced metric which is arbitrarily close to being flat (since given $A(y,v,2v) \in \mathcal{A}^{bis}_{\delta}$, $\frac{Kv}{100} < \bar{r}$).
%so, for the sake of simplicity, we only show the construction of the $(\delta,K)$-telescopes when the ambient metric is supposed to be flat.
% Without loss of generality assume that the annuli $\{\hat{An}_J\}_{j=1}^k$ are ordered according to their radii, namely
%$$\text{if $i\leq j$ then } t_i\leq t_j.$$
We construct a finite number of strictly disjoint $(\delta,K)$-telescopes, whose union contains $\bigcup_{i=1}^k \hat{An}_i$ by the following inductive process. In the first step, we set $S^1_1:=\hat{An}_1$. 
%Next if $\hat{An}_2$ is disjoint from $\hat{An}_1$, they form two $(\delta,K)$-telescopes. If $\hat{An}_1\cap \hat{An}_2\neq \varnothing$ then the distance between them is zero so the Fact applies to these two annuli. In particular it can be checked that if $K_1$ is large enough, the union 
%$$A(p_1,\frac{t_1}{4},\frac{t_1}{2}) \cup \hat{An}_1 \cup A(p_1,4t_1,40t_1) $$ contains $\hat{An}_1\cup \hat{An}_2$ since $t_1\leq t_2$. This union is a $(\delta,K)$-telescope since $$A(p_1,t_1,2t_1), A(p_2,t_2,2t_2) \in \mathcal{A}_\delta$$ (and not just in $\mathcal{A}^{bis}_\delta$). 
Next, suppose that at step $j\in \{1,...,k-1\}$ we have a finite number of strictly disjoint $(\delta,K)$-telescopes $\{S^j_q\}_{q=1}^{l_j}$ whose union contains $\bigcup_{i=1}^j \hat{An}_i$. Each $S^j_q$ ($q\in \{1,...,l_j\}$) is by definition a union of elements in $\mathcal{A}^{bis}_\delta$:
\begin{equation}\label{biloop}
S^j_q = \bigcup_{i=1}^{m_{j,q}} A(x_{j,q,i},s_{j,q,i},2s_{j,q,i})\quad  \text{ (the annuli being naturally ordered)}.
\end{equation}
If $\hat{An}_{j+1}$ does not intersect $\bigcup_{q=1}^{l_j} S^j_q$, we can simply set $S^{j+1}_q := S^j_q$ for $q\in \{1,...,l_j\}$ and $S^{j+1}_{j+1}:= \hat{An}_{j+1}$. Otherwise $\hat{An}_{j+1}\cap \bigcup_{q=1}^{l_j} S^j_q\neq \varnothing$ and we want to modify some of the $S^j_q$ to get a new finite collection of strictly disjoint $(\delta,K)$-telescopes $S^{j+1}_1,...,S^{j+1}_{l_{j+1}}$ whose union contains $\bigcup_{i=1}^{j+1} \hat{An}_i$. 

Recall that $\hat{An}_{j+1} = A(p_{j+1},t_{j+1},2t_{j+1}) \in \mathcal{A}^{bis}_\delta$.
If $K=K(M,g,\bar{r},\lambda,A,\delta)$ is sufficiently large, the annulus $A(p_{j+1},0.9t_{j+1},2.2t_{j+1})$ can intersect at most $3$ disjoint $(\delta,K)$-telescopes of the form $S^j_q$. This is verified by a contradiction argument together with Lemma \ref{Fact}: assume we have a sequence of annuli $A_m:=A(x_m,0.9s_m,2.2s_m)$, such that for each $m$, $A_m$ intersects $4$ disjoint annuli $R_{m,1} := A(y_{m,1},r_{m,1},2r_{m,1}),...,R_{m,4} := A(y_{m,4},r_{m,4},2r_{m,4})$, and such that for all $\alpha\in \{1,2,3,4\}$,
$$\lim_{m\to \infty} s_m=0,\quad \lim_{m\to \infty}\frac{1}{s_m}\dist_g(x_m,y_{m,\alpha}) =0, \quad \frac{s_m}{4}< r_{m,\alpha} <4s_m.$$
Rescaling the metric by a factor $s_m^{-2}$, and taking a converging subsequence, we obtain a Euclidean annulus $A_\infty=A_{\text{Eucl}}(0,0.9,2.2) \subset \mathbb{R}^{n+1}$ whose closure intersects the closures of $4$ Euclidean annuli centered at $0$ with disjoint interiors, of the form $A_{\text{Eucl}}(0,r',2r')$, which is impossible. This proves that $A(p_{j+1},0.9t_{j+1},2.2t_{j+1})$ can intersect at most $3$ disjoint $S^j_q$ provided $K$ is large enough. Let $I_j \subset  \{1,...,l_j\}$ denote the set of (at most $3$) indices $q$ for which 
$$ A(p_{j+1},0.9t_{j+1},2.2t_{j+1}) \cap S^j_q \neq \varnothing.$$

Next, because of (\ref{duredure}), for any $\theta_1\in (0.9,1)$ and $\theta_2\in(1,1.1)$,
$$A(p_{j+1}, \frac{75}{K}\theta_1 t_{j+1},\frac{K}{75}\theta_1 t_{j+1}), A(p_{j+1}, \frac{75}{K}\theta_2 t_{j+1},\frac{K}{75}\theta_2 t_{j+1}) \text{ are pointed $\delta$-conical annulus}$$
so in particular
$$A(p_{j+1}, \theta_1 t_{j+1}, 2\theta_1 t_{j+1}), A(p_{j+1}, \theta_2 t_{j+1}, 2\theta_2 t_{j+1})\in \mathcal{A}^{bis}_\delta,$$
and moreover
$$\hat{An}_{j+1} \subset A(p_{j+1}, \theta_1 t_{j+1}, 2\theta_1 t_{j+1}) \cup A(p_{j+1}, \theta_2 t_{j+1}, 2\theta_2 t_{j+1}).$$
We claim the following:
\begin{equation}\label{theta1theta2}
\begin{split}
& \text{if $K=K(M,g,\bar{r},\lambda,A,\delta)$ is large enough,}\\
& \text{by choosing $\theta_1\in (0.9,1),\theta_2\in(1,1.1)$ appropriately,}\\
 & A(p_{j+1}, \theta_1 t_{j+1}, 2\theta_1 t_{j+1}) \cup A(p_{j+1}, \theta_2 t_{j+1}, 2\theta_2 t_{j+1}) \cup \bigcup_{q\in I_j} S^j_q
\\
& \text{is a strictly disjoint union of $(\delta,K)$-telescopes.} 
\end{split}
\end{equation}
Since the subset in (\ref{theta1theta2}) is at positive distance from $\bigcup_{q\in \{1,...,l_j\}\setminus I_j}S^j_q$, that would be enough to finish the inductive step, and conclude the construction of the strictly disjoint $(\delta,K)$-telescopes as in  (\ref{tubb}). Before checking (\ref{theta1theta2}), let us assume for simplicity that $I_j \subset  \{1,...,l_j\}$ is reduced to $1$ element, since the cases where it has $2$ or $3$ elements are treated with almost the same argument. Let us also assume without loss of generality that $I_j =\{1\}$.
% and that  $$ \hat{An}_{j+1} \cap A(x_{j,1,m_1},s_{j,1,m_1},2s_{j,1,m_1})\neq \varnothing, \quad \hat{An}_{j+1}  \backslash B(x_{j,1,m_1},2s_{j,1,m_1}) \neq \varnothing$$ where $A(x_{j,1,m_1},s_{j,1,m_1},2s_{j,1,m_1})$ is as in (\ref{biloop}). In order to show (\ref{theta1theta2}), it suffices to find $\theta_1\in (0.9,1),\theta_2\in(1,1.1)$ so that either  $$\overline{B}(x_{j,1,m_1},2s_{j,1,m_1}) \subset B(p_{j+1}, \theta_1 t_{j+1})$$ or $$\overline{B}(p_{j+1}, \theta_1 t_{j+1}) \subset {B}(x_{j,1,m_1},2s_{j,1,m_1}), \quad \overline{B}(x_{j,1,m_1},2s_{j,1,m_1}) \subset B(p_{j+1}, 2\theta_1 t_{j+1})$$
If (\ref{theta1theta2}) could not be satisfied however large $K$ is, we can as previously argue by contradiction: 
suppose that for a sequence $K_\alpha\to \infty$ and a sequence of closed embedded minimal hypersurface $\{\Sigma_\alpha\}$, we can find an annulus and a $(\delta,K_\alpha)$-telescope of the type $A(p_{j+1},0.9t_{j+1},2.2t_{j+1})$, $S^j_1$ (not indexed by $\alpha$ to simplify notations) which do not satisfy (\ref{theta1theta2}).
Then by rescaling the ambient metric by $t_{j+1}^{-2}$, extracting a subsequence, taking a limit and using Lemma \ref{Fact}, we obtain a Euclidean annulus $A_{\text{Eucl}}(0,0.9,2.2)$ whose closure intersects the closure of another Euclidean annulus $A_{\text{Eucl}}(0,r_1,r_2)$ for some $r_1\in[0,2.2]$, $r_2\in [0.9,\infty]$. But we can always find $\theta_1\in (0.9,1),\theta_2\in(1,1.1)$  so that 
$$\{\theta_1,2\theta_1 ,\theta_2, 2\theta_2\} \cap \{r_1,r_2\} =\varnothing.$$
By Lemma \ref{tedddy}, this means that for large $\alpha$, 
$$A(p_{j+1}, \theta_1 t_{j+1}, 2\theta_1 t_{j+1}) \cup A(p_{j+1}, \theta_2 t_{j+1}, 2\theta_2 t_{j+1})\cup S^j_1$$ is either one $(\delta,K_\alpha)$-telescope or two strictly disjoint $(\delta,K_\alpha)$-telescopes.
 That contradicts our assumption that (\ref{theta1theta2}) could not be true for large $K$, which finishes the proof of (\ref{theta1theta2}) and (\ref{tubb}).

\vspace{1em}
It remains to show Properties $(i)$, $(ii)$, $(iii)$ and $(iv)$ in the statement. Whenever $\delta>0$ is small enough, if $s\leq \frac{1}{2}t< \frac{1}{2} \bar{r}$ and  if $\Sigma \cap A(p,s,t)$ is $\delta$-close to a cone $\Gamma\in \mathcal{G}_{\beta_0}$, then for any $s'\in (s,t)$ the boundary $\partial B(p,s')$ intersects $\Sigma$ transversally, with an angle $\frac{1}{1000}$-close to $\frac{\pi}{2}$. 
Let  $T_1,...,T_L$ be the disjoint $(\delta,K)$-telescopes constructed above so that (\ref{tubb}) is true. 
Suppose that 
$$T_1 = \bigcup_{i=1}^{m} A(z_i,r_i,2r_i)$$
where the annuli are naturally ordered, meaning that (\ref{ininin}) is satisfied. 

By definition of $\delta$-close to a cone, $\Sigma \cap A(z_1,r_1,2r_1)$ is diffeomorphic to $(\Sigma \cap \partial B(z_1,2r_1)) \times (0,1)$. To show $(i)$ we argue by induction: we suppose that for $j\in \{1,...,m-1\}$, $\Sigma \cap \bigcup_{i=1}^{j} A(z_i,r_i,2r_i)$ is diffeomorphic to $(\Sigma \cap \partial B(z_j,2r_j)) \times (0,1)$, and we want to check that
$$\Sigma \cap \bigcup_{i=1}^{j+1} A(z_i,r_i,2r_i) \text{  is diffeomorphic to } (\Sigma \cap \partial B(z_{j+1},2r_{j+1})) \times (0,1).$$
Note that, by choice of $\delta$, the restriction of the  function $\dist_g(z_{j+1},.)$ on $\Sigma$ is smooth, with gradient of norm close to $1$ inside $B(z_{j+1},2r_{j+1})\backslash B(z_{j},2r_{j})$. Every gradient line of  $\dist_g(z_{j+1},.)\big|_\Sigma$ starting at a point of $\Sigma\cap \partial B(z_{j+1},2r_{j+1})$ arrives after positive time at a point of $\Sigma\cap \partial B(z_{j},2r_{j})$, and vice versa any point of $\Sigma\cap \partial B(z_{j},2r_{j})$ can be thus attained, besides time of arrival is a smooth positive function on $\Sigma\cap \partial B(z_{j+1},2r_{j+1})$. Using Lemma \ref{Fact}, when $K$ is large enough, by rescaling the gradient flow of $\dist_g(z_{j+1},.)\big|_\Sigma$ we get a diffeomophism from $(\Sigma\cap \partial B(z_{j+1},2r_{j+1})) \times (0,1)$ to $\Sigma \cap (B(z_{j+1},2r_{j+1})\backslash B(z_{j},2r_{j}))$, which is enough to conclude the induction. Property $(i)$ also requires that each component of $\partial_+T_i \cap \Sigma$ is diffeomorphic to the cross-section of a same cone $\Gamma_i \in \mathcal{G}_{\beta_0}$ but this is clear from the above argument.

\begin{comment}
Next recall that intersections of the form $\Sigma \cap \bigcap_{j=1}^J B(x_j,\mathbf{r}_{\mathrm{stab}}(x_j))$ are unions of convex open $n$-disks, see Subsection \ref{stabrad}.  
%Property (ii) holds when $\delta$ is very small and $\delta < \delta$, using the following observations. `
If $\delta$ is small, then the intersection between $\partial T_i$ and $\Sigma$ is transversal, with angle close to $\frac{\pi}{2}$. If $A(p,s,2s)$ is one of the annuli in $\mathcal{A}^{bis}_\delta$ composing $T_i$, intersecting  $B(x,\mathbf{r}_{\mathrm{stab}}(x))$, then since $A(\frac{100s}{K}, \frac{Ks}{100})$ is a pointed $\delta$-conical annulus, by (\ref{review1}),
%because $\mathbf{r}_{\mathrm{stab}}(p) < \frac{100s}{K}<t$, we have 
%$$\lambda \mathbf{r}_{\mathrm{stab}}(x) < \mathbf{r}_{\mathrm{stab}}(x) +2t$$
%which implies
\begin{equation} \label{pilat}
\mathbf{r}_{\mathrm{stab}}(x) \leq (1-\frac{1}{\lambda})^{-1}(\frac{2s}{\lambda} + \frac{100s}{K}).
\end{equation}
In other words, if $\frac{1}{\lambda}, \frac{1}{K}$ are small enough depending on $(M,g)$ and $A$, then any ball $B(x,\mathbf{r}_{\mathrm{stab}}(x))$ intersecting $\partial_{\pm} T_i$ has small radius $\mathbf{r}_{\mathrm{stab}}(x)$ compared to the size of $\partial_{\pm}T_i$. The curvature bound (\ref{ssbound}). 
%and the fact that  cones in $\mathcal{G}_{\beta_0}$ have cross-sections with second fundamental form controlled by $\beta_0$ (and hence $(M,g), A, \lambda$), Property $(ii) $ becomes limpid. 
\end{comment}

Consider a non-empty family $\{ B(x_j,\mathbf{r}_{\mathrm{stab}}(x_j))\}_{j=1}^J$. Recall that if $\varepsilon_\mathrm{SS}$ is small enough and the rescaled metric $\bar{r}^{-2}g$ is close enough to the flat metric in $1$-balls, the curvature bound (\ref{ssbound}) implies that any connected component of $\Sigma \cap \bigcap_{j=1}^J B(x_j,\mathbf{r}_{\mathrm{stab}}(x_j))$ is a convex open $n$-disk inside $\Sigma$ with the induced metric. 
If $\delta$, $\frac{1}{K}$, $\frac{1}{\lambda}$ are small, then the intersection between $\partial T_i$ and $\Sigma$ is transversal, with angle close to $\frac{\pi}{2}$.
Hence the second inequality in (\ref{review1}) implies that when $\delta$, $\frac{1}{K}$, $\frac{1}{\lambda}$ are small enough, for any telescope $T_i$,  any connected component of $\Sigma \cap T_i \cap \bigcap_{j=1}^J B(x_j,\mathbf{r}_{\mathrm{stab}}(x_j))$ is an open $n$-disk inside $\Sigma$. In particular Property $(ii)$ holds.
%If a ball in  $$\{B(x,\mathbf{r}_{\mathrm{stab}}(x)); x\in M\backslash T_i\}$$ intersects $T_i$, it intersects either $\partial_+T_i$ or $\partial_- T_i$.
Property $(iii)$ follows from the first inequality in (\ref{review1}). Property $(iv)$ follows from the second inequality in (\ref{review1}),  Lemma \ref{Fact} and the fact that the telescopes $T_i$ are disjoint.

\end{proof}

We can now define the almost conical region (which depends on $\Sigma$).

\begin{definition} \label{tuna1}

Given a closed embedded minimal hypersurface $\Sigma$, we define the almost conical region as
$$\mathfrak{C} := 
%\Sigma \cap 
\bigcup_{i=1}^L T_i,$$
where $ T_i$ are the $(\delta,K)$-telescopes given by Lemma \ref{almostflatT}. 
\end{definition}

Of course from the proof of Lemma \ref{almostflatT}, we see that the choice of these telescopes is not unique.
From now on we fix $\bar{r}$, $\lambda$, $A,\bar{A}$ as above, $\delta$ and $K>2^{1000}$ as in Lemma \ref{almostflatT}. 

%\begin{remarque}
%For a closed embedded minimal hypersurface $\Sigma$ and its almost conical region $\mathfrak{C}$ whose connected components are $T_1,...,T_L$ as above, for each $(\delta,K)$-telescope $T_i$ , the intersection $\Sigma \cap T_i$ is diffeomorphic to $X \times (0,1)$ where $X:= $ 
%\end{remarque}

\vspace{1em}

\subsection{No curvature concentration far from $\mathfrak{C}$} \label{noconcentr}

%If $g$ is a given metric on a manifold $M$, for any integer $k$ we denote by $\mathcal{F}_{k}$ the set of open geodesic balls of radius in $[2^{-(k+1)},2^{-k})$.
%Let $A>0$ be a fixed area bound.
Recall that $\bar{r}$, $\lambda>2$, $\mathbf{r}_{\mathrm{stab}}$, $\bar{A}$, $\delta$, $K>2^{1000}$ were introduced in Subsection \ref{stabrad}. 
%We will consider a Euclidean ball with a metric $g$ close to the Euclidean one. 
We also already fixed $\bar{A}>0$, $\beta_0>0$ and $\delta, K$, we defined $\mathcal{G}_{\beta_0}$ and being ``$\delta$-close to a cone $\Gamma \in \mathcal{G}_{\beta_0}$ ''  in the previous subsection.

We denote by $B_{\text{Eucl}}(.,.)$ (resp. $B(.,.)$) a geodesic open ball for the Euclidean metric (resp. the metric $g$).  
The following theorem holds for dimensions $3\leq n+1 \leq 7$. Roughly speaking it is a quantified version of the following statement: given a uniform $n$-volume bound, for a minimal hypersurface approximated by a cone in the varifold sense, if the curvature of the hypersurface is concentrated around a single point then the minimal hypersurface is in fact smoothly approximated by a smooth minimal cone on a large region. 
The assumptions of Theorem \ref{noconcentration} are written in a way to be more easily checked in the next Corollary \ref{sotechnical}, but are far from being optimal. In the statement of Theorem \ref{noconcentration}, we set $\bar{r}=\infty$ so that $\mathbf{r}_{\mathrm{stab}}$ can take values in $(0,\infty]$.

%In the statement, the minimal hypersurface is defined in a large Euclidean ball, and the Riemannian metric on that ball is close to the Euclidean one;  in Corollary \ref{sotechnical}, we will apply Theorem \ref{noconcentration} after rescaling everything.
% (in particular we use the notations $\Sigma$, $g$ in both Theorem \ref{noconcentration} and Corollary \ref{sotechnical} even though they do not exactly represent the same objects). 

\begin{theo} \label{noconcentration}
Let $3\leq n+1\leq 7$.
%For any $\bar{A}>0$, $\delta>0$ and $K>2^{1000}$.
Set $\bar{K} := 30\lambda K^2$. 

There exist 
%$\bar{K}\geq30\lambda K^2$, 
$\beta_1>1$, $\mu>0$, and $\bar{R}>1000$ depending only on $\lambda, \bar{A}, \delta, {K}$ such that the following is true. Let $g$ be a metric $\mu$-close to the Euclidean metric in the $C^5$-topology on $B_{\text{Eucl}}(0,\bar{K}) \subset \mathbb{R}^{n+1}$, and let 
$$(\Sigma, \partial \Sigma) \subset (B_{\text{Eucl}}(0,\bar{K}), \partial B_{\text{Eucl}}(0,\bar{K}))$$ be a compact embedded minimal hypersurface with respect to $g$, such that 
\begin{itemize}
\item the $n$-volume of $\Sigma\cap B(0,\bar{K})$ is at most $\bar{A}\bar{K}^n/2$, 
\item $\mathbf{r}_{\mathrm{stab}}(0) < {\bar{K}}^{-1}$,
\item there is $y' \in B(0,7\lambda)$
with $\mathbf{r}_{\mathrm{stab}}(y')=1$, 
\item $\Theta_g(0,10\lambda K) -  \Theta_g(0,\frac{1}{3K}) \leq \beta_1^{-1}.$
\end{itemize}
Then 
\begin{enumerate} [label=(\roman*)]
\item either there are two balls $b_1=B(z',\mathbf{r}_{\mathrm{stab}}(z')), b_2=B(z'',\mathbf{r}_{\mathrm{stab}}(z''))$ such that 
$$z',z''\in B(0, \frac{\bar{K}}{2}), \quad 3\lambda b_1\cap 3\lambda b_2 =\varnothing,$$
$$\mathbf{r}_{\mathrm{stab}}(z'), \mathbf{r}_{\mathrm{stab}}(z'') \in [2^{-(R+1)}, 2^{-{R}}) \quad \text{for some $R\in [1000,\bar{R}]$},$$
\item or $\Sigma \cap A\big(0,\frac{1}{2{K}}, 7\lambda {K}\big)$  is ${\delta}$-close to a cone $\Gamma \in \mathcal{G}_{\beta_0}$ in the sense of Subsection \ref{almostflat2}. 
%is $\bar{\delta}$-close  in the $C^2$-topology to a smooth minimal cone with tip in $B_{\text{Eucl}}(0,2)$.   

\end{enumerate}

\end{theo}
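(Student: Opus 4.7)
The strategy is a contradiction/compactness argument that distills the dichotomy from the structure of tangent cones and the definition of $\beta_0$. Suppose no triple $(\beta_2,\mu,\bar{R})$ works; then there exist sequences $\beta_2^{(k)}\to\infty$, $\mu^{(k)}\to 0$, $\bar{R}^{(k)}\to\infty$, and metrics $g_k$ on $B_{\mathrm{Eucl}}(0,\bar{K})$ with compact embedded minimal hypersurfaces $\Sigma_k$ satisfying the four bulleted hypotheses but for which both (i) and (ii) fail. Here the hypothesis $s_1(0)<\bar{K}^{-1}$ ensures serious instability at the origin, while $s_1(y_k')=1$ for some $y_k'\in B(0,7\lambda)$ provides stability at a definite unit scale not far from $0$.

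First I would pass to a varifold limit. The $n$-volume bound $\tfrac12\bar{A}(\bar{K}-1)^n$ is uniform, so by compactness of integral varifolds a subsequence converges to a stationary integral $n$-varifold $V$ with respect to the Euclidean metric (since $\mu^{(k)}\to 0$). The fourth bullet forces $\Theta_{g_k}(0,2)-\Theta_{g_k}(0,1)\to 0$, so by the monotonicity formula $V$ is a minimal cone with tip at $0$, and the density bound gives $\Theta_{\mathrm{Eucl}}(0,1;V)\le \bar{A}$. The tip is pinned at $0$: the stability hypothesis at $y_k'\in B(0,7\lambda)$ together with Schoen--Simon curvature estimates forces curvature to remain uniformly bounded on an annulus surrounding $y_k'$, so any singular behavior of the limit is concentrated at the point where $s_1\to 0$, namely $0$. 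Moreover the Frankel/embeddedness remark ensures the cone is connected, so $V$ has multiplicity one (or can be analyzed as such) and $V$ is represented by a smooth cone $C$ on $\mathbb{R}^{n+1}\setminus\{0\}$. Away from $0$, Allard's regularity together with the Schoen--Simon estimates upgrades varifold convergence to smooth $C^3$ convergence on compact subsets of $\mathbb{R}^{n+1}\setminus\{0\}$.

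Now I would execute the dichotomy. Consider the maximal curvature $M_C:=\max_{y\in C\cap \partial B_{\mathrm{Eucl}}(0,1)}|\mathbf{A}(y)|$ of the limit cone. If $M_C\le \beta_0$, then $C\in\mathcal{G}_{\beta_0}$. Because $C$ is a cone, smooth convergence on $A_{\mathrm{Eucl}}(0,\rho,2\rho)$ for $\rho=1$ upgrades to smooth convergence on every $A_{\mathrm{Eucl}}(0,\rho,2\rho)$ with $\rho\in[\tfrac{1}{2K},7\lambda K]$ by dilation, and one can assemble the diffeomorphisms $\Phi_r$ required by the definition of ``$\delta$-close to a cone'' via rescaling. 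For $k$ large this contradicts the assumption that (ii) fails. If instead $M_C>\beta_0$, then by the very definition of $\beta_0$, $C$ itself carries two balls $b_1^C=B(x_1,s_1^C(x_1)),\;b_2^C=B(x_2,s_1^C(x_2))\subset B_{\mathrm{Eucl}}(0,1)\setminus B_{\mathrm{Eucl}}(0,1/2)$ with $3\lambda b_1^C\cap 3\lambda b_2^C=\varnothing$ and radii $<2^{-1000}$. Smooth convergence $\Sigma_k\to C$ and continuity of the stability radius (Lemma \ref{continui}) let me pull back these balls to $\Sigma_k$: I get points $z_k',z_k''\in B(0,\bar{K}/2)$ with $s_1^{\Sigma_k}(z_k')\to s_1^C(x_1)$ and likewise for $z_k''$, and with $3\lambda B(z_k',s_1(z_k'))\cap 3\lambda B(z_k'',s_1(z_k''))=\varnothing$ for $k$ large.

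The last step is the dyadic/equality normalization in conclusion (i): we need $s_1(z_k')=s_1(z_k'')\in[2^{-R-1},2^{-R})$ for some $R\in[1000,\bar{R}]$. Using continuity of $s_1$ along a short path from $z_k'$ toward the tip (resp.\ from $z_k''$), the stability radius varies continuously from a small positive value down to arbitrarily small values (since $s_1(0)<\bar{K}^{-1}$), so by the intermediate value theorem I can slide each point inside the cone-like region to arrange that $s_1$ realizes any prescribed value in a dyadic interval $[2^{-R-1},2^{-R})$ with $R\ge 1000$; picking the larger value between the two original radii and adjusting both points to take that dyadic value preserves disjointness of $3\lambda b_i$ for $k$ large, since the perturbation is small and the original balls had disjoint $3\lambda$-expansions with a definite safety margin. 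The bound $R\le \bar{R}$ is automatic because $\bar{R}^{(k)}\to\infty$. This contradicts the failure of (i). The main obstacle is carrying out the scale-matching in the dyadic adjustment while keeping the balls inside $B(0,\bar{K}/2)$ and with genuinely disjoint $3\lambda$-expansions; this is why the statement allows the large parameter $\bar{R}$ and the generous ball $B(0,\bar{K}/2)=B(0,6\lambda K)$ to absorb the perturbations.
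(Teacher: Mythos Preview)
Your overall strategy (contradiction + compactness + dichotomy on the limiting cone) matches the paper's, but you have inverted the key logical step and this creates a real gap. You first pass to a varifold limit and then assert that ``Allard's regularity together with the Schoen--Simon estimates upgrades varifold convergence to smooth $C^3$ convergence on compact subsets of $\mathbb{R}^{n+1}\setminus\{0\}$.'' Neither tool applies as stated: Allard needs the limit to already be smooth with multiplicity one there, and Schoen--Simon needs $\Sigma_k$ to be \emph{stable} on the region in question. You only know $s_1(y_k')=1$, which gives stability (hence curvature bounds) near $y_k'$, not on the whole annulus $A(0,\tfrac{1}{2K},7\lambda K)$. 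Without this, the limit cone need not be smooth away from $0$, and the subsequent dichotomy on $\max|\mathbf{A}|$ is not meaningful.

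The paper resolves this by using the failure of alternative (i) \emph{before} passing to the limit. Concretely, one fixes $\bar{R}$ explicitly (via a separation constant $\mathbf{s}$ depending only on $\lambda,K$) and argues: if some $z$ in the annulus had $s_1(z)<2^{-(\bar{R}+1)}$, then by continuity of $s_1$ along two paths $\gamma_{0,u}$ and $\gamma_{z,v}$ from $0$ and $z$ to points in $B(y',1)$ (where $s_1\ge 1/2$), chosen with disjoint $\mathbf{s}$-tubes, the intermediate value theorem produces $z',z''$ on these separate paths with $s_1(z')=s_1(z'')\in[2^{-(\bar{R}+1)},2^{-1000})$ and $3\lambda$-disjoint balls. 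That would be exactly (i), a contradiction. Hence $s_1\ge 2^{-(\bar{R}+1)}$ uniformly on the annulus, which \emph{is} the stability input needed for Schoen--Simon curvature bounds and thus for smooth convergence; the limit is then automatically a smooth cone, and the dichotomy (either $\Gamma\in\mathcal{G}_{\beta_0}$, giving (ii), or $\max|\mathbf{A}|>\beta_0$, which by definition of $\beta_0$ gives (i)) finishes the proof. Your ``slide toward the tip'' adjustment is a version of this IVT/path idea, but you invoke it only \emph{after} the limit, where it cannot supply the missing regularity; moving it to the front of the argument, and running it along two disjoint paths rather than on the limit cone, is what makes the proof work.
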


\begin{proof}

%Set 
%$$\bar{K}:= 30\lambda K^2.$$
%Fix $\bar{R}=\max\{1000,\log_2(6\lambda \bar{K})\}$ so that $3\lambda 2^{-\bar{R}}\leq \frac{1}{2\bar{K}}$.
%Since we are assuming $g$ close to the Euclidean metric, we can assume that $B(0,3\bar{K})\subset B_{\text{Eucl}}(0,4\bar{K})$, $B(y',1)\subset B_{\text{Eucl}}(0,3\bar{K})$.
Let $\bar{K}:= 30\lambda K^2$ and consider a large constants $\bar{R}$. We will give an explicit bound later in the proof, see (\ref{impos}). 

We argue by contradiction and assume that the theorem is not true for $\bar{R}$: let $g_k$ be a sequence of metrics as in the statement converging to the Euclidean metric, and let $\Sigma_k$ be a sequence of minimal hypersurfaces (with respect to $g_k$) as in the statement, except that the third bullet is replaced by 
$$\lim_{k\to \infty} \Theta_{g_k}(0,10\lambda K) -  \Theta_{g_k}(0,\frac{1}{3K})=0,$$
and assume that neither $(i)$ nor $(ii)$ occurs. For clarity, denote by $B_k(.,.)$ 
%(resp. $B_g(.,.)$) 
a geodesic ball for the metric $g_k$. 
%(resp. a metric $g$).
The assumption $\lambda \mathbf{r}_{\mathrm{stab}}(0)\leq \frac{1}{30{K^2}} \leq 1$ for each $k$, means that each $\Sigma_k$ intersects $B_k(0,1)$.
As noted in Subsection \ref{stabrad}, for $g_k$ and $\Sigma_k$, in any ball of the form $B_k(x,\mathbf{r}_{\mathrm{stab}}(x))$, the hypersurface $\Sigma_k$ satisfies curvature bounds due to Schoen-Simon of the form
\begin{equation} \label{schoensimon1}
|\mathbf{A}(y)| \mathbf{r}_{\mathrm{stab}}(x) \leq \varepsilon_{\mathrm{SS}},
\end{equation}
where $y\in B_k(x,\mathbf{r}_{\mathrm{stab}}(x))$ and $\varepsilon_{\mathrm{SS}}$ was chosen in Subsection \ref{stabrad}. By definition of $\mathbf{r}_{\mathrm{stab}}$, by the fact that $\mathbf{r}_{\mathrm{stab}}(y')=1$ and $\mathbf{r}_{\mathrm{stab}}(0)<\bar{K}^{-1}$, we have 
$$B_k(y',1)\subset B_k(0, 10\lambda K)\backslash B_k(0,\frac{1}{3{K}}).$$
Note that $\frac{\bar{K}}{2} > 10\lambda K$.  Let us check that provided $\bar{R}$ is well chosen, $(i)$ not happening means the following\newline

 %the choice of $\bar{R}$ and the fact that $(i)$ does not happen mean the following\newline

\textbf{Fact:}
% Under the previous assumptions, f
If $\bar{R}$ is large enough depending only on $\lambda, K$, and if $(i)$ does not occur, then in $B_k(0,10\lambda {K})\backslash B_k(0,\frac{1}{3{K}})$, the stability radius $\mathbf{r}_{\mathrm{stab}}(.)$ of $\Sigma_k$ is  bounded below by $2^{-\bar{R}}$  (thus the second fundamental form is also uniformly bounded independently of $k$).
\newline

To check the Fact, we start by observing that given two points $a,b $ with 
$$B_k(a,\mathbf{r}_{\mathrm{stab}}(a)), B_k(b,\mathbf{r}_{\mathrm{stab}}(b)) \subset B_k(0, 10\lambda K),$$ for any number $s$ between $\mathbf{r}_{\mathrm{stab}}(a)$ and $\mathbf{r}_{\mathrm{stab}}(b)$, if the $s$-neighborhood of a continuous path $\gamma_{a,b}$ joining $a$ to $b$ is contained in $B(0,\bar{K})$, then there is a point $c\in \gamma_{a,b}$, such that $\mathbf{r}_{\mathrm{stab}}(c)=s$. This follows from the continuity of $\mathbf{r}_{\mathrm{stab}}$, see Lemma \ref{continui}. Moreover, when $k$ is large and $g_k$ close enough the Euclidean one, 
%there exists $\hat{r}\in(0,1)$ (depending only on $\lambda$, $K$) such that 
for any point $z\in B_k(0, 10\lambda K) \backslash B_k(0,\frac{1}{3{K}})$ there are two points $u,v\in B_k(y',1)$ and paths $\gamma_{0,u},\gamma_{z,v}$ so that 
$$\gamma_{z,v}\subset B_k(0, 10\lambda K) \backslash B_k(0,\frac{1}{3{K}})$$
 and the $\frac{1}{10K}$-neighborhoods (with respect to $g_k$) of $\gamma_{0,u}$ and $\gamma_{z,v}$ are disjoint, contained in $B_k(0,\bar{K})$. We impose the real number $\bar{R}$ to satisfy
\begin{equation}\label{impos}
\bar{R} = \log_2(30 \lambda K )+1000,
\end{equation}
so that in particular 
$$6\lambda 2^{-\bar{R}}<\frac{1}{10K} \text{ and }\bar{R}>1000.$$
 Now any point $u\in B_k(y',1)$ has $\mathbf{r}_{\mathrm{stab}}(u) \geq \frac{\lambda-1}{\lambda} \geq \frac{1}{2}$ by definition of $\mathbf{r}_{\mathrm{stab}}$. So because
$$\mathbf{r}_{\mathrm{stab}}(0)\leq  \frac{1}{\bar{K}} =  \frac{1}{30\lambda K^2}< 2^{-\bar{R}}$$ 
and due to the previous observations, for any point $z\in  B_k(0, 10\lambda K ) \backslash B_k(0,\frac{1}{3{K}})$ with $\mathbf{r}_{\mathrm{stab}}(z)<2^{-\bar{R}} $, there are two points $z',z''$ satisfying 
$$z',z''\in B_k(0,  10\lambda K  ) \subset B_k(0,  \frac{\bar{K}}{2}  )  ,$$
$$B_k(z',3\lambda \mathbf{r}_{\mathrm{stab}}(z'))\cap B_k(z'',3\lambda \mathbf{r}_{\mathrm{stab}}(z''))=\varnothing, $$
$$\text{ and } \mathbf{r}_{\mathrm{stab}}(z')= \mathbf{r}_{\mathrm{stab}}(z'') \in [2^{-(\bar{R} +1)}, 2^{-\bar{R}}).$$
More precisely, to construct $z',z''$ we do this: we find $u,v\in B_k(y',1)$, $z'\in\gamma_{0,u}$ and $z''\in\gamma_{z,v}$ such that $\mathbf{r}_{\mathrm{stab}}(z')=\mathbf{r}_{\mathrm{stab}}(z'') \in [2^{-(\bar{R} +1)}, 2^{-\bar{R}})$ and by construction $\dist_g(z',z'') >\frac{1}{10K}> 6\lambda2^{-\bar{R}}$.
But since we are assuming that $(i)$ does not happen, for any point $z\in  B_k(0, 10\lambda K ) \backslash B_k(0,\frac{1}{3{K}})$ we must have $\mathbf{r}_{\mathrm{stab}}(z)\geq 2^{-\bar{R}} $ and the Fact is proved.

Thanks to this Fact and the $n$-volume bound (first bullet in the assumptions), one can take a converging subsequence of $\Sigma_k$ in $B_k(0,10\lambda{K})\backslash B_{\text{Eucl}}(0,\frac{1}{3{K}})$ so that the smooth limit is a compact connected minimal hypersurface 
$$\Sigma_\infty\subset B_{\text{Eucl}}(0,10\lambda{K}) \backslash B_{\text{Eucl}}(0,\frac{1}{3{K}}).$$
Moreover, by assumption we have in the limit:
$$\Theta_{\text{Eucl}}(0,1) \leq \bar{A},$$
$$\Theta_{\text{Eucl}}(0,10\lambda K )-\Theta_{\text{Eucl}}(0,\frac{1}{3 K})=0.$$ This implies by the monotonicity formula that there is a smooth cone $\Gamma$ such that
$$\Sigma_\infty = \Gamma\cap \text{ in }\big(B_{\text{Eucl}}(0,10\lambda{K}) \backslash B_{\text{Eucl}}(0,\frac{1}{3{K}})\big).$$
For large $k$, by embeddedness, 
$$\Sigma_k\cap \big(B_k(0,7\lambda{K}) \backslash B_k(0,\frac{1}{2{K}})\big)$$
 is a union of graphs over $\Gamma$ in 
$$B_k(0,7\lambda{K}) \backslash B_k(0,\frac{1}{2{K}}).$$

The cone $\Gamma$ belongs to the family  $\mathcal{G}_{\beta_0}$ (see Subsection \ref{almostflat2} for its definition). Indeed, by our construction, any point of $\Gamma \backslash  B_{\text{Eucl}}(0,1/4)$ has $\mathbf{r}_{\mathrm{stab}} \geq 2^{-\bar{R}}$, so if $\Gamma$ were not a cone in $\mathcal{G}_{\beta_0}$ then there would be 	a point $x\in  B_{\text{Eucl}}(0,2)$ such that 
$$\mathbf{r}_{\mathrm{stab}}(x)= 2^{-(R+1)}  \text{ for some } R\in [1001,\bar{R}-1],$$
$$B(y,4\lambda \mathbf{r}_{\mathrm{stab}}(\frac{4x}{3})) \cap B(x,4\lambda \mathbf{r}_{\mathrm{stab}}(x)) = \varnothing.$$
This is not possible since we assumed that $(i)$ in the statement does not hold for $k$ large. To conclude, we indeed have $\Gamma \in\mathcal{G}_{\beta_0}$ so $(ii)$ actually holds for $k$ large enough which contradicts our assumption that the theorem was not true.

\end{proof}

The next technical lemma is a corollary of the previous theorem. Let $(M^{n+1},g)$, $3\leq n+1\leq 7$, $\bar{r}$, $\lambda>2$ be as in Subsection \ref{stabrad}, $A$, $\bar{A}$, $\delta$, $K>2^{1000}$ as in Subsection \ref{almostflat2}, let  $\bar{K}=30\lambda K^2$. The constants $\beta_1$, $\mu$, $\bar{R}>1000$ are given by Theorem \ref{noconcentration}. Define $\mathfrak{C}$ as in Subsection \ref{almostflat2}. We can assume that for any ball of radius $r$ less than $\bar{r}$ the metric $\frac{1}{{r}^2}g$ is $\mu$-close to a flat metric in the $C^5$-topology. Recall that given a ball $b$ and a positive number $t$, $tb$ denotes the ball with same center as $b$ and radius $t$ times larger.  We did not try to optimize the constants.

\begin{coro} \label{sotechnical}
Let $\Sigma\subset M^{n+1}$ be a closed smoothly embedded minimal hypersurface of $n$-volume at most $A$.
Consider $x_0,x_1,x_2\in M$ such that
\begin{itemize}
\item $\mathbf{r}_{\mathrm{stab}}(x_0) < {\bar{r}}$, $\mathbf{r}_{\mathrm{stab}}(x_1)\leq \mathbf{r}_{\mathrm{stab}}(x_0)\bar{K}^{-1}$, $\mathbf{r}_{\mathrm{stab}}(x_2) \leq \mathbf{r}_{\mathrm{stab}}(x_1)\bar{K}^{-1}$,
\item \begin{align*}
%& B(x_0,1.1\lambda \mathbf{r}_{\mathrm{stab}}(x_0)) \cap B(x_1, \mathbf{r}_{\mathrm{stab}}(x_1)) \neq \varnothing, \\ 
& B(x_0,1.1\lambda \mathbf{r}_{\mathrm{stab}}(x_0)) \cap B(x_2,\mathbf{r}_{\mathrm{stab}}(x_2)) \neq \varnothing, \\
& B(x_1,6\lambda \mathbf{r}_{\mathrm{stab}}(x_1) ) \cap B(x_2,3\lambda \mathbf{r}_{\mathrm{stab}}(x_2)) \neq \varnothing,
\end{align*}
\item $\Theta_g(x_2, 10\lambda K \mathbf{r}_{\mathrm{stab}}(x_1)) - \Theta_g(x_2, \frac{1}{3K} \mathbf{r}_{\mathrm{stab}}(x_1)) \leq \beta_1^{-1}$,
\item $x_1$ is not contained in the almost conical region $\mathfrak{C}$.
\end{itemize}
Then there are two balls $b_1,b_2$ of the form $B(y,\mathbf{r}_{\mathrm{stab}}(y))$ such that 
\begin{enumerate} [label=(\roman*)]
\item $3\lambda b_1\cap 3\lambda b_2\neq \varnothing$, 
\item the radii of $b_1,b_2$ belong to 
$$[2^{-({R}+1)}  \mathbf{r}_{\mathrm{stab}}(x_1), 2^{-R} \mathbf{r}_{\mathrm{stab}}(x_1)) \text{ for some  $R\in [1000, \bar{R}]$},$$
\item $3\lambda b_1\cup 3\lambda b_2 \subset B(x_0,1.7\lambda \mathbf{r}_{\mathrm{stab}}(x_0))$. 
\end{enumerate}
\end{coro}

\begin{proof}

Recall that $\bar{K}:=30\lambda K^2$. We apply Theorem \ref{noconcentration} by rescaling and replacing $\mathbf{r}_{\mathrm{stab}}(x_1)$ (resp. $x_1$, $x_2$) with $1$ (resp. $y'$, $0$). We need to check that:
\begin{itemize}
\item $\bar{K}\mathbf{r}_{\mathrm{stab}}(x_1) <\bar{r}$, this comes from the fact that $\bar{K}\mathbf{r}_{\mathrm{stab}}(x_1)\leq \frac{\bar{K} \mathbf{r}_{\mathrm{stab}}(x_0)}{\bar{K}} \leq \mathbf{r}_{\mathrm{stab}}(x_0)<\bar{r}$;
\item the $n$-volume of $\Sigma\cap B(x_2,\bar{K}\mathbf{r}_{\mathrm{stab}}(x_1))$ is at most $\bar{A}(\bar{K}\mathbf{r}_{\mathrm{stab}}(x_1))^n/2$ but this follows from the area bound, the definition of $\bar{A}$ (see Subsection \ref{almostflat2}) and the previous point;
\item $\mathbf{r}_{\mathrm{stab}}(x_2) < \frac{\mathbf{r}_{\mathrm{stab}}(x_1)}{\bar{K}}$ but this is true by assumption of the lemma;
\item $\dist_g(x_1,x_2) < 7\lambda \mathbf{r}_{\mathrm{stab}}(x_1)$, this is because one of the assumptions give $\dist_g(x_1,x_2)\leq 6\lambda \mathbf{r}_{\mathrm{stab}}(x_1) + 3\lambda \mathbf{r}_{\mathrm{stab}}(x_2)\leq (6\lambda +\frac{3\lambda}{\bar{K}})\mathbf{r}_{\mathrm{stab}}(x_1)< 7\lambda \mathbf{r}_{\mathrm{stab}}(x_1)$.
\end{itemize}
Thus we can indeed apply Theorem \ref{noconcentration}. Suppose for a moment that $(ii)$ of Theorem \ref{noconcentration} occurs and that
$$\Sigma\cap A\big(x_2,\frac{\mathbf{r}_{\mathrm{stab}}(x_1)}{2K}, 7\lambda K\mathbf{r}_{\mathrm{stab}}(x_1)\big)$$ is $\delta$-close to a cone $\Gamma\subset \mathcal{G}_{\beta_0}$. As 
$$\mathbf{r}_{\mathrm{stab}}(x_2)<\frac{\mathbf{r}_{\mathrm{stab}}(x_1)}{\bar{K}} < \frac{\mathbf{r}_{\mathrm{stab}}(x_1)}{2K} < \frac{1}{2} 7\lambda K\mathbf{r}_{\mathrm{stab}}(x_1) <  \frac{1}{2} \bar{K}\mathbf{r}_{\mathrm{stab}}(x_1) < \frac{1}{2} \bar{r}$$ it follows
% from the definition of $\epsilon $ (see Subsection \ref{almostflat2}) 
that 
%$$\sup\{\dist_g\big(x,\partial B(x_2, \mathbf{r}_{\mathrm{stab}}(x_1)/K)\big)|\mathbf{A}(x)|; x\in \Sigma \cap B(x_2,\mathbf{r}_{\mathrm{stab}}(x_1)/K)\} \geq \epsilon.$$
 $ A\big(x_2,\frac{\mathbf{r}_{\mathrm{stab}}(x_1)}{2K}, 7\lambda K\mathbf{r}_{\mathrm{stab}}(x_1)\big)$ is a pointed $\delta$-conical annulus and by definition
$$A\big(x_2,\frac{\mathbf{r}_{\mathrm{stab}}(x_1)}{2},7\lambda \mathbf{r}_{\mathrm{stab}}(x_1)\big) \subset \mathfrak{C}.$$
 However, since $B(x_1,6\lambda \mathbf{r}_{\mathrm{stab}}(x_1) )$ intersects $B(x_2,3\lambda \mathbf{r}_{\mathrm{stab}}(x_2))$ and $\mathbf{r}_{\mathrm{stab}}(x_2)<\frac{\mathbf{r}_{\mathrm{stab}}(x_1)}{\bar{K}}$, 
$$\dist_g(x_1,x_2) \leq 6\lambda \mathbf{r}_{\mathrm{stab}}(x_1) +3\lambda \mathbf{r}_{\mathrm{stab}}(x_2)\leq (6\lambda +\frac{3\lambda}{\bar{K}})\mathbf{r}_{\mathrm{stab}}(x_1) \leq 7\lambda \mathbf{r}_{\mathrm{stab}}(x_1)$$ 
$$ \text{ and } \dist_g(x_1,x_2)\geq \mathbf{r}_{\mathrm{stab}}(x_1) - \lambda \mathbf{r}_{\mathrm{stab}}(x_2) \geq  \mathbf{r}_{\mathrm{stab}}(x_1)(1-\frac{\lambda}{\bar{K}})\geq \mathbf{r}_{\mathrm{stab}}(x_1)/2 .$$
 We are assuming that $x_1$ is not in $\mathfrak{C}$, so we conclude that actually $(i)$ of Theorem \ref{noconcentration} occurs here and we obtain two balls $b_1,b_2$ with radii belonging to 
 $$[2^{-({R}+1)}\mathbf{r}_{\mathrm{stab}}(x_1),2^{-{R}}\mathbf{r}_{\mathrm{stab}}(x_1)) \text{ for some integer $R\in [1000, \bar{R}]$},$$ whose centers are in $B(x_2, \frac{\bar{K}}{2} \mathbf{r}_{\mathrm{stab}}(x_1))$. To finish the proof of our lemma, we have to finally check that $$3\lambda b_1\cup 3\lambda b_2\subset B(x_0, 1.7\lambda \mathbf{r}_{\mathrm{stab}}(x_0)).$$ For that, we write 
\begin{align*}
\dist_g(x_0, \text{ any} & \text{ point of $3\lambda b_1$ or $3\lambda b_2$}) \\ & \leq 1.1\lambda \mathbf{r}_{\mathrm{stab}}(x_0) +\mathbf{r}_{\mathrm{stab}}(x_2) +\frac{\bar{K}}{2}\mathbf{r}_{\mathrm{stab}}(x_1) + 3\lambda  2^{-1000}\mathbf{r}_{\mathrm{stab}}(x_1) \\
 & \leq 1.1\lambda \mathbf{r}_{\mathrm{stab}}(x_0) + (\frac{1}{\bar{K}}  + \frac{\bar{K}}{2} + 3\lambda 2^{-1000}) \mathbf{r}_{\mathrm{stab}}(x_1) \\
 &  \leq 1.1\lambda \mathbf{r}_{\mathrm{stab}}(x_0)+ \frac{1}{\bar{K}}(\frac{1}{\bar{K}}  + \frac{\bar{K}}{2} + 3\lambda) \mathbf{r}_{\mathrm{stab}}(x_0) \\
 & \leq 1.1\lambda \mathbf{r}_{\mathrm{stab}}(x_0) + 0.6 \lambda \mathbf{r}_{\mathrm{stab}}(x_0) = 1.7 \lambda \mathbf{r}_{\mathrm{stab}}(x_0).
 \end{align*}
\end{proof}

\begin{remarque} \label{pourquo} By the proof of Theorem \ref{noconcentration}, in Corollary \ref{sotechnical} instead of $(ii)$, it is not hard to make sure that if $\mathbf{r}_{\mathrm{stab}}(x_1) \in [2^{-(k+1)},2^{-k}) $ for some integer $k$ then the radii of $b_1,b_2$ are in fact in $[2^{-(k+R+1)}, 2^{-(k+R)})$ for some integer $R\in [1000,\bar{R}]$.
\end{remarque}

\vspace{1em}

\subsection{Proof of Theorem \ref{maina} in dimensions $4$ to $7$} \label{splash}
Let $\Sigma\subset (M^{n+1},g)$ be a closed embedded minimal hypersurface, where $2\leq n\leq 6$, whose $n$-volume is bounded by $A>0$.
Let $F$ be a field. The Betti numbers $b^k(.)$ of a set are the dimensions over $F$ of the cohomology groups $H^{k}(.,F)$.
Denote by $\sharp$ the number of elements of a set.

Recall that the constants $\bar{r}$, $\lambda$, $\bar{A}$, $\delta$, $K$, the functions $\mathbf{f}(\Sigma)$, $\mathbf{r}_{\mathrm{stab}}$, the almost conical region $\mathfrak{C}$ were defined in Subsections \ref{folding}, \ref{stabrad}, \ref{almostflat2}, \ref{noconcentr}. 
Rescaling the metric by a large constant, we can of course assume 
$$\bar{r} = \frac{1}{2}.$$ 
%We will often say that a quantity only depends on $(M,g)$ when we mean that it only depends on $(M,g)$ and the constants $\bar{r}$, $\lambda$.

%Before beginning the proof, let us note that we can choose $\lambda$ large enough so that the following is true: if $x\in\Sigma\backslash \mathfrak{C}$, where $\mathfrak{C}$ is the almost flat region defined in Subsection \ref{almostflat1}, then $B(x,\mathbf{r}_{\mathrm{stab}}(x)) \cap \Sigma\backslash \mathfrak{C}$ is a union of open disks (whose topological boundaries are piecewise smooth). In general, by choosing $\lambda$ large enough (independently of $\Sigma$), we can guarantee that a finite intersection of subsets of the form $B(x,\mathbf{r}_{\mathrm{stab}}(x)) \cap (\Sigma \backslash \mathfrak{C})$ is a contractible open set.

We first look for a special cover of the closed subset $\Sigma\backslash \mathfrak{C}$. By Besicovitch's covering lemma, we can extract from $\{B(x,\mathbf{r}_{\mathrm{stab}}(x)); x\in \Sigma\backslash \mathfrak{C}\}$ some subfamilies of balls 
$$\mathcal{B}^{(1)},...,\mathcal{B}^{(c_1)}\subset \{B(x,\mathbf{r}_{\mathrm{stab}}(x)); x\in \Sigma\backslash \mathfrak{C}\}$$
such that each $\mathcal{B}^{(i)}$ consists of disjoint balls and 
\begin{equation}\label{goodcover}
\Sigma \backslash \mathfrak{C}  \subset \bigcup_{i=1}^{c_1}\bigcup_{b\in \mathcal{B}^{(i)}} b,
\end{equation}
where $c_1$ is an integer depending only on $(M,g)$.

Define the \emph{overlap} of a family of open sets $\{U_i\}$ in a manifold to be the maximal number of open sets of $\{U_i\}$ intersecting non-trivially a single element of $\{U_i\}$, namely
\begin{align*}
\text{overlap}(\{U_i\}) := \max\{P; \quad & \text{there are different $i_0,i_1,...,i_P$ such that} \\
& \text{ $U_{i_0} \cap U_{i_k} \neq \varnothing$ for all $k\in \{1,...,P\}$}\}.
\end{align*}

By (\ref{goodcover}) and (\ref{muuu}),
%and the properties of $\mathbf{r}_{\mathrm{stab}}$ reviewed in Subsection \ref{stabrad}, 
we obtain a family $\{b\}$ of open balls of the form $B(x,\mathbf{r}_{\mathrm{stab}}(x))$ covering the closed subset $\Sigma\backslash \mathfrak{C}$ with overlap at most $\mu'$ for some $\mu'=\mu'(M,g,\bar{r},\lambda)>0$. Recall that $\mathfrak{C}$ is a disjoint union of $(\delta,K)$-telescopes $T_1,...,T_L$. Because of Lemma \ref{almostflatT} $(iii) (iv)$, by adding $T_1,...,T_L$ to the family $\{b\}$, we obtain a larger family of open sets $\{b'\}$ with overlap still bounded by $\mu'$ (which may be taken larger). Recall that
$$\sharp \{b\} = \sum_{i=1}^{c_1} \sharp \mathcal{B}^{(i)}.$$
By Lemma \ref{almostflatT} $(iv)$, the number of $(\delta,K)$-telescopes $T_i$ is bounded by  $ 2\sum_{i=1}^{c_1} \sharp \mathcal{B}^{(i)} $.

By the curvature bound (\ref{ssbound}) and our choice of $\varepsilon_{\mathrm{SS}}$ in Subsection \ref{stabrad}, for all $x\in M$, the intersection $\Sigma\cap B(x, \mathbf{r}_{\mathrm{stab}}(x))$  is a union of convex $n$-disks. In particular, any finite intersection of such disks is contractible. Recall by Lemma \ref{almostflatT} $(i)$ that for each $(\delta,K)$-telescopes $T_i$, $\Sigma \cap T_i$ is diffeomorphic to $(\partial_+T_i \cap \Sigma) \times (0,1)$ where $\partial_+ T_i$ is one of the two components of $\partial T_i = \partial_+ T_i \cup \partial_- T_i$. Each component of $\partial_+T_i \cap \Sigma$ is diffeomorphic to the cross-section of a cone in $\mathcal{G}_{\beta_0}$ so the sum of the Betti numbers of  a connected component of $T_i \cap \Sigma$ is bounded from above by a number depending only on $(M,g), \lambda, \bar{r}, A$. Moreover by Lemma \ref{almostflatT} $(ii)$, intersections of the form
$$\Sigma \cap T_i \cap \bigcap_{j=1}^J B(x_j,\mathbf{r}_{\mathrm{stab}}(x_j))$$
are unions of $n$-disks.

\begin{definition} \label{sheet}
Let $N(\Sigma)$ be the number defined by 
$$N(\Sigma):=\max\{N; \exists p\in M, \Sigma\cap B(p,\mathbf{r}_{\mathrm{stab}}(p)) \text{ has $N$ components}\}.$$
We call this number the (stable) sheeting number.
\end{definition}

By the monotonicity formula, since $\Sigma$ has area bounded above by $A>0$, there is a positive constant $c_2$ only depending on $(M,g)$, $\lambda, \bar{r}, $ and $A$ such that the sheeting number satisfies 
\begin{equation} \label{gloo}
N(\Sigma)  \leq c_2.
\end{equation}
Similarly each $\Sigma \cap T_i$ admits a number of components bounded by $c_2$ by the monotonicity formula and properties of $\mathfrak{C}$.
% (in particular (\ref{review1})). 

From those remarks, the connected components of the intersections $\Sigma \cap b'$ (where $\{b'\}$ is as above) form a cover of $\Sigma$ by open $n$-dimensional subsets $\{U_j\}$ contained in $\Sigma$ with overlap at most $\mu'$, such that any $U_j$ has Betti numbers bounded independently of $\Sigma$, such that
$$\sharp\{U_j\} \leq c_3 \sum_{i=1}^{c_1} \sharp \mathcal{B}^{(i)} $$
for a constant $c_3$ only depending on $(M,g)$, $\lambda$, $\bar{r}$, $A$,
and such that any finite intersection of more than one element $U_{j_1}\cap...\cap U_{j_l}$ is contractible or empty.  
We can suppose that $\sharp \mathcal{B}^{(1)}$ is at least as large as any other $\sharp \mathcal{B}^{(i)}$.
Now we apply the topological result Lemma \ref{lemmetopo} in the Appendix to estimate the Betti numbers of $\Sigma$ in terms of the number of open sets $U_j \subset \Sigma$: there is $c_4$ depending on $(M,g), \lambda, \bar{r}, A$ but not on $\Sigma$ such that

\begin{equation} \label{sheetinequality}
\sum_{i=0}^n b^i(\Sigma) \leq   c_4 \sharp \mathcal{B}^{(1)}.
\end{equation}

In view of this inequality, the goal of the end of our proof is to show that there is $c_5$ depending only on $(M,g)$, the area bound $A$, and $\lambda, \bar{r},K, \delta, \bar{A}$ but independent of $\Sigma$ so that
\begin{equation}\label{goalbis}
%\sum_{k\geq0} \sharp \mathcal{B}_k 
\sharp \mathcal{B}^{(1)} \leq c_5 (\mathbf{f}(\Sigma)+1),\end{equation}
since the folding number satisfies
%, from the definitions, 
%$\lambda>1$ and 
$\mathbf{f}(\Sigma)\leq \ind(\Sigma)$ (see Subsection \ref{fofo}). Inequality (\ref{goalbis}) is enough to conclude the proof of Theorem \ref{maina} and we postpone its proof to the next Subsection.

\vspace{1em}

\subsection{The combinatorial argument} \label{combinatorial}

In this subsection, we present the main combinatorial argument which enables to prove Inequality (\ref{goalbis}) (see Corollary \ref{C:counting} and its application).

In what follows, we are assuming that $\bar{r} = \frac{1}{2}$. For any positive integer $k$, let $\mathcal{F}_k $ be the set of all geodesic balls $B(p,r)$ in $M$ of radius $r\in[2^{-(k+1)},2^{-k})$.

For each $k\geq 1$, let $$\mathcal{B}'_k\subset \mathcal{F}_k$$ be any family of balls of the form $B(x,\mathbf{r}_{\mathrm{stab}}(x))$ which are \textit{strongly disjoint}, in the sense that
%with $x\in M \backslash \mathfrak{C}$ 
for any integer $k$, and any two different balls $b,b'\in \mathcal{B}'_k$, 
\begin{equation}\label{ravel}
3\lambda b\cap  3\lambda b'=\varnothing.
\end{equation}

\vspace{1em}

\subsubsection{ Definition of basis}
Recall that, as outlined in Section  \ref{first}, we are about to apply an induction construction to $\bigcup_{k\geq 1} \mathcal{B}'_k$. At each step we want to discard some balls and add new ones in the family, by applying the so-called Property P[J] (see below).  It is useful to introduce a notion of basis for $\bigcup_{k\geq 1} \mathcal{B}'_k$; it is roughly defined as the subfamily of balls being locally the largest. Each step of the inductive construction will then be applied to a basis.

 If $\bigcup_{k\geq 1}\mathcal{B}'_k$ satisfies (\ref{ravel}), we define a \emph{basis} $\mathbf{B}$ of $\bigcup_{k\geq 1}\mathcal{B}'_k$ as follows. We proceed by induction, first we impose $A_1:=\mathcal{B}'_1$ to be included in $\mathbf{B}$. Then we choose a maximal subfamily $A_2$ of $\mathcal{B}'_2$ such that for any $b\in \mathcal{B}'_1$, $b'\in A_2$, we have $3\lambda b \cap 3\lambda b' =\varnothing$. Next we choose similarly a maximal subfamily $A_3\subset \mathcal{B}'_3$ such that for any $b\in  \mathcal{B}'_1\cup \mathcal{B}'_2$, $b'\in A_3$, we have $3\lambda b \cap3 \lambda b' =\varnothing$, etc. The union  $\mathbf{B} :=\bigcup_{k\geq 1}A_k$ forms what we call a basis of $\bigcup_{k\geq 1}\mathcal{B}'_k$. In other words, for any $i$, $\mathbf{B}\cap \mathcal{B}_i$ is a maximal subset of $\mathcal{B}_i$ such that for any $j<i$:
$$\forall b \in \mathbf{B}\cap \mathcal{B}_i, \forall b'\in \mathcal{B}_j, 3\lambda b\cap 3\lambda b' = \varnothing.$$

\vspace{1em}

\subsubsection{Definition of Property P[J]}

It is useful to introduce the following property for $(\bigcup_{k\geq 1}\mathcal{B}'_k,\mathbf{B})$.  \newline

%Then, by using a cover of $M\backslash \mathfrak{C}$ and \v{C}ech cohomology again, we eventually get a family of balls $\bigcup_{k\geq 0}\mathcal{B}_k$ such that for a constant $c_1$ depending only on $(M,g)$ and $A$:
%$$\sum_{i=0}^n b^i(\Sigma) \leq c_1 \sum_{k\geq 0} \sharp \mathcal{B}_k.$$
%It is enough to show that for a $c_5$ depnding only on $(M,g)$ and $A$,
%$$\sum_{k\geq 1} \sharp \mathcal{B}_k \leq c_2 \mathbf{f}(\Sigma).$$
%At that point, we introduce the same Property P[J] that we recall here:
%let $\bigcup_{k\geq 1}\mathcal{B}'_k$ (where $\mathcal{B}'_k\subset \mathcal{F}_k$) be a finite family of balls of the form $B(x,\mathbf{r}_{\mathrm{stab}}(x))$ with $x\in M\backslash \mathfrak{C}$ such that for any $b,b'\in \mathcal{B}'_k$,
%$$3\lambda b \cap 3\lambda b'=\varnothing.$$
       
\vspace{1em}

\textbf{Property P[J]:} Let $J$ be a  positive integer, let $\bigcup_{k\geq 1}\mathcal{B}'_k$ be a family satisfying (\ref{ravel}) and let $\mathbf{B}$ be a basis of $\bigcup_{k\geq 1}\mathcal{B}'_k$. %larger than $1000$. 

We say that $(\bigcup_{k\geq 1}\mathcal{B}'_k,\mathbf{B})$ satisfies Property {P[J]} if for any $k\geq1$, any $B\in \mathbf{B} \cap \mathcal{B}'_k$, one of the following occurs:
\begin{enumerate}
\item either the size of the set 
$$\{\hat{b}\in \mathcal{B}'_{k+u} ; u\geq0, 3\lambda \hat{b} \cap \lambda B \neq \varnothing\}$$ is bounded above by $J$,
\item or there are $v=v(B)>1000$, and two balls $b_B := B(x,\mathbf{r}_{\mathrm{stab}}(x))\in \mathcal{F}_{k+v}$, $b'_B:=B(x',\mathbf{r}_{\mathrm{stab}}(x'))\in \mathcal{F}_{k+v}$  (where $x,x'\in M$) with 
$$3\lambda b_B \cap 3\lambda b'_B =\varnothing\text{ and }3\lambda b_B \cup 3\lambda b'_B \subset 2\lambda B.$$ Moreover 
%there is a $U\geq v$ such that $\mathcal{B}'_{k+U}\neq \varnothing$, and 
the size of $$\{\hat{b}\in \mathcal{B}'_{k+u} ; 0\leq u\leq v, 3\lambda \hat{b} \cap (3\lambda b_B\cup 3\lambda b'_B) \neq \varnothing\}$$ is bounded above by $J$. \newline

\end{enumerate}

\vspace{1em}

\subsubsection{Property P[J] outside of $\mathfrak{C}$}
We check that for any family $\bigcup_{\geq1}\mathcal{B}'_k$ satisfying (\ref{ravel}) and with basis $\mathbf{B}$, if the center of any ball in $\bigcup_{\geq1}\mathcal{B}'_k\backslash \mathbf{B}$ avoids the almost conical region $\mathfrak{C}$, then $(\bigcup_{\geq1}\mathcal{B}'_k, \mathbf{B})$ has Property P[J] for an integer $J$ that does not depend on the minimal hypersurface $\Sigma$.

\begin{prop} \label{triplybis}
Let $3\leq n+1\leq 7$. There is a positive integer $J$ depending only on $(M^{n+1},g)$, on $A>0$ and on the constants  $\bar{r}$, $\lambda$, $\bar{A}$, $\delta$, $K$, $\bar{K}$, such that the following is true. 
Let $\Sigma\subset (M,g)$ be a closed embedded minimal hypersurface of $n$-volume at most $A$. For each $k\geq 1$, let $\mathcal{B}'_k\subset \mathcal{F}_k$ be a family of balls of the form $B(x,\mathbf{r}_{\mathrm{stab}}(x))$ satisfying (\ref{ravel}),
%such that for any two different balls $b,b'\in \mathcal{B}'_k$, 
%$$3\lambda b\cap  3\lambda b'=\varnothing,$$
and let $\mathbf{B}$ be a basis of $\bigcup_{k\geq1}\mathcal{B}'_k$. Assume additionally that any ball $b=B(x,\mathbf{r}_{\mathrm{stab}}(x))\in \bigcup_{k\geq1}\mathcal{B}'_k \backslash \mathbf{B}$ is centered at $x\in M\backslash \mathfrak{C}$.

Then $(\bigcup_{\geq1}\mathcal{B}'_k,\mathbf{B})$ satisfies Property P[J].
\end{prop}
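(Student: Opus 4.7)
My plan is to establish Property P[J] for each basis ball $B\in \mathbf{B}\cap \mathcal{B}'_k$ with center $x_0$ by a dichotomy: letting
\[
S_B := \{\hat b\in \mathcal{B}'_{k+u}:\ u\geq 0,\ 3\lambda\hat b\cap\lambda B\neq\varnothing\},
\]
I take case (1) when $|S_B|\leq J$, and otherwise force case (2) by producing a triple $(x_0,x_1,x_2)$ to which Corollary \ref{sotechnical} applies. A preliminary observation: any $\hat b\in S_B\setminus\{B\}$ must lie outside $\mathbf{B}$, because if $\hat b\in \mathbf{B}\cap\mathcal{B}'_{k+u}$ with $u\geq 1$ then the maximal construction of the basis gives $3\lambda\hat b\cap 3\lambda B=\varnothing$ and hence $3\lambda\hat b\cap \lambda B=\varnothing$, contradicting $\hat b\in S_B$. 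By the proposition's hypothesis, the centers of all balls in $S_B\setminus\{B\}$ therefore avoid $\mathfrak{C}$. The area bound on $\Sigma$ and the monotonicity formula also yield a scale-wise packing estimate $|S_B\cap \mathcal{B}'_{k+u}|\leq C_1 2^{un}$ with $C_1=C_1(M,g,A,\lambda,\bar A)$, so by choosing $J$ large I am guaranteed the existence of balls in $S_B$ at arbitrarily deep scales (up to a threshold depending on $J$).

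Suppose $|S_B|>J$. First I pick $\hat b_1\in S_B$ at scale $k+u_1$ with $u_1\geq \lceil\log_2\bar K\rceil+1$ and let $x_1$ be its center; then $s_1(x_1)\leq s_1(x_0)/\bar K$, $x_1\notin\mathfrak{C}$, and the condition $B(x_0,1.1\lambda s_1(x_0))\cap B(x_1,s_1(x_1))\neq\varnothing$ is automatic from $\hat b_1\in S_B$. The hard step is to produce $x_2$ at scale $\geq k+u_1+\lceil\log_2\bar K\rceil+1$, close enough to $x_1$ that $B(x_1,6\lambda s_1(x_1))\cap B(x_2,3\lambda s_1(x_2))\neq\varnothing$, and satisfying the density-gap hypothesis $\Theta_g(x_2,2s_1(x_1))-\Theta_g(x_2,s_1(x_1))\leq \beta_2^{-1}$ of Corollary \ref{sotechnical}. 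I will handle this by iterative deepening: I build a chain $y_0=x_0,y_1=x_1,y_2,\ldots,y_N$ of centers of balls in $S_B$, each $y_{i+1}$ at depth at least $\lceil\log_2\bar K\rceil+1$ below $y_i$ and geometrically nested so that $\dist(y_j,y_N)\ll s_1(y_{j-1})$ for every $j\leq N$. At each stage I check whether the triple $(x_0,y_i,y_{i+1})$ meets the density gap; if it does, I stop. Otherwise, by the nesting I transfer each failed gap at $y_{i+1}$ to the common deepest point $y_N$ via the elementary estimate $|\Theta_g(y_N,r)-\Theta_g(y_{i+1},r)|\lesssim \bar A\,\dist(y_{i+1},y_N)/r$, which is much smaller than $\beta_2^{-1}$ when $\bar K$ is large. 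The failed gaps then contribute at $y_N$ a gap exceeding $\beta_2^{-1}/2$ at each of the distinct dyadic scales $s_1(y_0),\ldots,s_1(y_{N-1})$, and the monotonicity bound $\Theta_g(y_N,1)\leq \bar A/2$ forces $N\leq \bar A\beta_2+O(1)$. Thus the iteration terminates in boundedly many steps, provided $|S_B|>J$ is large enough in terms of $\bar A\beta_2$, $\bar K$, and $C_1$ to supply enough clustered deep candidates.

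Once the triple $(x_0,x_1,x_2)$ is in hand, Corollary \ref{sotechnical} yields strongly disjoint balls $b_B,b'_B$ of the form $B(x,s_1(x))$ with equal radius in $[2^{-(\bar R+1)}s_1(x_1),2^{-1000}s_1(x_1))$, hence at a common scale $k+v$ with $v\in[u_1+1000,u_1+\bar R+1]>1000$, and with $3\lambda b_B\cup 3\lambda b'_B\subset B(x_0,1.7\lambda s_1(x_0))\subset 2\lambda B$. For the size bound in case (2), the set $\{\hat b\in \mathcal{B}'_{k+u}:0\leq u\leq v,\ 3\lambda\hat b\cap(3\lambda b_B\cup 3\lambda b'_B)\neq\varnothing\}$ is bounded by a direct disjoint-packing count over the bounded range $0\leq u\leq v$ inside the bounded set $3\lambda b_B\cup 3\lambda b'_B$, yielding a bound depending only on $v\leq (\bar A\beta_2+C)\lceil\log_2\bar K\rceil+\bar R+1$ and the standard parameters. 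Taking $J$ larger than this packing bound and the thresholds arising in the previous steps yields the uniform $J$ I need. The main obstacle is the iterative density-transfer argument in the middle paragraph, both in verifying that failed gaps genuinely accumulate at a single point and in sustaining the chain construction from the counting bound on $|S_B|$.
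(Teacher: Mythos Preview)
Your overall architecture is right --- dichotomy on $|S_B|$, monotonicity to bound the number of ``bad'' density scales, then Corollary~\ref{sotechnical} --- and your observation that every $\hat b\in S_B\setminus\{B\}$ lies outside $\mathbf{B}$ (hence is centered off $\mathfrak{C}$) is correct and used in the paper as well. The density--transfer argument and the termination bound $N\lesssim \bar A\beta_2$ are also essentially the pigeonhole the paper uses in its Case~A.3.

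The genuine gap is the chain construction itself. You need each $y_{i+1}$ to be the center of a ball in $S_B$ at depth $\geq u_i+\lceil\log_2\bar K\rceil$ \emph{and} within $\sim 6\lambda s_1(y_i)$ of $y_i$; but membership in $S_B$ only says $3\lambda\hat b\cap \lambda B\neq\varnothing$, which gives no proximity to $y_i$ whatsoever. A large $|S_B|$ guarantees the existence of balls at many deep scales, but they may be scattered across $\lambda B$ rather than concentrated near $y_1$. In that scattered situation your iteration simply stalls after $y_1$: there is no mechanism in your outline to produce $y_2$, and Corollary~\ref{sotechnical} cannot be invoked because the hypothesis $B(x_1,6\lambda s_1(x_1))\cap B(x_2,3\lambda s_1(x_2))\neq\varnothing$ fails. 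You flagged ``sustaining the chain construction'' as the main obstacle, and it is a real one, not a technicality.

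The paper resolves exactly this by an explicit case split \emph{before} attempting the density argument. It first selects $m\approx \bar A\beta_2$ balls $\hat b_1,\dots,\hat b_m$ at well--separated scales. If some $\hat b_j$ is far from the center of $B$ (Case~A.1), or if the $\hat b_j$'s are not all nested around the deepest one $\hat b_m$ (Case~A.2), then one has two points with small $s_1$ that are far apart on the scale of the larger; the continuity of $s_1$ (Lemma~\ref{continui}) along two disjoint paths back to $B$ then produces directly two points $z',z''$ with $s_1(z')=s_1(z'')$ and $3\lambda B(z',s_1(z'))\cap 3\lambda B(z'',s_1(z''))=\varnothing$, giving $b_B,b'_B$ without any appeal to Corollary~\ref{sotechnical}. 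Only in the fully nested configuration (Case~A.3: all $\hat b_i\cap 1.1\lambda B\neq\varnothing$ and $6\lambda\hat b_i\cap 3\lambda\hat b_m\neq\varnothing$) does the paper run the monotonicity pigeonhole at the single point $y_m$ and feed the resulting triple into Corollary~\ref{sotechnical} --- this is your argument, but with nesting now a verified hypothesis rather than an assumption. Your proposal would become correct if you inserted this spread--out/nested dichotomy and the path--continuity construction for the spread--out branch.
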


\begin{proof} 
Recall the definitions of the constants $\bar{r} =  \frac{1}{2}$, $\bar{A}$, $\lambda$, $\delta$,  $K>2^{1000}$, $\bar{K} = 30\lambda K^2$ (Subsections  \ref{stabrad}, \ref{almostflat2}, \ref{noconcentr}).
Before choosing the integer $m$, we make the following elementary remark: for any point $p\in (M,g)$, at scale smaller than $\bar{r}$, the volume ratio $\Theta_g(x,r)$ is not exactly nondecreasing with respect to the radius $r$ in general, but it is almost non-increasing by the monotonicity formula: without loss of generality we can rescale the metric $g$ with a large enough constant depending only on $(M,g)$, and the quantity $(1+cr)\Theta_g(x,r)$ is nondecreasing at scale less than $\bar{r}$ for a universal constant $c$. 
Now fix $m$ a large integer depending on $\bar{A}$, so that for any $p\in M$, and any radii $r_1,...,r_{m-1},r_{m} \in (0,\bar{r})$ with $r_i> \bar{K} r_{i+1}$ for $i=1,...,m-1$, there exists $j\in\{1,...,m-1\}$ such that
$$\Theta_g(p,10\lambda K r_j) -\Theta_g(p,\frac{1}{3K}r_j) \leq 1/\beta_1.$$
where $\beta_1$ is given in Theorem \ref{noconcentration}. The existence of such $m$ follows from almost monotonicity and the pigeon hole principle.

\vspace{1em}

Let $\Sigma$, $\bigcup_{\geq1}\mathcal{B}'_k$, $\mathbf{B}$ be as in the statement. Let $B\in \mathbf{B}\cap \mathcal{B}'_k$ for some $k\geq 1$. We denote the radius of a ball by $\rad$. The following simple facts are similar. \newline

\textbf{Fact 1:} For any $1\leq k'\leq k''$, for any ball $X \in \mathcal{F}_{k''}$, there is an upper bound $\Upsilon_1>0$, depending only on $(M,g)$ and the constants, for the size of any family of balls $\{ \hat{b}_i\}\subset \mathcal{B}'_{k'}$ such that
$$\forall i,\quad 3\lambda \hat{b}_i \cap  3\lambda X\neq \varnothing.$$ \newline

\textbf{Fact 2:} There is an upper bound $\Upsilon_2>0$, depending only on $(M,g)$ and the constants, for the size of any family of balls $\{\hat{b}_i\}$ such that 
$$\forall i, \quad \hat{b}_i  \in \mathcal{B}'_{k+u}, 0\leq u \leq \log_2(2\bar{K}), \quad  3\lambda \hat{b}_i \cap  \lambda B\neq \varnothing.$$ 
\newline

%Fact 2: The function $\mathbf{r}_{\mathrm{stab}}$ is continuous hence for any continuous path $\gamma:[0,1]\to M$, $\mathbf{r}_{\mathrm{stab}}(\gamma(.))$ achieves all the values between $\mathbf{r}_{\mathrm{stab}}(\gamma(0))$ and $\mathbf{r}_{\mathrm{stab}}(\gamma(1))$.

%Fact 3: For $K$ large enough in the definition of $\mathfrak{C}$, the following is true. Let $R''\leq R'$. Let $B'\in \mathcal{F}_{k'}$ and let $ b=B(x,\mathbf{r}_{\mathrm{stab}}(x)), b'=B(x',\mathbf{r}_{\mathrm{stab}}(x'))$ be two balls of $\mathcal{F}_{k'+R''}$ contained in $2\lambda B'$ such that $3\lambda b\cap 3\lambda b'= \varnothing$ then $x,x'\in M\backslash \mathfrak{C}$.

%This proves the theorem for $J$ large under the assumption that the $\hat{b}_1,...,\hat{b}_m$ exist. 

Set $$J:=2002\Upsilon_1+ 2m(\log_2(2\bar{K})+1)\Upsilon_1 +2\bar{R}\Upsilon_1 +\Upsilon_2,$$
where $\bar{R}>1000$ is given in Theorem \ref{noconcentration}.

We can suppose that there is an integer $v_1> \log_2(2\bar{K})$ and a ball $\bar{b} \in \mathcal{B}'_{k+v_1}$ with $3\lambda\bar{b} \cap \lambda B\neq \varnothing$, since otherwise Case (1) of Property P[J] is true by Fact 2. We can assume $v_1$ to be the smallest integer satisfying these properties.
%Let $v_1$ be the smallest integer larger than $\log_2(2\bar{K})$ such that there is $\bar{b} \in \mathcal{B}'_{k+v_1}$ with $3\lambda \bar{b} \cap  \lambda B \neq \varnothing$. 
\newline

\textbf{Case A.} First suppose that the number of $u> 0$ such that there is a $\hat{b}\in \mathcal{B}'_{k+u}$ with $3\lambda\hat{b}\subset (2\lambda -2^{-1000})B$ is larger than $ m(\log_2(2\bar{K})+1)$. Here $(2\lambda -2^{-1000})B$ is the ball sharing the same center with $B$ but with radius $2\lambda-2^{-1000}$ times larger.

Write for convenience $k=u_0$. We can find integers $u_1,...,u_m$ with 
\begin{itemize}
\item $u_{i} > u_{i-1}+\log_2(2\bar{K})$ for $i=1,...,m$,
\item there is $\hat{b}_i \in \mathcal{B}'_{k+u_i}$ with $3\lambda \hat{b}_i \subset  (2\lambda-2^{-1000})B$,
\item for $u\in(u_{i-1}+\log_2(2\bar{K}) , u_i)$, there is no $\hat{b} \in \mathcal{B}'_{k+u}$ with $3\lambda \hat{b} \subset  (2\lambda-2^{-1000})B$. 
\end{itemize}
Note that, since for $u\geq \log_2(2 \bar{K})$, any $\hat{b}\in \mathcal{B}'_{k+u}$ with $3\lambda\hat{b}\cap \lambda B\neq \varnothing$ is such that $3\lambda\hat{b}\subset(2\lambda-2^{-1000})B$,
 we have $v_1\geq u_1$. Note also that
$$\rad(\hat{b}_1) %< \frac{\rad(B)}{ 2\bar{K}}
< \frac{\rad(B)}{\bar{K}} \text{ and }$$
$$\rad(\hat{b}_{i}) < \frac{\rad(\hat{b}_{i-1})}{\bar{K}}  \quad \text{ for all } i=2,...,m.$$ \newline

%(Here we need $\{u_i\}_{i=1}^m$ instead of just $u_1$ as in the proof of Proposition \ref{triply}.)

Case A.1.\newline 
Suppose that for some $j\in\{1,...,m\}$,
%for some $\hat{b}_1 \in \mathcal{B}'_{k+u_1}$ with $3\lambda \hat{b}_1 \subset  2\lambda B$,
$$\hat{b}_j \cap 1.1\lambda B=\varnothing.$$
Let $x$ (resp. $y$) be the center of $\hat{b}_j$ (resp. of the ball $\bar{b} \in \mathcal{B}'_{k+v_1}$ defined earlier). By definition of $\bar{b}$, the distance between the center of $B$ and the center $y$ of $\bar{b}$ is at most $\lambda \rad(B) + \frac{3\lambda}{\bar{K}} \rad(B)$. As for $\hat{b}_j$, the distance between the center of $B$ and the center $x$ of $\hat{b}_j$ is at least $1.1\lambda\rad(B)$. Thus the distance between $x$ and $y$ is at least $(0.1 - \frac{3}{\bar{K}})\lambda \rad(B) >6\lambda 2^{-1000} \rad(B)$ and there exists a continuous path $\gamma_x:[0,1]\to (2\lambda-2^{-1000})B$ (resp. $\gamma_y:[0,1]\to (2\lambda-2^{-1000})B$) such that $\gamma_x(0) = x$ (resp. $\gamma_y(0) = y$), the endpoints $\gamma_x(1)$, $\gamma_y(1)$ are in $B$ and $\dist_g(\gamma_x,\gamma_y) > 6\lambda 2^{-1000} \rad(B)$. 
Observe that by definition 
$$\mathbf{r}_{\mathrm{stab}}(\gamma_x(1))> \frac{\lambda-1}{\lambda} \rad(B) > 2^{-1000}\rad(B) $$
$$ \text{ and similarly } \mathbf{r}_{\mathrm{stab}}(\gamma_y(1))>2^{-1000}\rad(B),$$
while
$$\mathbf{r}_{\mathrm{stab}}(\gamma_x(0)) =\mathbf{r}_{\mathrm{stab}}(x) = \rad(\hat{b}_j) < \frac{\rad(B)}{\bar{K}} < 2^{-1000}\rad(B)$$
$$\text{ and similarly } \mathbf{r}_{\mathrm{stab}}(\gamma_y(0))<2^{-1000}\rad(B).$$
Then by Lemma \ref{continui} we can get two balls $b_B=B(x',\mathbf{r}_{\mathrm{stab}}(x')),b'_B=B(y',\mathbf{r}_{\mathrm{stab}}(y'))\subset 2\lambda B$ (with $x'\in \gamma_x([0,1]), y'\in \gamma_y([0,1])$) such that $b_B,b'_B\in \mathcal{F}_{k+1000}$ (in fact their radii $\mathbf{r}_{\mathrm{stab}}(x), \mathbf{r}_{\mathrm{stab}}(x')$ can be taken to be equal to $2^{-1000}\rad(B)\in [2^{-(k+1001)}, 2^{-(k+1000)})$), %$b_2=\bar{b}$ 
and
$$3\lambda b_B\cap 3\lambda b'_B =  \varnothing, \quad 3\lambda b_B\cup 3\lambda b'_B \subset 2\lambda B.$$
Moreover, the size of $$\{\hat{b} \in \mathcal{B}'_{k+u} ; 0\leq u\leq 1000, 3\lambda \hat{b} \cap(3\lambda b_B\cup 3\lambda b'_B) \neq \varnothing\}$$
is bounded above by $$1001.(2\Upsilon_1)$$ (see Fact 1)
so Case (2) of Property P[J] is satisfied.\newline

Case A.2.\newline 
Suppose that for all $i\in\{1,...,m\}$, 
$$\hat{b}_i \cap 1.1\lambda B\neq \varnothing.$$
Assume also that there is $j\in\{1,...,m-1\}$ such that 
$$6\lambda\hat{b}_j\cap 3\lambda\hat{b}_m = \varnothing.$$
We use an argument similar to the one in the previous paragraph. 
Let $x$ (resp. $y$) 
be the center of $\hat{b}_m$ (resp. $\hat{b}_j$). 
There exists a continuous path $\gamma_x:[0,1]\to (1.1\lambda+2^{-1000})B$ such that $\gamma_x(0) = x$, the endpoint $\gamma_x(1)$ is in $B$ and $\dist_g(\gamma_x, y) > 6\lambda \rad(\hat{b}_j) $. 
We have
$$\mathbf{r}_{\mathrm{stab}}(\gamma_x(1))> \frac{\lambda-1}{\lambda} \rad(B) > \rad(\hat{b}_j)$$
while 
$$\mathbf{r}_{\mathrm{stab}}(\gamma_x(0))<\rad(\hat{b}_j).$$
Then by Lemma \ref{continui} we can get a ball $b_B=B(x',\mathbf{r}_{\mathrm{stab}}(x')) \subset (1.1\lambda + 2^{-999})B$ (with $x'\in \gamma_x([0,1])$) such that 
$$\mathbf{r}_{\mathrm{stab}}(x') = \mathbf{r}_{\mathrm{stab}}(y) = \rad(\hat{b}_j) \in [2^{-(k+u_j+1)},2^{-(k+u_j)})$$
and if we set $b'_B:=\hat{b}_j$,
$$3\lambda b_B\cap 3\lambda b'_B = \varnothing, \quad 3\lambda b_B\cup 3\lambda b'_B \subset 2\lambda B.$$
Moreover, since $b_B,b'_B \subset (1.1\lambda + 2^{-999})B$, if a ball $Y$ has radius less than $2^{-(k+1000)}$ and $3\lambda Y\cap (3\lambda b_B \cup 3\lambda b'_B) \neq \varnothing$, then $3\lambda Y\subset (2\lambda-2^{-1000})B$.
Thus by Fact 1 and how we defined the $u_i$'s, the size of 
$$\{\hat{b} \in \mathcal{B}'_{k+u} ; 0\leq u\leq u_j, 3\lambda \hat{b} \cap(3\lambda b_B\cup 3\lambda b'_B) \neq \varnothing\}$$
is bounded above for instance by $$1001.(2\Upsilon_1) + m(\log_2(2\bar{K})+1).(2\Upsilon_1)$$
so Case (2) of Property P[J] is satisfied.\newline

Case A.3.\newline 
Suppose again that for all $i\in\{1,...,m\}$, 
$$\hat{b}_i \cap 1.1\lambda B\neq \varnothing$$
but this time assume additionally that for all $i \in \{1,...,m\}$, 
%any $\hat{b}'_i \in \mathcal{B}'_{k+u_i}$, 
$$6\lambda\hat{b}_i\cap 3\lambda\hat{b}_m\neq\varnothing.$$
% this is the case which differs from the proof of Proposition \ref{triply}. 
Let $y_i$ be the center of $\hat{b}_i$. For $i<i'$:
$$ \mathbf{r}_{\mathrm{stab}}(y_i)> \bar{K}\mathbf{r}_{\mathrm{stab}}(y_{i'}) .$$
By the choice of $m$, 
%the monotonicity formula and the pigeon hole principle, 
there exists $j\in\{1,...m-1\}$, 
$$\Theta_g(y_m,10\lambda K \mathbf{r}_{\mathrm{stab}}(y_j)) -\Theta_g(y_m,\frac{1}{3K}\mathbf{r}_{\mathrm{stab}}(y_j)) \leq 1/\beta_1.$$
Since $B$ is an element of the basis $\mathbf{B}$ and $\hat{b}_j \cap 1.1\lambda B\neq \varnothing$, $\hat{b}_j$ is not in this basis so it is centered outside of the almost conical region $\mathfrak{C}$ by assumption. We can thus apply Corollary \ref{sotechnical} and Remark \ref{pourquo} with $x_0$ being the center of $B$ and $x_1=y_j$, $x_2=y_m$.
There are two balls $b_1,b_2$ of the form $B(x,\mathbf{r}_{\mathrm{stab}}(x))$ with 
$$3\lambda b_1\cap 3\lambda b_2 =\varnothing, \quad 3\lambda b_1 \cup 3\lambda b_2\subset 1.7\lambda B,$$
such that $b_1,b_2\in \mathcal{F}_{k+u_j+R}$ for some integer $R\in [ 1000, \bar{R}]$ (here $\bar{R}$ is given in Theorem \ref{noconcentration}).
Note that if a ball $Y$ has radius less than $2^{-(k+1000)} $ and $3\lambda Y\cap (3\lambda b_1 \cup 3\lambda b_2) \neq \varnothing$, then $3\lambda Y\subset (2\lambda-2^{-1000})B$. By Fact 1 and how we defined the $u_i$'s, the size of 
$$\{\hat{b} \in \mathcal{B}'_{k+u} ; 0\leq u\leq u_j+R, 3\lambda \hat{b} \cap(3\lambda b_1\cup 3\lambda b_2) \neq \varnothing\}$$
is bounded above for instance by 
$$1001 .(2\Upsilon_1)  + m(\log_2(2\bar{K})+1).(2\Upsilon_1)  +\bar{R}.(2\Upsilon_1).$$
So Case (2) of Property P[J] is satisfied.\newline

\textbf{Case B.} Secondly, suppose that the number of indices $u>0$ such that there is a $\hat{b}\in \mathcal{B}'_{k+u}$ with $3\lambda\hat{b}\subset (2\lambda-2^{-1000})B$ is bounded above by $ m(\log_2(2\bar{K})+1)$. Let $u_1 < u_2<... < u_L$ be these indices, in particular $L\leq m(\log_2(2\bar{K})+1)$. \newline

Case B.1.\newline 
Suppose that for some $j\in\{1,...,L\}$, there is $\hat{b}_j\in \mathcal{B}'_{k+u_j}$ with
$$u_j\geq 1000,\quad \hat{b}_j \cap 1.1\lambda B=\varnothing.$$
Then by the arguments of Case A.1. in this proof, the second case of Property P[J] is satisfied.\newline

Case B.2.\newline 
Suppose that for all $j\in\{1,...,L\}$ with $u_j\geq 1000$, and for all $\hat{b}_j\in \mathcal{B}'_{k+u_j}$ with $\hat{b}_j\subset (2\lambda-2^{-1000})B$, one has
$$\hat{b}_j \cap 1.1\lambda B\neq \varnothing.$$
If for a certain $i\in \{1,...,L\}$ with $u_i\geq 1000$, there are two different balls $b_1, b_2\in \mathcal{B}'_{k+u_i}$ with 
$$3\lambda b_1\cap 3\lambda b_2= \varnothing, \quad b_1 \cap 1.1 \lambda B \neq \varnothing, \quad b_2 \cap 1.1\lambda B\neq \varnothing,$$
then we can set
$$b_B = b_1, b'_B = b_2. $$
Then clearly $3\lambda b_B\cup 3\lambda b'_B\subset 1.5 \lambda B\subset 2 \lambda B$. Again, note that if a ball $Y$ has radius less than $2^{-(k+1000)} $ and $3\lambda Y\cap (3\lambda b_1 \cup 3\lambda b_2) \neq \varnothing$, then $3\lambda Y\subset (2\lambda-2^{-1000})B$.
By Fact 1, the size of 
$$\{\hat{b} \in \mathcal{B}'_{k+u} ; 0\leq u\leq u_i, 3\lambda \hat{b} \cap(3\lambda b_B\cup 3\lambda b'_B) \neq \varnothing\}$$
is bounded by 
$$1001 .(2\Upsilon_1)+ m(\log_2(2\bar{K})+1) .(2\Upsilon_1).$$ Case (2) of Property P[J] is then satisfied.\newline

Case B.3.\newline 
Suppose again that for all $j\in\{1,...,L\}$ with $u_j\geq 1000$, and for all $\hat{b}_j\in \mathcal{B}'_{k+u_j}$ with $\hat{b}_j\subset (2\lambda-2^{-1000})B$, one has
$$\hat{b}_j \cap 1.1\lambda B\neq \varnothing.$$
The last case left is when for all $j\in \{1,...,L\}$ with $u_j\geq 1000$, there is at most one ball  $b_1\in \mathcal{B}'_{k+u_j}$ with 
$$\quad b_1 \cap 1.1 \lambda B \neq \varnothing.$$
Then by Fact 2, the size of 
$$\{ \hat{b} \in \mathcal{B}'_{k+u}; u\geq 0, 3\lambda \hat{b}\cap \lambda B \neq \varnothing\}$$
is bounded above by 
$$\Upsilon_2+m(\log_2(2\bar{K})+1)$$ and Case (1) of Property P[J] is then satisfied.

\end{proof}

\vspace{1em}

\subsubsection{The counting argument}

We prove the following proposition, which relies on a counting argument. We are still assuming $\bar{r} = \frac{1}{2}$.

\begin{prop} \label{doublybis} 
Let $\Sigma\subset (M,g)$ be a closed embedded minimal hypersurface of $n$-volume at most $A$. Let  $\bigcup_{k\geq 1}\mathcal{B}'_k$ be a family of balls of the form $B(x,\mathbf{r}_{\mathrm{stab}}(x))$ centered at $x\in M\backslash \mathfrak{C}$, satisfying (\ref{ravel}), and let $\mathbf{B}$ be a basis of  $\bigcup_{k\geq 1}\mathcal{B}'_k$ so that  $(\bigcup_{k\geq 1}\mathcal{B}'_k, \mathbf{B})$ satisfies Property P[J] for an integer $J>0$. Then there exists an integer $\mathcal{K}^{(0)}$ with  
\begin{equation} \label{double}
\mathcal{K}^{(0)}\geq \frac{1}{2J} \sum_{k\geq 1}\sharp\mathcal{B}'_k +\frac{1}{2} \sharp \mathbf{B},
\end{equation}
and a family of balls $\{b_i\}_{i=1}^{\mathcal{K}^{(0)}}$ so that:
\begin{itemize}
\item $\forall j\in\{1,...,{\mathcal{K}^{(0)}}\}$,  $b_j$ is of the form $B(x,\mathbf{r}_{\mathrm{stab}}(x))$ for some $x\in M$ and $\mathbf{r}_{\mathrm{stab}}(x)<\bar{r}$, and $\lambda b_j $ is included in a ball $2\lambda B$ where $B\in \bigcup_{k\geq 1}\mathcal{B}'_k$,
\item 
$\forall i\neq j\in\{1,...,{\mathcal{K}^{(0)}}\}$,  $\dist(\lambda b_i , \lambda b_j) >0.$
\end{itemize}
\end{prop}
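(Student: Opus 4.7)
The plan is to produce the family $\{b_j\}$ directly from Property P[J] applied to each basis ball, and then to obtain the counting inequality by averaging two simpler lower bounds on $K^{(0)}$. Partition $\mathbf{B}=\mathbf{B}_1\sqcup\mathbf{B}_2$ according to whether $B\in\mathbf{B}$ satisfies alternative (1) or alternative (2) of P[J]. For each $B\in\mathbf{B}_1$ I put $B$ itself into the output family, and for each $B\in\mathbf{B}_2$ I put the pair $(b_B,b'_B)$ provided by P[J]. Set $M:=\sharp\mathbf{B}$ and $N:=\sum_k\sharp\mathcal{B}'_k$; the resulting family has size $K^{(0)}=\sharp\mathbf{B}_1+2\sharp\mathbf{B}_2=M+\sharp\mathbf{B}_2\geq M$.

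The inclusion property ($\lambda b_j\subset 2\lambda B$ for some $B\in\bigcup_k\mathcal{B}'_k$) is trivial in case (1), while in case (2) it follows from the stronger $3\lambda b_B\cup 3\lambda b'_B\subset 2\lambda B$ of P[J]. For the disjointness of the $\lambda$-enlargements: two outputs arising from a single $B\in\mathbf{B}_2$ satisfy $3\lambda b_B\cap 3\lambda b'_B=\varnothing$ by P[J] and therefore have positive gap. Two outputs coming from distinct basis balls $B,B'$ are separated because the defining property of the basis is exactly $3\lambda B\cap 3\lambda B'=\varnothing$, and each output $b$ sits strictly inside $2\lambda B$ with a margin (the slack $\lambda\,\rad(b_B)$ coming from $3\lambda b_B\subset 2\lambda B$ in case (2) is enough to transfer the gap between $3\lambda B$ and $3\lambda B'$ to a positive gap between $\lambda b$ and $\lambda b'$).

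The main inequality $K^{(0)}\geq \tfrac{1}{2J}N+\tfrac{1}{2}M$ I deduce by averaging the two bounds $K^{(0)}\geq M$ (already established) and $K^{(0)}\geq N/J$. The latter reduces to the covering claim $N\leq JM$, which I would obtain by charging each $\hat b\in\bigcup_k\mathcal{B}'_k$ to a basis ball $B$ whose absorbed set $\mathcal{C}_B$ (the set of size at most $J$ singled out in P[J]) contains $\hat b$. Basis balls charge themselves; for $\hat b\in\mathcal{B}'_i\setminus\mathbf{B}$, the maximality in the basis construction supplies a ball $b\in\bigcup_{j<i}\mathcal{B}'_j$ with $3\lambda b\cap 3\lambda\hat b\neq\varnothing$, and tracing conflicts upward identifies a basis ball $B\in\mathbf{B}$ to which $\hat b$ can be assigned. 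Summing $\sharp\mathcal{C}_B\leq J$ over $B\in\mathbf{B}$ yields $N\leq JM$, and averaging with $K^{(0)}\geq M$ gives the required inequality.

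The hard part will be making the charging precise: basis-maximality only supplies $3\lambda B\cap 3\lambda\hat b\neq\varnothing$, which is weaker than the intersection with the smaller $\lambda B$ (needed in case (1) of P[J]) or with $3\lambda b_B\cup 3\lambda b'_B$ inside the scale window $[k,k+v]$ (needed in case (2)), and the conflict-chain from $\hat b$ to a basis ball degrades proximity through the triangle inequality. Handling this will require a scale-by-scale selection of the charging basis ball, exploiting the dyadic structure of the $\mathcal{F}_k$ and the fact that for a well-chosen $B$ one has $\rad(\hat b)\ll\rad(B)$, which upgrades the coarse intersection into the finer one demanded by P[J]. Once this bookkeeping is in place, the averaging step completes the proof.
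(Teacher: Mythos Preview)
Your approach has a genuine gap in the counting step, and it is not just bookkeeping. The claim $N\leq JM$ (equivalently, that every ball in $\bigcup_k\mathcal{B}'_k$ can be charged to some basis ball $B$ so that it lands in the set of size at most $J$ supplied by P[J]) is simply not true under the stated hypotheses. Look at what the absorbed sets $\mathcal{C}_B$ actually are: in case (1) only balls $\hat b$ with $3\lambda\hat b\cap \lambda B\neq\varnothing$ are counted, and in case (2) only balls at scales $k,\dots,k+v$ meeting $3\lambda b_B\cup 3\lambda b'_B$. Neither condition is implied by the chain of $3\lambda$-conflicts that basis maximality provides. Concretely, take a single basis ball $B$ at scale $k$ satisfying case (1), and a long chain $b_{k+1},b_{k+2},\dots$ with $b_j\in\mathcal{B}'_j$, $3\lambda b_{j}\cap 3\lambda b_{j-1}\neq\varnothing$, drifting so that $3\lambda b_j\cap\lambda B=\varnothing$ for $j$ large. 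Then $M=1$ while $N$ is arbitrarily large, P[J] holds trivially (the set in case (1) is tiny), and your output family has $K^{(0)}=1$, violating the desired inequality. Nothing in the hypotheses of the proposition (centers outside $\mathfrak{C}$, P[J]) rules this out.

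The paper deals with exactly this by \emph{recursion} rather than a single pass. After splitting $\mathbf{B}=\mathbf{B}'\sqcup\mathbf{B}''$, it removes the absorbed sets $\mathcal{W}'\cup\mathcal{W}''$ (at most $JL'+JL''$ balls), \emph{adds} the pairs $\{b_B,b'_B:B\in\mathbf{B}''\}$, and forms a new family $\mathcal{Z}$ whose smallest occupied scale is strictly larger than before. Proposition~\ref{triplybis} applies again to $\mathcal{Z}$ (the added balls $b_B,b'_B$ automatically enter the new basis, and the remaining balls are still centered off $\mathfrak{C}$), so one can invoke the induction hypothesis on $\mathcal{Z}$. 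The final output is the union of $\mathbf{B}'$ with the recursively constructed family for $\mathcal{Z}$; the arithmetic $K^{(0)}\geq K^{(1)}+L'\geq \tfrac{1}{2J}\sharp\mathcal{Z}+L''+L'\geq \tfrac{1}{2J}N+\tfrac12 L$ then closes. The essential point you are missing is that the pairs $b_B,b'_B$ are not terminal outputs but \emph{seeds for a new basis at deeper scales}, and it is this iterated re-basing that eventually accounts for every ball.
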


\begin{proof}

To find $\{b_i\}_{i=1}^{\mathcal{K}^{(0)}}$ and prove (\ref{double}), we use a top-down inductive argument on the smallest integer $k_1$ such that $\mathcal{B}'_{k_1}\neq \varnothing$. Necessarily, there is an integer $k_0$ such that for any point $x\in M$, $\mathbf{r}_{\mathrm{stab}}(x) \geq 2^{-({k_0}+1)}$ (the minimal hypersurface $\Sigma$ is fixed in this proof). In particular, $k_1\leq k_0$.

%No need of "depth" I think??
%that we define as follows. Let $v_1$ (resp. $v_2$) be the smallest (resp. largest) positive integer so that $\mathcal{B}'_k\neq \varnothing$. The depth of $\bigcup_{k\geq 1}\mathcal{B}'_k$ is the number $v_2-v_1$. 

%Note that for any $k\geq1$ and any geodesic ball $b$ with radius less than $2^{-k}<\bar{r}$, there is an upper bound depending on $(M,g)$ on the size of a family of geodesic balls $\{\hat{b}_i\}$ in $\mathcal{F}_k$ that intersect $b$ and satisfy $3\lambda\hat{b}_i \cap 3\lambda\hat{b}_{i'}=\varnothing$ whenever $i\neq i'$. Thus for a given $b \in \mathcal{F}_k$, there is an upper bound $q=q(J)$ depending only on $(M,g)$ and $J$ to the number of balls $\bar{b}$ in $\bigcup_{i=k}^J \mathcal{B}'_i$ such that $3\lambda \bar{b} \cap 3\lambda b\neq \varnothing$.
 
%Let $q>J$.
%We set $C_1(J):=\frac{1}{2J}$ and $C_2(J):=frac{1}{2}$. %We also impose $q$ large enough so that $\frac{3}{2}-\frac{2}{q}-\frac{1}{2q}>1-\frac{1}{q}$.
 
If $k_1=k_0$, i.e. if $\mathcal{B}'_{k_0}$ is the only non-empty $\mathcal{B}'_k$,  then the basis $\mathbf{B}$ is $\mathcal{B}'_{k_0}$ itself. Clearly one can take $\{b_i\}_{i=1}^{\mathcal{K}^{(0)}}$ to be $ \mathcal{B}'_{k_0}$, and
$$\mathcal{K}^{(0)} = \sharp\mathcal{B}'_{k_0} \geq \frac{1}{2J} \sum_{k\geq 1}\sharp \mathcal{B}'_{k}+ \frac{1}{2} \sharp \mathbf{B}.$$

Suppose that the proposition is true for any family $(\bigcup_{k\geq 1} \mathcal{B}'_k,\mathbf{B})$ satisfying the assumptions of the proposition, such that $k_1\in [ k_0-D,k_0]$, where $D$ is a nonnegative integer.  
Let $\bigcup_{k\geq 1} \mathcal{B}'_k$ be a family satisfying the assumptions of the proposition, with basis $\mathbf{B}$, such that $k_1=k_0-(D+1)$.
Let $\mathbf{B}'$ (resp. $\mathbf{B}''$ ) be the balls in $\mathbf{B}$ satisfying Case (1) (resp. satisfying Case (2) but not Case (1)) of Property P[J]. Write $L:=\sharp \mathbf{B}$, $L':=\sharp \mathbf{B}'$, $L'':=\sharp \mathbf{B}''$, so that $L=L'+L''$. For a ball $B\in \mathbf{B}''$, there are two balls $b_B, b'_B\in \mathcal{F}_{k+v}$ as in Case (2) of Property P[J]. 
%Note that Case (2) implies that for a fixed $B\in \mathbf{B}''$, the size of $ \{A\in \bigcup_{j=k}^{k+u}\mathcal{B}'_j ; 3\lambda A \cap (3 \lambda b_B \cup 3\lambda b'_B) \neq \varnothing\}$ is bounded by $4q$.

Set 
$$\mathcal{V}=\{b_B; \quad B\in \mathbf{B}''\} \cup \{b'_B; \quad B\in \mathbf{B}''\},$$
%$$\mathcal{V}'=\{b'_B\in \mathcal{F}_{k+v'}; \quad B\in \mathbf{B}''\cap \mathcal{F}_k, \forall \hat{b}\in \mathcal{B}'_{k+v'}, 3\lambda \hat{b}\cap 3\lambda b'_B =\varnothing \},$$
$$\mathcal{W}' := \{A ; \quad \exists B\in \mathbf{B}'\cap \mathcal{F}_k, \exists u\geq 0, A \in \mathcal{B}'_{k+u}, 3\lambda A \cap  \lambda B  \neq \varnothing\},$$
\begin{align*}
\mathcal{W}'' := \{ & A; \quad \exists B\in \mathbf{B}''\cap \mathcal{F}_k \text{ with corresponding } b_B, b'_B\in \mathcal{F}_{k+v},\\
 & \exists u\in \{0,1,...,v\} , A \in \mathcal{B}'_{k+u}, 3\lambda A \cap (3 \lambda b_B \cup 3\lambda b'_B)  \neq \varnothing\}.
\end{align*}
Note that $\mathbf{B}\subset  \mathcal{W}''\cup \mathcal{W}'$ and that Property P[J] implies that $\mathcal{W}'$ (resp. $\mathcal{W}''$) contains at most $JL'$ (resp. $JL''$) 
elements. Define also
$$\mathcal{Z}:= \mathcal{V}\cup 
%\mathcal{V}' \cup 
\bigcup_{k\geq 1}\mathcal{B}'_k \backslash (\mathcal{W}'\cup \mathcal{W}'')\quad \text{and } \mathcal{Z}_k:=\mathcal{Z}\cap \mathcal{F}_k.$$ 
 Let $\mathbf{B}_{\mathcal{Z}}$ be any basis of $\mathcal{Z}=\bigcup_{k\geq 1}\mathcal{Z}_k$. One can check that $\mathcal{V}\subset \mathbf{B}_{\mathcal{Z}}$ by definition of basis, since the elements of $\{3\lambda b; b\in \mathcal{V}\}$ are disjoint and since $\mathcal{Z}\cap \mathcal{W}'' = \varnothing$. Thus, the fact that $(\bigcup_{k\geq 1}\mathcal{Z}_k,\mathbf{B}_{\mathcal{Z}})$ satisfies property P[J] follows from Proposition \ref{triplybis} and our assumptions on $\bigcup_{k\geq 1}\mathcal{B}'_k$. 
Now the key point is that the smallest integer $k'_1$ such that $\mathcal{Z}_{k'_1}\neq \varnothing$ has to be strictly larger than $k_1=k_0-(D+1)$, since $ \mathcal{B}'_{k_1}\subset \mathbf{B}\subset  \mathcal{W}''\cup \mathcal{W}'$. We can thus apply the induction assumption to $\mathcal{Z}$.

 By construction, $\mathbf{B}_{\mathcal{Z}}$ has at least $2L''$ balls. Next, let $\{b^{(1)}_i\}_{i=1}^{\mathcal{K}^{(1)}}$ be a family of balls as in the statement but associated to $\bigcup_{k\geq 1}\mathcal{Z}_k$ by our induction assumption. By our hypothesis, 
$$\bigcup_{i=1}^{\mathcal{K}^{(1)}}\lambda b^{(1)}_i \subset \bigcup_{B\in \bigcup_{k\geq 1}\mathcal{B}'_k\backslash  \mathcal{W}'} 2\lambda B  \cup \bigcup_{B\in \mathcal{V}} 2\lambda B .$$
Notice that $\bigcup_{B\in \mathcal{V}} 2\lambda B\subset \bigcup_{B\in \bigcup_{k\geq 1}\mathcal{B}'_k\backslash  \mathcal{W}'} 2\lambda B$, because of Property P[J] and $\mathbf{B}'' \cap \mathcal{W}' =\varnothing$. So we have
$$\bigcup_{i=1}^{\mathcal{K}^{(1)}}\lambda b^{(1)}_i  \subset \bigcup_{B\in \bigcup_{k\geq 1}\mathcal{B}'_k\backslash  \mathcal{W}'} 2\lambda B \subset \bigcup_{B\in \bigcup_{k\geq 1}\mathcal{B}'_k} 2\lambda B,$$
$$\dist(\bigcup_{i=1}^{\mathcal{K}^{(1)}}\lambda b^{(1)}_i , \bigcup_{B\in \mathbf{B}'} \lambda B) >0.$$
Consequently, one sees that $\{b^{(1)}_i\}_{i=1}^{\mathcal{K}^{(1)}}\cup \mathbf{B}'$ satisfies the two bullets in the statement of the proposition and one can take the family $\{b_i\}_{i=1}^{\mathcal{K}^{(0)}}$ to be  $\{b^{(1)}_i\}_{i=1}^{\mathcal{K}^{(1)}}\cup \mathbf{B}'$.
%as in the statement, $\mathcal{K}^{(0)}$ is bounded below by the size of  $\{b^{(1)}_i\}_{i=1}^{\mathcal{K}^{(1)}}$  plus the size of $\mathbf{B}'$. 
Hence by induction
\begin{align*}
\mathcal{K}^{(0)} & = \mathcal{K}^{(1)} + L'\\
& \geq \frac{1}{2J}\sharp \mathcal{Z}+ \frac{1}{2} 2L'' + L' \\
& \geq \frac{1}{2J}(\sharp \bigcup_{k\geq 1}\mathcal{B}'_k  - JL'-JL'') +L'' + L' \\
& \geq  \frac{1}{2J} \sum_{k\geq 1}\sharp \mathcal{B}'_k - \frac{1}{2}L +L\\
& \geq \frac{1}{2J}\sum_{k\geq 1}\sharp \mathcal{B}'_k + \frac{1}{2}L 
\end{align*}
and (\ref{double}) is proved for families $(\bigcup_{k\geq 1} \mathcal{B}'_k,\mathbf{B})$ satisfying the assumptions of the proposition, such that $k_1 =  k_0-(D+1)$. By induction the proof is finished.
\end{proof}

Combining Proposition \ref{triplybis} and Proposition \ref{doublybis} yields the following key corollary:

\begin{coro} \label{C:counting}
Consider $(M^{n+1},g)$, $3\leq n+1\leq 7$, and the constants  $A$, $\bar{r}= \frac{1}{2}$, $\lambda$, $\bar{A}$, $\delta$, $K$ as above. There is a constant $\mathbf{c}\in(0,1)$ depending on all the previous data such that the following holds. Let $\Sigma\subset M$ be any closed embedded minimal hypersurface  of $n$-volume at most $A$, and let $\mathfrak{C}\subset M$ be the associated almost conical region. 
%By rescaling, one can suppose that $\bar{r}\in[1/2,1)$. 
Let $\{\bar{b}_j\}_{j=1}^Q$ be a family of $Q$ geodesic balls in $M$ so that
\begin{itemize}
\item each $\bar{b}_j$ is of the form $B(x, \mathbf{r}_{\mathrm{stab}}(x))$, with $\mathbf{r}_{\mathrm{stab}}(x) <\bar{r}$,
\item the balls $\bar{b}_j$ are pairwise disjoint,
\item each $\bar{b}_j$ is centered at a point of $M\backslash \mathfrak{C}$.
\end{itemize}
Then there exists an integer $\mathcal{K}^{(0)} \geq \mathbf{c} Q$ and a family of geodesic balls $\{{B}'_j\}_{j=1}^{\mathcal{K}^{(0)}}$ in $M$ so that
\begin{itemize}
\item each ${B}'_j$ is of the form $B(x,\lambda \mathbf{r}_{\mathrm{stab}}(x))$, with $\mathbf{r}_{\mathrm{stab}}(x) <\bar{r}$,
\item for any two different indices $j_1$ and $j_2$, we have $\dist({B}'_{j_1},  {B}'_{j_2}) >0$.
\end{itemize} 
In particular, the index and folding number satisfy 
$$\ind(\Sigma)\geq \mathbf{f}(\Sigma) \geq \mathbf{c}Q.$$
\end{coro}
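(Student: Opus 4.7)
The plan is to reduce Corollary~\ref{C:counting} to Propositions~\ref{triplybis} and~\ref{doublybis} by a dyadic refinement of the input family $\{\bar b_j\}_{j=1}^Q$. Since those two propositions already produce the output family $\{b_i\}_{i=1}^{K^{(0)}}$ with $\lambda b_i$ at pairwise positive distance, what remains is to repackage the $\bar b_j$ into families $\mathcal{B}'_k\subset\mathcal{F}_k$ satisfying hypothesis~(\ref{ravel}) with $\sum_k\sharp\mathcal{B}'_k$ proportional to $Q$ and all balls centered outside $\mathfrak{C}$.

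First I would partition the input by dyadic scale: for each $k\geq 1$ set $\mathcal{C}_k := \{\bar b_j : \bar b_j\in\mathcal{F}_k\}$, so that $\sum_k\sharp\mathcal{C}_k = Q$ and only finitely many $\mathcal{C}_k$ are nonempty. Within each scale I would extract greedily a maximal $3\lambda$-separated subfamily $\mathcal{B}'_k\subset\mathcal{C}_k$, i.e.\ $3\lambda b \cap 3\lambda b' = \varnothing$ for any two distinct $b,b'\in \mathcal{B}'_k$. The balls in $\mathcal{C}_k$ are pairwise disjoint with radii in $[2^{-(k+1)}, 2^{-k})$, so if $3\lambda\bar b \cap 3\lambda\bar b' \neq \varnothing$ for two such balls their centers lie within $6\lambda\cdot 2^{-k}$ of each other. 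Volume comparison in $(M,g)$ at scales dominated by $3\lambda\bar r<\injrad_M$ then gives a constant $C_0 = C_0(n,\lambda)$ bounding the number of pairwise disjoint balls of radius $\geq 2^{-(k+1)}$ that fit inside a geodesic ball of radius $7\lambda\cdot 2^{-k}$. Consequently each element of $\mathcal{B}'_k$ blocks at most $C_0$ members of $\mathcal{C}_k$, so the greedy selection obeys
\[
\sum_{k\geq 1}\sharp\mathcal{B}'_k \;\geq\; \frac{1}{1 + C_0}\sum_{k\geq 1}\sharp\mathcal{C}_k \;=\; \frac{Q}{1+C_0}.
\]

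By construction $\bigcup_{k\geq 1}\mathcal{B}'_k$ consists of balls $B(x,s_1(x))$ with $x\in M\setminus\mathfrak{C}$ and satisfies the intra-scale separation~(\ref{ravel}). Fix a basis $\mathbf{B}$ of $\bigcup_k\mathcal{B}'_k$; since every ball (in particular every non-basis one) is already centered outside $\mathfrak{C}$, Proposition~\ref{triplybis} applies and yields an integer $J$, depending only on $(M,g,A)$ and the fixed parameters, such that $(\bigcup_k\mathcal{B}'_k, \mathbf{B})$ satisfies Property~P[$J$]. Proposition~\ref{doublybis} then produces
\[
K^{(0)} \;\geq\; \frac{1}{2J}\sum_{k\geq 1}\sharp\mathcal{B}'_k \;\geq\; \frac{Q}{2J(1+C_0)} \;=:\; cQ,
\]
together with balls $\{b_i\}_{i=1}^{K^{(0)}}$ of the form $B(x,s_1(x))$ with $s_1(x)<\bar r$ such that the $\lambda b_i$ are pairwise at positive distance. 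Setting $B'_i := \lambda b_i$ produces the required family of enlarged balls. The folding-number bound follows by slightly dilating each $B'_i$ to $B(x_i,(1+\varepsilon)\lambda s_1(x_i))$ with $\varepsilon>0$ small enough to preserve pairwise disjointness: each dilated ball intersects $\Sigma$ in an unstable open set by the definition of $s_1(x_i)<\bar r$, giving $K^{(0)}$ disjoint unstable open subsets of $\Sigma$ and hence $\mathbf{f}(\Sigma)\geq K^{(0)}\geq cQ$, which combined with the inequality $\mathbf{f}(\Sigma)\leq \ind(\Sigma)$ from Subsection~\ref{fofo} finishes the proof. The only nontrivial step is the intra-scale packing estimate above; everything else is a direct appeal to Propositions~\ref{triplybis} and~\ref{doublybis}.
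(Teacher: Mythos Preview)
Your proof is correct and follows essentially the same route as the paper: partition the input balls by dyadic scale, extract within each scale a maximal $3\lambda$-separated subfamily using a packing/volume-comparison bound, then feed the result into Propositions~\ref{triplybis} and~\ref{doublybis}. Your handling of the folding-number conclusion via a slight dilation of the $\lambda b_i$ is also exactly the paper's argument (phrased there as ``any open set of $\Sigma$ containing the closure of $\Sigma\cap B'$ is unstable'').
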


\begin{proof}
Note that given $\{B'_j\}_{j=1}^{\mathcal{K}^{(0)}}$ as in the statement of the corollary, the conclusion for the folding number $\mathbf{f}(\Sigma) $ simply follows from the fact that if $B'=B(x,\lambda \mathbf{r}_{\mathrm{stab}}(x))$ with $\mathbf{r}_{\mathrm{stab}}(x)<\bar{r}$ then any open set of $\Sigma$ containing the closure of $\Sigma\cap B'$ is unstable, and so $\mathbf{f}(\Sigma) \geq \mathcal{K}^{(0)}$.

Let $\{\bar{b}_j\}_{j=1}^Q$ as in the statement.  We are assuming that $\bar{r} = 1/2$, and so the balls in $\{\bar{b}_j\}_{j=1}^Q$ have radii less than $1/2$.
To prove the corollary, first note that for a fixed $k$, given $b\in \mathcal{F}_k$, the maximal number $Q$ of disjoint balls  $\{b_1,...,b_Q\} \subset  \mathcal{F}_k$ such that 
$$\forall i\in \{1,...,Q\},\quad 3\lambda b_i \cap 3\lambda b \neq \varnothing$$
is uniformly bounded independently of $k$. Thus we can find, for each $k\geq 1$, a subfamily $\mathcal{B}'_k\subset \mathcal{F}_k\cap\{\bar{b}_j\}_{j=1}^Q$ such that 
\begin{equation}
\begin{split} \label{(23)}
\forall b,b'\in \mathcal{B}'_k,\quad 3\lambda b\cap 3\lambda b' =\varnothing,
\\
\text{and} \quad \sharp(\mathcal{F}_k\cap\{\bar{b}_j\}_{j=1}^Q) \leq {c_6} \sharp \mathcal{B}'_k ,
\end{split}
\end{equation}
where $c_6\in (0,1)$ depends only on $(M,g)$ and $\lambda$, and $\sharp$ is the cardinality. In other words, $ \bigcup_{k\geq 1}\mathcal{B}'_k$ satisfies (\ref{ravel}) and
 $$Q = \sharp\{\bar{b}_j\}_{j=1}^Q \leq c_5 \sum_{k\geq1}\sharp \mathcal{B}'_k .$$
Then conclude by applying Proposition \ref{triplybis} and Proposition \ref{doublybis} to the family $\bigcup_{k\geq 1}\mathcal{B}'_k$.

\end{proof}

 \vspace{1em}

 We finally apply this corollary to show (\ref{goalbis}).
 
\begin{proof}[Proof of Inequality (\ref{goalbis})]
We are assuming that $\bar{r}= \frac{1}{2}$. For any positive integer $k$, let $\mathcal{F}_k $ be the set of all geodesic balls $B(p,r)$ in $M$ of radius $r\in[2^{-(k+1)},2^{-k})$. Let $\mathcal{B}^{(1)}$ be the family of balls constructed in Subsection \ref{splash}. 
Note that the maximal number of disjoint balls in $\mathcal{F}_0$ is bounded above by a constant depending only on $(M,g)$. In particular, the size of $\mathcal{B}^{(1)} \cap \mathcal{F}_0$ is uniformly bounded  independently of the minimal hypersurface $\Sigma$.  Then applying Corollary \ref{C:counting} to the family  $\mathcal{B}^{(1)} \backslash \mathcal{F}_0$, the inequality (\ref{goalbis}) is proved and so is Theorem \ref{maina}.
\end{proof}

\section{Genus, Morse index and area in dimension $3$} \label{sectiongenus}

The goal of this section is to explain that in dimension $3$, the factor $C_A$ in Theorem \ref{maina} can be chosen to satisfy
$$C_A \leq C \area(\Sigma)$$
where $C$ is a constant depending on $(M,g)$ but \textit{not} on the area bound $A$ for the minimal surface $\Sigma$.
The proofs are similar to the case of higher dimensions, and we will point out the necessary modifications compared to Section \ref{sectionbetti}. The main difference is that a curvature estimate for stable minimal surfaces independent of the area is available, unlike in higher dimensions (\cite{Sc}). Moreover stable minimal surfaces or minimal surfaces with curvature bounds are much better understood in dimension $3$. For instance we will make use of the removable singularity theorem of Meeks-P\'{erez}-Ros (see Theorem \ref{mpr}) instead of the monotonicity formula.

\subsection{The almost flat region $\mathfrak{C}$} \label{almostflat1} \label{luyy}

We define a region which will play a similar role as the almost conical region defined in Subsection \ref{almostflat2}. We will still denote it by $\mathfrak{C}$ for simplicity. Let $\varepsilon_{\mathrm{S}}\in (0,1)$. Given a closed minimal surface $\Sigma$, let $\mathbf{r}_{\mathrm{stab}}$ be the stability radius associated to $\Sigma$ as defined in Subsection \ref{stabrad}. One essential difference with Section \ref{sectionbetti} is that there are curvature bounds independent of the area due to Schoen \cite{Sc}. If we suppose $\lambda>2$ large enough and $\lambda \bar{r}$ small enough (depending on $(M,g)$ and $\varepsilon_{\mathrm{S}}$, but not on the area of the minimal surface), for any closed embedded minimal surface $\Sigma$, for any $x\in M$ and any $y\in \Sigma\cap B(x,2\mathbf{r}_{\mathrm{stab}}(x))$,
\begin{equation} \label{sbound}
\mathbf{r}_{\mathrm{stab}}(x) |\mathbf{A}(y)|< \varepsilon_{\mathrm{S}}.
\end{equation}
As before, by choosing $\varepsilon_{\mathrm{S}}, \frac{1}{\lambda}, \bar{r}$ small enough (independently of $\Sigma$), for any finite family of points $x_1,...,x_J \in M$, the intersection 
\begin{equation} \label{huy}
\Sigma \cap \bigcap_{j=1}^J B(x_j,\mathbf{r}_{\mathrm{stab}}(x_j))
\end{equation}
is a union of open sets diffeomorphic to $2$-disks which are convex for the induced metric on $\Sigma$. 

Again, another important point due to the fact that $\mathbf{r}_{\mathrm{stab}}$ is $\frac{1}{\lambda}$-Lipschitz (Lemma \ref{continui}) is that, 
given $y\in M$, the maximal number of disjoint balls of the form $ B(x,\mathbf{r}_{\mathrm{stab}}(x))$ intersecting $B(y,\mathbf{r}_{\mathrm{stab}}(y))$ non-trivially is bounded above by a constant 
\begin{equation}\label{muuudeux}
\text{$\hat{\mu}=\hat{\mu}(M,g,\bar{r},\lambda)>0$.}
\end{equation}

There is a reverse inequality like (\ref{reverse1}): for a point $x$ if $\mathbf{r}_{\mathrm{stab}}(x) < \bar{r}$ then for an $\varepsilon'$
 depending on $(M,g)$, $\bar{r}$ but independent of $\Sigma$ 
\begin{equation} \label{reverse2}
\text{ there is $y'\in B(2\lambda\mathbf{r}_{\mathrm{stab}}(x))$ such that $\mathbf{r}_{\mathrm{stab}}(x) |\mathbf{A}(y')|>\varepsilon'$}
\end{equation}

\vspace{1em}

\subsubsection{Definition of pointed $\epsilon$-flat annuli}

Let $\epsilon>0$. Let $\Sigma\subset M$ be a closed embedded minimal surface. For $2s<t<\bar{r}$ and $p\in M$, we will say that $\Sigma\cap A(p,s,t)$ is \textit{$\epsilon$-flat} if: for all $r\in[s,t/2]$ and for any point $x\in \Sigma\cap A(p,r,2r)$, 
\begin{equation} \label{flllllat}
r |\mathbf{A}(x)| \leq \epsilon.
\end{equation}
We will say that $ A(p,s,t)$ is a \textit{pointed $\epsilon$-flat annulus} if 
$$\mathbf{r}_{\mathrm{stab}}(p)<s\leq \frac{1}{2} t < \frac{1}{2} \bar{r}$$
and if $\Sigma \cap A(p,s,t)$ is $\epsilon$-flat.

If $\epsilon, \bar{r}$ are small enough independently of $\Sigma$, and if $A(p,s,t)$ is a pointed $\epsilon$-flat annulus, the components of $\Sigma\cap A(p,s,t)$ are annuli and disks. In general $\Sigma\cap A(p,s,t)$ is composed of many almost flat sheets which do not have to intersect $\partial A(p,s,t)$ at an angle close to $\frac{\pi}{2}$. The reverse inequality (\ref{reverse2}) also implies that there is a point in $B(p,2\lambda s)$ with second fundamental form of norm at least $\frac{\varepsilon'}{s}$. As in (\ref{review1}), if $\epsilon$ is small, by the properties of $\mathbf{r}_{\mathrm{stab}}$, for any pointed $\epsilon$-flat annulus $A(p,s,t)$ and any $x$ such that
$$B(x, \mathbf{r}_{\mathrm{stab}}(x)) \cap \partial B(p,t) \neq \varnothing,$$
the stability radius is comparable to $t$:
\begin{equation} \label{review2}
C t \leq \mathbf{r}_{\mathrm{stab}}(x) \leq (1-\frac{1}{\lambda})^{-1}(\frac{t}{\lambda} +s)
%\frac{t}{C} \leq \mathbf{r}_{\mathrm{stab}}(x) \leq C t
\end{equation}
for some $C>0$ depending on $\lambda, \bar{r}$ but independent of $\Sigma$ or $t$ (we emphasize that here $C$ does not depend on an area bound for $\Sigma$).

\vspace{1em}

\subsubsection{Definition of $(\epsilon,K)$-telescopes} \label{defoftelescopes2}

Consider a parameter $K>1000+\lambda$. Define
$$\bar{\mathcal{A}}_{\epsilon}:=\{ A(p,s,2s); A(p,\frac{s}{K},Ks) \text{ is a pointed $\epsilon$-flat annulus}\},$$
$$\bar{\mathcal{A}}^{bis}_{\epsilon}:=\{ A(p,s,2s); A(p,\frac{100s}{K},\frac{Ks}{100}) \text{ is a pointed $\epsilon$-flat annulus}\}.$$
Those sets depend on $(M,g)$, $\Sigma$, $\lambda$, $\bar{r}$, $\epsilon$, $K$.

%Let $A(p_1,t_1,2t_1),...,A(p_m,t_m,2t_m)$ be annuli in $\bar{\mathcal{A}}^{bis}_{\epsilon}$ such that their boundaries $\partial A(p_i,t_i,2t_i)$ intersect $\Sigma$ transversely. We say that $T:=\bigcup_{i=1}^m A(p_i,t_i,2t_i)$ is an \textit{$(\epsilon,K)$-telescope} if for all $i=1,...,m_1$,
%$$B(p_i,t_i)\subset B(p_{i+1},t_{i+1})\subset B(p_i,2t_i)\subset B(p_{i+1},2t_{i+1}).$$
%Note that if $T$ is a telescope, $\partial T$ has two spherical components $\partial_-T$ and $\partial_+T$. Moreover if $\epsilon$ is small enough (which we will always suppose), then $\Sigma\cap T$ is diffeomorphic to a union of $2$-dimensional disks and annuli.

Let $A(z_1,r_1,2r_1),..., A(z_m,r_m,2r_m)$ be annuli in $\bar{\mathcal{A}}^{bis}_{\epsilon}$ such that their boundaries $\partial A(z_i,r_i,2r_i)$ intersect $\Sigma$ transversely. We say that 
$$T:= \bigcup_{i=1}^m A(z_i,r_i,2r_i)$$ is an \textit{$(\epsilon,K)$-telescope} if for all $i=1,...m-1$, 
\begin{equation}\label{inininbis}
\begin{split}
\overline{B}(z_i,r_i) & \subset B(z_{i+1},r_{i+1}) \\
\overline{B}(z_{i+1},r_{i+1}) & \subset B(z_i,2r_i)\\ 
\overline{B}(z_i,2r_i) & \subset B(z_{i+1},2r_{i+1})
\end{split}
\end{equation}
where $\overline{B}$ denotes the closure of $B$.
Note that if $T$ is a telescope $\partial T$ has two  components $\partial_-T:= \partial B(z_1,r_1)$ and $\partial_+T:= \partial B(z_m,2r_m)$. Moreover from the definitions, $T$ is strictly included in a ball of radius $\bar{r}$.

\subsubsection{Construction of the almost flat region $\mathfrak{C}\subset M$ associated to $\Sigma$} \label{defoftelescopes3}

The following lemma is very similar to Lemma \ref{almostflatT}:

\begin{lemme} \label{almostflat}

Let $(M^{3},g)$ be a closed $3$-manifold.
If the constants $\varepsilon_{S}, \frac{1}{\lambda}, \bar{r}$ are small enough and if $\frac{1}{K}, \epsilon$ are chosen small enough depending on the previous constants, then the following is true.

Let $\Sigma$ be a closed smoothly embedded minimal surface and let $\bar{\mathcal{A}}_{\epsilon}$ be defined as above.  
Then there exist disjoint $(\epsilon,K)$-telescopes $T_1,...,T_L$  at positive distance from one another, and 
\begin{equation} \label{tubb'}
\bigcup_{An\in \bar{\mathcal{A}}_{\epsilon}} An \subset \bigcup_{i=1}^L T_i.
\end{equation}
Moreover
\begin{enumerate} [label=(\roman*)]
\item for any $i \in \{1,...,L\}$, the intersection $\Sigma \cap T_i$ is diffeomorphic to a union of $2$-dimensional disks and annuli, 
\item if $\mathcal{R}\subset\Sigma$ is a compact subsurface of $\Sigma$ with $\mathbf{b}(\mathcal{R})$ boundary components such that $\Sigma\setminus \mathcal{R} \subset \bigcup_{i=1}^L T_i$, 
then $\genus(\Sigma) \leq \genus(\mathcal{R}) + \frac{\mathbf{b}(\mathcal{R})}{2}$,
%given any points  $x_1,...,x_J \in M$, the connected components of  $$\Sigma \cap T_i \cap \bigcap_{j=1}^J B(x_j, \mathbf{r}_{\mathrm{stab}}(x_j))$$ are diffeomorphic to $2$-disks ATTENTION c'est faux, on peut avoir des intersections annulaires !,
\item  for any $i \in \{1,...,L\}$, $T_i$ intersects at most $\mu$ disjoint balls in 
$$\{B(x,\mathbf{r}_{\mathrm{stab}}(x)); x\in M\backslash T_i\}$$ where $\mu= \mu(M,g,\bar{r},\lambda)>0$,
%and given $y\in M$, the ball $B(y,\mathbf{r}_{\mathrm{stab}}(y))$ intersects at most two of the $(\delta,K)$-telescopes $T_1,...,T_L$,
\item for any $x\in M$, there are at most $2$ distinct indices $i,i'\in \{1,...,L\}$ such that
$B(x,\mathbf{r}_{\mathrm{stab}}(x)) \cap T_i \neq \varnothing$ and $B(x,\mathbf{r}_{\mathrm{stab}}(x)) \cap T_{i'} \neq \varnothing$.
\end{enumerate}

\end{lemme}

\begin{proof} The existence of the $(\epsilon,K)$-telescopes is shown as in  the proof of Lemma \ref{almostflatT} since essentially only the combinatorial properties of annuli in $\bar{\mathcal{A}}^{bis}_{\epsilon}$ or $\bar{\mathcal{A}}_{\epsilon}$ are needed.

As for $(i)$,  given an annulus $A(p,t,2t)\in \bar{\mathcal{A}}^{bis}_{\epsilon}$, $\Sigma \cap  A(p,t/2,4t)$ satisfies the curvature bound (\ref{flllllat}) and $2t \leq \bar{r}$ by definition of the annulus. Rescale the metric so that $A(p,t,2t)$ has inner radius $1$ then any connected component of $\Sigma \cap A(p,t,2t)$ is arbitrarily close to being flat when $\epsilon$ is small enough. But if  $\bar{r}$ was chosen small enough the rescaled metric is very close to the flat metric, and the second fundamental forms of $\partial B(p,t)$ and $\partial B(p,2t)$ are uniformly close to being that of the unit sphere and the sphere of radius $2$ in the Euclidean $3$-space. By the implicit function theorem, $\Sigma \cap A(p,t,2t)$ is a union of $2$-disks and $2$-annuli whenever $\bar{r}$ and $\epsilon$ are small enough. This is enough to deduce $(i)$: the components of $\Sigma\cap T_i$ intersecting the lower boundary component $\partial_-T_i$ are annuli while the others are disks.

For $(ii)$, the genus of the compact surface $\mathcal{R}$ is defined by the genus of the closed surface obtained by gluing disks along the boundary circles of $\mathcal{R}$. Property $(ii)$ is a corollary of $(i)$.

The proofs of $(iii)$, $(iv)$ are as in Lemma \ref{almostflatT}. 

\end{proof}

We can now define the almost flat region (which depends on $\Sigma$).

\begin{definition} \label{2020june}

Given a closed embedded minimal surface $\Sigma$ in the closed $3$-manifold $(M,g)$, we define the almost flat region as
$$\mathfrak{C} := 
%\Sigma \cap 
\bigcup_{i=1}^L T_i,$$
where $ T_i$ are the $(\epsilon,K)$-telescopes given by Lemma \ref{almostflat}. 
\end{definition}

The choice of the telescopes in the definition of $\mathfrak{C}$ is not unique. From now on we fix $\bar{r}$, $\lambda$ as above, $\epsilon$ and $K>2^{1000}$ as in Lemma \ref{almostflat}.

\vspace{1em}

\subsection{Quantitative removable singularity theorem}  \label{mmpprr}
We present the removable singularity theorem of Meeks-P\'{e}rez-Ros \cite{MPR} in a more quantitative manner than the original statement. This will play the role of Theorem \ref{noconcentration} in dimension $3$. Recall that $\bar{r}$, $\lambda>2$, $\mathbf{r}_{\mathrm{stab}}$, $K$, $\epsilon$ were introduced before. In the statement of Theorem \ref{mpr}, 
we set $\bar{r}=\infty$ so that $\mathbf{r}_{\mathrm{stab}}$ can take values in $(0,\infty]$.

\begin{theo}[\cite{MPR}] \label{mpr}
There exist $\bar{K}\geq 30\lambda K^2$, $\mu>0$, and $\bar{R}>1000$ such that the following is true. Let $g$ be a metric $\mu$-close to the Euclidean metric in the $C^5$-topology on $B_{\text{Eucl}}(0,\bar{K})\subset \mathbb{R}^3$, and let 
$$(\Sigma, \partial \Sigma) \subset (B_{\text{Eucl}}(0,\bar{K}), \partial B_{\text{Eucl}}(0,\bar{K}))$$ be a compact embedded minimal surface with respect to $g$, such that
\begin{itemize}
\item $\mathbf{r}_{\mathrm{stab}}(0) < \bar{K}^{-1}$,
\item there is $y'\in B(0,7\lambda)$ with $\mathbf{r}_{\mathrm{stab}}(y')=1$.
\end{itemize}

Then 
\begin{enumerate} [label=(\roman*)]
\item either there are two balls $b_1=B(z',\mathbf{r}_{\mathrm{stab}}(z')), b_2=B(z'',\mathbf{r}_{\mathrm{stab}}(z''))$ such that 
$$z',z''\in B(0,\frac{\bar{K}}{2}), \quad  3\lambda b_1\cap 3\lambda b_2 =\varnothing,$$ 
$$\mathbf{r}_{\mathrm{stab}}(z') , \mathbf{r}_{\mathrm{stab}}(z'') \in [2^{-({R}+1)},2^{-R}) \text{  for some $R\in [1000,\bar{R}]$},$$
\item or $\Sigma \cap A\big(0,\frac{1}{2{K}}, 7\lambda {K}\big)$  is ${\epsilon}$-flat in the sense of Subsection \ref{almostflat1}.

\end{enumerate}

\end{theo}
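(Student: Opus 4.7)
The plan is to follow the proof of Theorem \ref{noconcentration} almost verbatim, replacing two ingredients that required an area bound (the Schoen–Simon curvature estimate and the monotonicity formula / smooth minimal cone limit) by their area-free analogues available in dimension $3$: Schoen's interior curvature estimate for stable minimal surfaces in $3$-manifolds \cite{Sc} and the removable singularity theorem for minimal laminations of Meeks–P\'{e}rez–Ros \cite{MPR}.

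Set $\bar{K} := 12\lambda K$ and, exactly as in (\ref{impos}), choose $\bar{R} := \log_2(3\lambda/\mathbf{s}) + 1000$ where $\mathbf{s} \in (0,1)$ is the path-separation scale depending only on $\lambda$ and $K$. Argue by contradiction: suppose for this $\bar{R}$ there exist a sequence of metrics $g_k \to g_{\text{Eucl}}$ in $C^3$ on $B_{\text{Eucl}}(0,\bar{K})$ and embedded minimal surfaces $\Sigma_k$ with respect to $g_k$, satisfying the two bullet hypotheses of the theorem, for which neither (i) nor (ii) holds. Reproduce the Fact from the proof of Theorem \ref{noconcentration} word for word: continuity of $s_1$ (Lemma \ref{continui}) together with the path-connection argument using $s_1(y'_k)=1$, $s_1(0) < \bar{K}^{-1}$, and the failure of (i), forces $s_1 \geq 2^{-\bar{R}-1}$ uniformly on $B_k(0,\bar{K}/2) \setminus B_k(0, \tfrac{1}{2K})$.

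Next, apply Schoen's estimate on each ball $B_k(x,s_1(x))$ to obtain uniform $C^{2}$ bounds on $\Sigma_k$ throughout this annular region, independently of $k$ and, crucially, independent of area. Since an area bound on $\Sigma_k$ is unavailable, smooth subsequential convergence to a single minimal surface may fail; however, the uniform curvature bound together with standard minimal lamination compactness extracts a subsequence converging to a minimal lamination $\mathcal{L}$ of the Euclidean annulus $A_{\text{Eucl}}(0, \tfrac{1}{2K}, 7\lambda K)$, with smooth convergence leaf by leaf on compact subsets. This is the step that replaces the direct smooth convergence used in Theorem \ref{noconcentration}.

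Now invoke the Meeks–P\'{e}rez–Ros removable singularity theorem \cite{MPR} at the puncture $0$: since the leaves of $\mathcal{L}$ have uniformly bounded curvature up to $0$ (which follows from $s_1 \geq 2^{-\bar{R}-1}$ on the annular region combined with Schoen's estimate), $\mathcal{L}$ extends to a minimal lamination of $B_{\text{Eucl}}(0, 7\lambda K)$, and the leaves are so regular at $0$ that $r|\mathbf{A}(x)| \to 0$ as $r = |x| \to 0$ uniformly. Transferring this back via smooth leaf-wise convergence, for $k$ large every point $x \in \Sigma_k \cap A(0, \tfrac{1}{2K}, 7\lambda K)$ satisfies $r|\mathbf{A}(x)| \leq \epsilon$, i.e.\ the surface is $\epsilon$-flat in the sense of Subsection \ref{almostflat1}. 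This contradicts the failure of (ii) and concludes the proof. The main difficulty, and the reason MPR is the right tool, is precisely the lack of an area bound: without it the limit object is only a lamination rather than a surface, and its regularity across the puncture is exactly what MPR delivers.
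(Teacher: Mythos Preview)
Your argument has a genuine gap at the step where you invoke MPR. With $\bar{K}$ fixed and the Fact from Theorem~\ref{noconcentration} reproduced verbatim, your uniform lower bound $s_1\geq 2^{-\bar R-1}$ holds only on the fixed annulus $B_k(0,\bar K/2)\setminus B_k(0,\tfrac{1}{2K})$, so the limiting lamination $\mathcal{L}$ lives only on $A_{\text{Eucl}}(0,\tfrac{1}{2K},7\lambda K)$. There is no puncture at $0$ to remove: the lamination is not defined on any punctured neighbourhood of the origin, and MPR simply does not apply. Your claim that the leaves have ``uniformly bounded curvature up to $0$'' is unsupported, since nothing in your setup controls $\Sigma_k$ inside $B_k(0,\tfrac{1}{2K})$.

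The paper handles this by a diagonal argument: instead of fixing $\bar K$, it assumes the theorem fails for \emph{every} choice of constants, so one can take $\bar K=2^k\to\infty$, $s_1(0)<2^{-k}$, and $\bar R=\bar R_k$ growing accordingly. Running the path argument at every dyadic scale $l\leq k$ yields the scale-invariant bound $|\mathbf{A}(y)|\leq C(1+1/|y|)$ on $B_k(0,2^{k-1})\setminus\{0\}$, and the limit is a lamination of all of $\mathbb{R}^3\setminus\{0\}$ with exactly the curvature growth MPR requires. Even then, MPR only says the singularity is removable; it does not directly give $\epsilon$-flatness. The paper argues further: a leaf $L$ must pass through $0$ (monotonicity plus $s_1(0)\to 0$), $L$ cannot be isolated of multiplicity one (Allard), so by Corollaries~3.4--3.5 of \cite{MPR} its closure is a flat plane, and then Xavier's half-space theorem forces every other leaf to be flat as well. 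Only this last step yields the $\epsilon$-flatness contradicting (ii). Both the growing-domain setup and the flatness chain (leaf through $0$ $\Rightarrow$ plane $\Rightarrow$ Xavier) are missing from your proposal.
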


\begin{proof}
The essential ingredient is the result of Meeks-P\'{e}rez-Ros \cite{MPR}, which has the role played by the monotonicity formula in the proof of Theorem \ref{noconcentration}.

If Theorem \ref{mpr} is supposed to be not true, we want to derive a contradiction. For all $k$ large so that $2^k\geq 30\lambda K^2$, let $g_k$ be a sequence of metrics on $B_{\text{Eucl}}(0,2^k)$ converging to the Euclidean metric on compact sets, and let $\Sigma_k$ be a sequence of smooth embedded compact minimal surfaces as in the statement, with possibly unbounded area, except that the first bullet is replaced by
$$\mathbf{r}_{\mathrm{stab}}(0) < \frac{1}{2^k}.$$
For clarity denote by $B_k(.,.)$ (resp. $B_g(.,.)$) a geodesic ball for the metric $g_k$ (resp. for a metric $g$). 
%If $k$ is large enough then observe that the definition of $\mathbf{r}_{\mathrm{stab}}$ implies
%$$B_k(y',1) \subset B_k(0,2^l-1)\backslash B_k(0,\frac{1}{2^l}).$$
%This assumption implies $\lambda \mathbf{r}_{\mathrm{stab}}(0)\leq 1$ for each $\Sigma_k$, which means that each $\Sigma_k$ intersects $B(0,1)$ ($g_k$-unit ball). 
Let 
$$\hat{r}_l:=\frac{1}{2^{l+1}},$$ this number satisfies the following. Given an integer $l$, for any metric $g$ close enough to the Euclidean one,  for any point $z\in B_g(0, 2^l)\backslash B_g(0,\frac{1}{2^l})$ there exist two points $u,v\in B_k(y',1) $ and two continuous paths $\gamma_{0,u}$ from $0$ to $u$, $\gamma_{z,v}$ from $z$ to $v$ so that
$$\gamma_{z,v}\subset B_g(0,2^l) \backslash B_g(0,\frac{1}{2^l})$$
and the $\hat{r}_l$-neighborhoods (with respect to $g$) of $\gamma_{0,u}$ and $\gamma_{z,v}$ are disjoint, contained in $B_g(0,2^l+1)$. 
Let $\bar{R}_l$ be defined as follows: 
$$\bar{R}_l = \log_2(\frac{3\lambda}{\hat{r}_l})+1000 \quad \text{(so that in particular $6\lambda 2^{-\bar{R}_l}<\hat{r}_l$ and $\bar{R}_l>1000$)}.$$
% and we can suppose it is larger than $1000$.
One can check that for two positive integers $l_1,l_2$, $\hat{r}_{l_1} = 2^{l_2-l_1}\hat{r}_{l_2}$ and $\bar{R}_{l_1} = \bar{R}_{l_2} - l_2+ l_1$.
Similarly to the proof of Theorem \ref{noconcentration}, one can prove that if $(i)$ does not occur to the sequences $g_k$ and $\Sigma_k$ with  $\bar{R} = \bar{R}_k$, then for any given $l$, for $k$ large enough:
\begin{equation} \label{kart}
\forall z\in B_k(0,2^{l})\backslash B_k(0,\frac{1}{2^l}), \quad \mathbf{r}_{\mathrm{stab}}(z) \geq 2^{-\bar{R}_l} = 2^{-l+1}.2^{-\bar{R}_1} .
\end{equation}

Recall that Schoen's $3$-dimensional curvature bounds \cite{Sc} (which are independent of the area) imply the following: for $k$ large, for $g_k$ and $\Sigma_k$, in any ball of the form $B(x,\mathbf{r}_{\mathrm{stab}}(x))$, one has
$$|\mathbf{A}(y)| \mathbf{r}_{\mathrm{stab}}(x) \leq \varepsilon_{\mathrm{S}}$$
(see beginning of Subsection \ref{almostflat1}). 
Using this curvature bound and (\ref{kart}), we conclude that if $(i)$ does not hold for $g_k$ and $\Sigma_k$ with $\bar{R}=\bar{R}_k$ then there is a constant $C$ independent of $k,l$ so that for any given $l>0$, for all $k$ large enough and for all $y\in B_k(0,2^{l})\backslash B_k(0,\frac{1}{2^l})$
\begin{equation} \label{bounddd}
|\mathbf{A}(y)| \leq C(\dist_{g_k}(0,y)+\frac{1}{\dist_{g_k}(0,y)}).
\end{equation}

As a consequence, $\Sigma_k$ subsequently converges smoothly outside of $0$ to a lamination $\mathcal{L}$ of $(\mathbb{R}^3\backslash \{0\},g_{\text{Eucl}})$. Since $\mathcal{L}$ still satisfies the curvature bound (\ref{bounddd}), by \cite{MPR}, $0$ is a removable singularity of $\mathcal{L}$. Since for $g_k$ and $\Sigma_k$, $\mathbf{r}_{\mathrm{stab}}(0) < 2^{-k}$, by monotonicity, $\mathcal{L}$ has a leaf $L$ passing through $0$. This leaf $L$ cannot be isolated and of multiplicity one, since by Allard's regularity theorem, the convergence would be smooth but it would contradict $\mathbf{r}_{\mathrm{stab}}(0) < 2^{-k}$ for $\Sigma_k$. Thus $L$ is either isolated of multiplicity greater than one, or non-isolated. By Corollaries 3.4 and 3.5 of \cite{MPR}, the closure $\bar{L}$ is a flat plane. By the half-space theorem for bounded curvature minimal surfaces of Xavier \cite{Xavier}, any other leaf of $\mathcal{L}$ is flat. But it means that for $k$ large, $\Sigma_k \cap B_k(0,7\lambda K) \backslash B_k(0,\frac{1}{2K})$  was ${\epsilon}$-flat, a contradiction.

\end{proof}

Let $(M^3,g)$, $\bar{r}$, $\lambda>2$, $\epsilon$, $K$ be as in the previous subsection, $\mu$, $\bar{R}>1000$, $\bar{K}\geq 30\lambda K^2$ as in Theorem \ref{mpr}. We assume that for any ball of radius $r$ less than $\bar{r}$ the metric $\frac{1}{r^2} g$ is $\mu$-close to a flat metric in the $C^5$-topology. The next lemma corresponds to Corollary \ref{sotechnical} and has the same proof, except that one replaces Theorem \ref{noconcentration} by Theorem \ref{mpr}.

\begin{coro}\label{quitetechnical}
Let $\Sigma\subset M^3$ be a closed smoothly embedded minimal surface. Consider $x_0,x_1,x_2\in M$ such that
\begin{itemize}
\item $\mathbf{r}_{\mathrm{stab}}(x_0) < {\bar{r}}$, $\mathbf{r}_{\mathrm{stab}}(x_1)\leq {\mathbf{r}_{\mathrm{stab}}(x_0)}{\bar{K}}^{-1}$, $\mathbf{r}_{\mathrm{stab}}(x_2) \leq {\mathbf{r}_{\mathrm{stab}}(x_1)}{\bar{K}}^{-1}$,
\item \begin{align*}
%& B(x_0,1.1\lambda \mathbf{r}_{\mathrm{stab}}(x_0)) \cap B(x_1,\mathbf{r}_{\mathrm{stab}}(x_1)) \neq \varnothing, \\ 
& B(x_0,1.1\lambda \mathbf{r}_{\mathrm{stab}}(x_0)) \cap B(x_2,\mathbf{r}_{\mathrm{stab}}(x_2)) \neq \varnothing, \\
& B(x_1,6\lambda \mathbf{r}_{\mathrm{stab}}(x_1) ) \cap B(x_2,3\lambda \mathbf{r}_{\mathrm{stab}}(x_2)) \neq \varnothing,
\end{align*}
\item $x_1$ is not contained in the almost conical region $\mathfrak{C}$.
\end{itemize}
Then there are two balls $b_1,b_2$ of the form $B(y,\mathbf{r}_{\mathrm{stab}}(y))$ such that 
\begin{enumerate} [label=(\roman*)]
\item $3\lambda b_1\cap 3\lambda b_2\neq \varnothing$, 
\item the radii of $b_1,b_2$  belong to $$[2^{-({R}+1)} \mathbf{r}_{\mathrm{stab}}(x_1),2^{-{R}} \mathbf{r}_{\mathrm{stab}}(x_1)) \text{ for some $R \in [1000, \bar{R}]$},$$ 
\item $3\lambda b_1\cup 3\lambda b_2 \subset B(x_0,1.7\lambda \mathbf{r}_{\mathrm{stab}}(x_0))$. 
\end{enumerate}
\end{coro}

\vspace{1em}

\subsection{Proof of Theorem \ref{maina} in dimension $3$} \label{mainapf}

The arguments are essentially the same as those in Subsection \ref{splash}. Let us go through it briefly and insist on the differences. Let $\Sigma \subset (M^3,g)$ be a closed embedded smooth minimal surface.
We will use the constants $\bar{r}= \frac{1}{2}$, $\lambda$, $\epsilon$, $K$, the functions $\mathbf{f}$, $\mathbf{r}_{\mathrm{stab}}$, and the almost flat region $\mathfrak{C}$ defined previously in this section.

By the discussion surrounding (\ref{huy}), a finite intersection of subsets of the form $B(x,\mathbf{r}_{\mathrm{stab}}(x)) \cap \Sigma$ is a union of convex $2$-disks.
%As a consequence the Euler characteristic of $\Sigma\backslash \mathfrak{C}$ is equal to or smaller than the Euler characteristic of $\Sigma$. In particular, 
%$$2\genus(\Sigma) \leq  2+ \sum_{i=0}^2b^{i}(\Sigma\backslash \mathfrak{C}).$$
Recall that we previously defined the sheeting number as follows:
\begin{definition}
The sheeting number of $\Sigma$ is the number $N(\Sigma)$ defined by 
$$N(\Sigma):=\max\{N; \exists p\in M, \Sigma\cap B(p,\mathbf{r}_{\mathrm{stab}}(p)) \text{ has $N$ components}\}.$$
\end{definition}
By monotonicity formula, for a constant $c_2$ depending on $(M,g)$, $\lambda, \bar{r}$ but not  on $\Sigma$ or its area,
\begin{equation} \label{il est tard}
N(\Sigma) \leq c_2 \area(\Sigma).
\end{equation}
%Similarly, the number of components in each $\Sigma \cap T_i$ is bounded by $c_2 N(\Sigma)$, where $T_i$ are the $(\epsilon,K)$-telescopes forming the almost flat region $\mathfrak{C}$.

%By the monotonicity formula, there is $c_1$ only depending on $(M,g)$ such that $N(\Sigma)\leq c_1 \area(\Sigma)$. 

As in Subsection \ref{splash}, starting with Besicovitch's covering lemma, we find families of balls
$$\mathcal{B}^{(1)},...,\mathcal{B}^{(c_1)} \subset \{B(x,\mathbf{r}_{\mathrm{stab}}(x));x\in \Sigma \setminus \mathfrak{C}\},$$
such that each $\mathcal{B}^{(i)}$ consists of disjoint balls, and 
$$\Sigma\setminus \mathfrak{C} \subset \bigcup_{i=1}^{c_1} \bigcup_{b\in \mathcal{B}^{(i)}} b,$$
where $c_1$ is an integer depending only on $(M,g)$. We can assume that $\sharp \mathcal{B}^{(1)}$ is at least as large as any other $\sharp \mathcal{B}^{(i)}$. 

By slightly enlarging the radius of each ball $b \in\mathcal{B}^{(1)}\cup ...\cup\mathcal{B}^{(c_1)}$ if necessary, we get a new ball $\tilde{b}$ such that
$$\mathcal{R} := \Sigma \cap \bigcup_{i=1}^{c_1} \bigcup_{b\in \mathcal{B}^{(i)}} \tilde{b}$$
is a compact surface with piecewise smooth boundary satisfying
$$\Sigma \cap \bigcup_{i=1}^{c_1} \bigcup_{b\in \mathcal{B}^{(i)}} b \subset \mathcal{R} \subset \Sigma \quad (\text{in particular $\Sigma\setminus\mathcal{R} \subset \mathfrak{C}$}).$$
The genus and number of boundary components of $\mathcal{R}$ are linearly controlled by its total Betti number over $\mathbb{R}$. Thus after  applying the topological result Lemma \ref{lemmetopo}  in the Appendix, we find
$$\genus(\mathcal{R}) +\frac{\mathbf{b}(\mathcal{R})}{2} \leq c_4 N(\Sigma) \sharp \mathcal{B}^{(1)}$$
where  $\mathbf{b}(\mathcal{R})$ denotes the number of boundary components of $\mathcal{R}$, and $c_4$ depends only on $(M,g)$, $\lambda$, $\bar{r}$ (independent of the area of $\Sigma$). By Lemma \ref{almostflat} $(ii)$ and (\ref{il est tard}), 
\begin{equation} \label{glad}
\begin{split}
\genus(\Sigma) &\leq c_4 N(\Sigma) \sharp \mathcal{B}^{(1)} \\
\genus(\Sigma) &  \leq c_4c_2 \area(\Sigma ) \sharp \mathcal{B}^{(1)}.
\end{split}
\end{equation}

%Next since the size of $\sharp \mathcal{B}_0$ is bounded above by a constant depending only on $(M,g)$, 
It remains to show the following inequality 
\begin{equation} \label{nujabes}
 \sharp \mathcal{B}^{(1)} \leq c_5 (\mathbf{f}(\Sigma)+1)
\end{equation}
for a constant $c_5$ depending only on $(M,g)$. Its proof is postponed to the next subsection.

\vspace{1em}

\subsection{The combinatorial argument in dimension $3$} \label{olpo}

The following version of Proposition \ref{triplybis} is true in dimension $3$ without the dependency on the area of $\Sigma$. We restate it because its proof is a bit different. Recall that Property P[J] was introduced in Subsection \ref{combinatorial}.

In what follows, we are assuming that $\bar{r}= \frac{1}{2}$. For any positive integer $k$, let $\mathcal{F}_k $ be the set of all geodesic balls $B(p,r)$ in $M$ of radius $r\in[2^{-(k+1)},2^{-k})$. Let $\mathcal{B}'_k$, the notion of basis, and Property [J] be defined as in Subsection \ref{combinatorial}.

\begin{prop} \label{triply}
There is a positive integer $J$ depending only on $(M^3,g)$ and on the constants $\bar{r}, \lambda, \epsilon, K$, such that the following is true. 
Let $\Sigma\subset (M,g)$ be a closed embedded minimal surface. For each $k\geq 1$, let $\mathcal{B}'_k\subset \mathcal{F}_k$ be a family of balls of the form $B(x,\mathbf{r}_{\mathrm{stab}}(x))$ such that for any two different balls $b,b'\in \mathcal{B}'_k$, we have
$$3\lambda b\cap  3\lambda b'=\varnothing,$$
and let $\mathbf{B}$ be a basis of $\bigcup_{k\geq1}\mathcal{B}'_k$. Assume additionally that any ball $b=B(x,\mathbf{r}_{\mathrm{stab}}(x))\in \bigcup_{k\geq1}\mathcal{B}'_k \backslash \mathbf{B}$ is centered at $x\in M\backslash \mathfrak{C}$.

Then $(\bigcup_{\geq1}\mathcal{B}'_k,\mathbf{B})$ satisfies Property P[J].
\end{prop}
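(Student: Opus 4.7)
The plan is to reproduce the case-by-case analysis of Proposition \ref{triplybis} almost verbatim, substituting Corollary \ref{quitetechnical} for Corollary \ref{sotechnical} at the single point where the density-drop hypothesis was needed. This substitution removes the pigeonhole-plus-monotonicity step and is what eliminates the dependence of $J$ on any area bound.

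First I would record the same two combinatorial packing bounds used in Proposition \ref{triplybis}: there is an integer $\Upsilon_1$, depending only on $(M,g)$ and the constants $\bar r,\lambda,\epsilon,K$, such that for any $1\leq k'\leq k''$ and any ball $X\in\mathcal{F}_{k''}$, at most $\Upsilon_1$ balls $\hat b\in\mathcal{B}'_{k'}$ satisfy $3\lambda\hat b\cap 3\lambda X\neq\varnothing$; and a bound $\Upsilon_2$ on the size of $\{\hat b\in\mathcal{B}'_{k+u}:0\leq u\leq 1000,\ 3\lambda\hat b\cap\lambda B\neq\varnothing\}$. Both follow from the strong disjointness condition (\ref{ravel}) and elementary volume comparison in $(M,g)$; in dimension $3$ they are manifestly independent of $\mathrm{area}(\Sigma)$. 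I would then set
$$J:=2002\,\Upsilon_1+2\bar R\,\Upsilon_1+\Upsilon_2,$$
with $\bar R$ as in Theorem \ref{mpr}. The absence of the term $m(\log_2(2\bar K)+1)\Upsilon_1$ present in the higher-dimensional proof is exactly why $J$ depends here only on the ambient geometry and the fixed constants.

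Fix $B\in\mathbf{B}\cap\mathcal{B}'_k$. If no ball at scale $k+v$ with $v>\log_2(2\bar K)$ has its $3\lambda$-blowup meeting $\lambda B$, then Fact 2 immediately gives Case (1) of Property P[J]. Otherwise, pick the smallest such $v_1$ and a corresponding ball $\bar b\in\mathcal{B}'_{k+v_1}$. I would then run the same split into Case A (many scales of balls nest inside $(2\lambda-2^{-1000})B$) and Case B (only finitely many do) as in Proposition \ref{triplybis}. The sub-cases A.1, A.2 and B.1, B.2, B.3 transfer word-for-word: in each one the ball pair $(b_B,b'_B)$ is produced using Lemma \ref{continui} applied to two continuous paths along which the stability radius interpolates between small and large values, and the overlap counts are bounded by sums of terms of the form $c\cdot\Upsilon_1$ and $\Upsilon_2$ exactly as before.

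The only substantive novelty, and the place I expect to be the main obstacle, is the Case A.3 analogue. In higher dimensions one used the monotonicity formula and pigeonhole across $m\approx\beta_2\bar A$ nested annuli to force a density drop $\leq\beta_2^{-1}$ before invoking Corollary \ref{sotechnical}. In dimension $3$, Corollary \ref{quitetechnical} has \emph{no} density-drop hypothesis—its proof rests on the Meeks--Pérez--Ros removable singularity theorem (Theorem \ref{mpr}) rather than on monotonicity. Consequently, as soon as we have two nested balls $\hat b_1\in\mathcal{B}'_{k+u_1}$ and $\hat b_2\in\mathcal{B}'_{k+u_2}$ with $u_2-u_1>\log_2(2\bar K)$, both meeting $1.1\lambda B$, with $6\lambda\hat b_1\cap 3\lambda\hat b_2\neq\varnothing$, maximality of the basis $\mathbf B$ in layer $k+u_1$ forces $\hat b_1\notin\mathbf B$ (otherwise $3\lambda\hat b_1\cap 3\lambda B\neq\varnothing$ would contradict the construction of $\mathbf{B}$), and the standing hypothesis on $\bigcup_k\mathcal{B}'_k\setminus\mathbf B$ then gives that $\hat b_1$ is centered outside $\mathfrak{C}$. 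Applying Corollary \ref{quitetechnical} with $x_0$ the center of $B$, $x_1$ the center of $\hat b_1$, and $x_2$ the center of $\hat b_2$ produces a strongly disjoint pair $b_B,b'_B\subset 1.7\lambda B$ at depth $u_1+R$ for some $R\in[1000,\bar R]$, and Fact 1 bounds the relevant overlap count by $1001(2\Upsilon_1)+\bar R(2\Upsilon_1)$, verifying Case (2) of Property P[J] and closing the proof.
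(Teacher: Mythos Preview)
Your proposal follows essentially the same route as the paper's proof and correctly identifies the key simplification: Corollary \ref{quitetechnical} drops the density-drop hypothesis of Corollary \ref{sotechnical}, so no pigeonhole over $m\approx\beta_2\bar A$ nested scales is needed and two nested balls $\hat b_1,\hat b_2$ suffice in Case~A.3. That is exactly what removes the area dependence from $J$.

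There is, however, a quantitative slip in your choice of $J$. You assert that the term $m(\log_2(2\bar K)+1)\Upsilon_1$ disappears entirely, but in the paper's $3$-dimensional proof it survives as $4(\log_2(2\bar K)+1)\Upsilon_1$: only the area-dependent factor $m$ is replaced by $2$; the $\log_2(2\bar K)$ contribution does not vanish. Concretely, the threshold separating Case~A from Case~B is ``more than $2(\log_2(2\bar K)+1)$ scales carry a ball with $3\lambda\hat b\subset(2\lambda-2^{-1000})B$'' (you left this vague as ``many'' versus ``finitely many''), and in Cases~A.2, A.3, B.2, B.3 the overlap count picks up contributions from the intermediate scales $u\in[1001,\log_2(2\bar K)]$ between the large-ball regime and the scale $u_1$ of $\hat b_1$. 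Since $K>2^{1000}$, we have $\log_2(2\bar K)>1000$, so these scales are not absorbed by your $1001\cdot(2\Upsilon_1)$ term, and your stated bound $1001(2\Upsilon_1)+\bar R(2\Upsilon_1)$ in Case~A.3 is too small. With your proposed $J$ the verification of Property~P[J] would fail in these sub-cases. The fix is painless---include $4(\log_2(2\bar K)+1)\Upsilon_1$ in $J$ as the paper does---and it does not affect the main point, since $\bar K$ depends only on $\lambda,K$ and not on $\mathrm{area}(\Sigma)$.
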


\begin{proof} 
The proof slightly differs from that of Proposition \ref{triplybis} so it deserves some explanations.

Recall the definitions of $\bar{r}= \frac{1}{2}$, $\bar{A}$, $\lambda$, $\delta$, $\bar{K}\geq 30\lambda K^2$ (Subsections \ref{stabrad}, \ref{almostflat2}, \ref{mmpprr}).
%Fix $m$ a large integer depending on $\bar{A}$, so that $\frac{\bar{A}}{2(m-1)}\leq \frac{1}{\beta_1}$ where $\beta_1$ is given in Theorem \ref{noconcentration} with $\bar{\delta}=\delta$, $\bar{K}=K$.

Let $\Sigma$, $\bigcup_{\geq1}\mathcal{B}'_k$, $\mathbf{B}$ be as in the statement. Let $B\in \mathbf{B}\cap \mathcal{B}'_k$ for some $k\geq 1$. We denote the radius of a ball by $\rad$. Facts 1 and 2 of the proof of Proposition \ref{triplybis} still hold true. \newline

Set $$J:= 2002 \Upsilon_1 + 4(\log_2(2\bar{K})+1)\Upsilon_1 +2\bar{R}\Upsilon_1 + \Upsilon_2 ,$$
where $\bar{R}$ is given in Theorem \ref{noconcentration}, and $\Upsilon_1,\Upsilon_2$ are as in the proof of Proposition \ref{triplybis}.

We can suppose that there is an integer $v_1> \log_2(2\bar{K})$ and a ball $\bar{b} \in \mathcal{B}'_{k+v_1}$ with $3\lambda\bar{b} \cap \lambda B\neq \varnothing$, since otherwise Property P[J] is true by Fact 2. We can assume $v_1$ to be the smallest integer satisfying these properties.
%We can suppose that the number of indices $v> 0$ so that there is $\hat{b} \in \mathcal{B}'_{k+v}$ with $3\lambda\hat{b} \cap \lambda B \neq \varnothing$ is larger than $\log_2(2\bar{K})$.
%Let $v_1$ be the smallest integer larger than $\log_2(2\bar{K})$ such that there is $\bar{b} \in \mathcal{B}'_{k+v_1}$ with $3\lambda \bar{b} \cap  \lambda B \neq \varnothing$. 
\newline

\textbf{Case A.} First suppose that the number of $u> 0$ such that there is a $\hat{b}\in \mathcal{B}'_{k+u}$ with $3\lambda\hat{b}\subset (2\lambda -2^{-1000})B$ is larger than $ 2(\log_2(2\bar{K})+1)$.  Instead of introducing $m$ balls as in the proof of Proposition \ref{triplybis}, we just need two of them.

Write for convenience $k=u_0$. We can find two integers $u_1,u_2$ with 
\begin{itemize}
\item $u_{i} \geq u_{i-1}+\log_2(2\bar{K})$ for $i=1,2$,
\item there is $\hat{b}_i \in \mathcal{B}'_{k+u_i}$ with $3\lambda \hat{b}_i \subset  (2\lambda-2^{-1000})B$,
\item for $u\in(u_{i-1}+\log_2(2\bar{K}) , u_i)$, there is no $\hat{b} \in \mathcal{B}'_{k+u}$ with $3\lambda \hat{b} \subset  (2\lambda-2^{-1000})B$. 
\end{itemize}
Since for $u\geq \log_2(2\bar{K})$, any $\hat{b}\in \mathcal{B}'_{k+u}$ with $3\lambda\hat{b}\cap \lambda B\neq \varnothing$ is such that $3\lambda\hat{b}\subset(2\lambda-2^{-1000})B$, we have $v_1\geq u_1$. Note also that
$$\rad(\hat{b}_1) < \frac{\rad(B)}{ \bar{K}} \text{ and }$$
$$\rad(\hat{b}_2) < \frac{\rad(\hat{b}_1)}{\bar{K}} .$$ \newline

%(Here we need $\{u_i\}_{i=1}^m$ instead of just $u_1$ as in the proof of Proposition \ref{triply}.)

Case A.1.\newline 
Suppose that for some $j\in\{1,2\}$,
%for some $\hat{b}_1 \in \mathcal{B}'_{k+u_1}$ with $3\lambda \hat{b}_1 \subset  2\lambda B$,
$$\hat{b}_j \cap 1.1\lambda B=\varnothing.$$
Then Case (2) of Property P[J] is satisfied (same proof as Case A.1. in the proof of Proposition \ref{triplybis}).\newline

Case A.2.\newline 
Suppose that, 
$$\hat{b}_1 \cap 1.1\lambda B\neq \varnothing \text{ and }\hat{b}_2 \cap 1.1\lambda B\neq \varnothing.$$
Assume also that 
$$6\lambda\hat{b}_1\cap 3\lambda\hat{b}_2 = \varnothing.$$
Then Case (2) of Property P[J] is satisfied (same proof as Case A.2. in the proof of Proposition \ref{triplybis}).\newline

Case A.3.\newline 
Suppose again that
$$\hat{b}_1 \cap 1.1\lambda B\neq \varnothing \text{ and }\hat{b}_2 \cap 1.1\lambda B\neq \varnothing.$$
but this time assume additionally that  
$$6\lambda\hat{b}_1\cap 3\lambda\hat{b}_2\neq\varnothing.$$
Since $B$ is an element of the basis $\mathbf{B}$ and $\hat{b}_1 \cap 1.1\lambda B\neq \varnothing$, $\hat{b}_1$ is not in this basis and is centered outside of the almost flat region $\mathfrak{C}$. We can thus apply Corollary \ref{quitetechnical} with $x_0$, $x_1$, $x_2$ being the centers of $B$, $\hat{b}_1$, $\hat{b}_2$: there are two balls $b_1,b_2$ of the form $B(x,\mathbf{r}_{\mathrm{stab}}(x))$ with 
$$3\lambda b_1\cap 3\lambda b_2 =\varnothing, \quad 3\lambda b_1 \cup 3\lambda b_2\subset 1.7\lambda B,$$
such that $b_1,b_2\in \mathcal{F}_{k+u_j+R}$ for some $R\in [1000,\bar{R}]$.
Note that if a ball $Y$ has radius less than $2^{-(k+1000)} $ and $3\lambda Y\cap (3\lambda b_1 \cup 3\lambda b_2) \neq \varnothing$, then $3\lambda Y\subset (2\lambda-2^{-1000})B$. So the size of
$$\{\hat{b}\in \mathcal{B}'_{k+u} ; 0\leq u \leq u_j+R, 3\lambda \hat{b} \cap (3\lambda b_1\cup 3\lambda b_2)\neq \varnothing\}$$
is bounded above by 
$$1001.(2\Upsilon_1) + 2(\log_2(2\bar{K})+1).(2\Upsilon_1)+\bar{R}.(2\Upsilon_1).$$ 
Consequently Case (2) of Property P[J] is satisfied.\newline

\textbf{Case B.} Secondly, suppose that the number of indices $u>0$ such that there is a $\hat{b}\in \mathcal{B}'_{k+u}$ with $3\lambda\hat{b}\subset (2\lambda-2^{-1000})B$ is bounded above by $ 2(\log_2(2\bar{K})+1)$. 
Then Case (1) or (2) of Property P[J] is satisfied (same proof as Case B. in the proof of Proposition \ref{triplybis}).

\end{proof}

Combining Proposition \ref{triply} and Proposition \ref{doublybis} yields the following corollary similar to Corollary \ref{C:counting} except that the constant now does not depend on the area. The proof is completely similar to that of Corollary \ref{C:counting}.

\begin{coro} \label{C:counting3D}
Let $(M^{3},g)$ and the constants $\bar{r}$, $\lambda$, $\epsilon$, $K$ be as above. There is a constant $\mathbf{c}\in(0,1)$ depending on all the previous data such that the following holds. Consider any closed embedded minimal surface $\Sigma\subset M$, and the associated almost flat region $\mathfrak{C}\subset M$. 
%By rescaling, one can suppose that $\bar{r}\in[1/2,1)$. 
Let $\{\bar{b}_j\}_{j=1}^Q$ be a family of $Q$ geodesic balls in $M$ so that
\begin{itemize}
\item each $\bar{b}_j$ is of the form $B(x, \mathbf{r}_{\mathrm{stab}}(x))$, with $\mathbf{r}_{\mathrm{stab}}(x) <\bar{r}$,
\item the balls $\bar{b}_j$ are pairwise disjoint,
\item each $\bar{b}_j$ is centered at a point of $M\backslash \mathfrak{C}$.
\end{itemize}
Then there exists an integer $\mathcal{K}^{(0)} \geq  \mathbf{c}Q$ and a family of geodesic balls $\{{B}'_j\}_{j=1}^{\mathcal{K}^{(0)}}$ in $M$ so that
\begin{itemize}
\item each ${B}'_j$ is of the form $B(x,\lambda \mathbf{r}_{\mathrm{stab}}(x))$, with $\mathbf{r}_{\mathrm{stab}}(x) <\bar{r}$,
\item for any two different indices $j_1$ and $j_2$, we have $\dist({B}'_{j_1},  {B}'_{j_2}) >0$.
\end{itemize} 
In particular, the index and folding number satisfy $$\ind(\Sigma)\geq \mathbf{f}(\Sigma) \geq \mathbf{c}Q.$$
\end{coro}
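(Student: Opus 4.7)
The plan is to follow verbatim the proof of Corollary \ref{C:counting}, substituting Proposition \ref{triply} for Proposition \ref{triplybis}; this is the crucial swap since Proposition \ref{triply} is the three-dimensional version whose constant $J$ is independent of the area bound, which is exactly what yields a constant $c$ independent of the area.

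First I would dispatch the folding number inequality $\mathbf{f}(\Sigma) \geq K^{(0)}$. Given a family $\{B'_j\}_{j=1}^{K^{(0)}}$ of geodesic balls of the form $B(x_j, \lambda s_1(x_j))$ with $s_1(x_j) < \bar{r}$ that are pairwise at positive distance, I would argue that each $B'_j$ contains a definite unstable open piece of $\Sigma$: by definition of $s_1$, for any $s>1$ such that $s\lambda s_1(x_j) < \bar{r}$, the intersection $\Sigma \cap B(x_j, s\lambda s_1(x_j))$ is unstable, so a small open neighborhood (in $\Sigma$) of $\overline{\Sigma \cap B'_j}$ is unstable. The positive distance between the $B'_j$ makes these open unstable pieces disjoint in $\Sigma$, hence $\mathbf{f}(\Sigma) \geq K^{(0)}$ by definition of the folding number, and $\mathbf{f}(\Sigma) \leq \ind(\Sigma)$ gives the rest.

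Next I would produce the family $\{B'_j\}$. After rescaling the metric so that $\bar{r} = 1/2$, each $\bar{b}_j$ lies in some dyadic shell $\mathcal{F}_{k}$ with $k \geq 1$. For each $k \geq 1$, I would extract from $\mathcal{F}_k \cap \{\bar{b}_j\}$ a maximal subfamily $\mathcal{B}'_k$ satisfying the strong disjointness condition (\ref{ravel}). The key quantitative point is that for two balls $b, b' \in \mathcal{F}_k$ with $b \cap b' = \varnothing$, the condition $3\lambda b \cap 3\lambda b' \neq \varnothing$ forces the centers to lie within bounded (rescaled) distance; hence by a standard packing argument in $(M,g)$ there is a dimensional constant $c_5 = c_5(M,g,\lambda)$ with
$$\sharp(\mathcal{F}_k \cap \{\bar{b}_j\}_{j=1}^Q) \leq c_5 \sharp \mathcal{B}'_k, \quad \text{so } \quad Q \leq c_5 \sum_{k \geq 1} \sharp \mathcal{B}'_k.$$
Since every $\bar{b}_j$ is centered outside $\mathfrak{C}$, in particular every ball in $\bigcup_k \mathcal{B}'_k$ (and hence every ball outside any chosen basis $\mathbf{B}$) is centered outside $\mathfrak{C}$, so Proposition \ref{triply} applies and yields an integer $J$ — depending only on $(M,g)$ and the already-fixed constants $\bar{r}, \lambda, \epsilon, K$, \emph{not} on $\area(\Sigma)$ — such that $(\bigcup_k \mathcal{B}'_k, \mathbf{B})$ satisfies Property P[$J$].

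Finally I would invoke Proposition \ref{doublybis} (whose statement and proof are dimension-free and do not involve the area bound): it produces an integer $K^{(0)}$ and a family $\{b_i\}_{i=1}^{K^{(0)}}$ of balls of the form $B(x, s_1(x))$ with pairwise positive $\lambda$-distances, satisfying
$$K^{(0)} \geq \frac{1}{2J} \sum_{k \geq 1} \sharp \mathcal{B}'_k \geq \frac{Q}{2 J c_5}.$$
Setting $B'_j := \lambda b_j$ and $c := (2 J c_5)^{-1}$ gives the conclusion. The entire argument produces a constant $c$ that depends only on $(M,g)$ and the previously chosen constants; I do not expect any genuine obstacle, since all of the machinery — the removable-singularity input through Theorem \ref{mpr} and Corollary \ref{quitetechnical}, the combinatorial tree argument in Proposition \ref{doublybis}, and the area-independent version of Property P[$J$] in Proposition \ref{triply} — has already been set up precisely to make this assembly work uniformly in $\Sigma$.
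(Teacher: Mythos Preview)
Your proposal is correct and follows exactly the approach the paper indicates: the paper simply states that the proof is ``completely similar to that of Corollary \ref{C:counting}'', combining Proposition \ref{triply} with Proposition \ref{doublybis}. The only remark is that in the induction step of Proposition \ref{doublybis} one must invoke Proposition \ref{triply} (rather than Proposition \ref{triplybis}) to re-verify Property P[$J$] for the sub-family $\mathcal{Z}$, which is exactly the substitution you already identified; otherwise your write-up matches the paper's argument.
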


We apply this corollary to show (\ref{nujabes}).

\begin{proof}[Proof of Inequality (\ref{nujabes})]
Let $(M^{3},g)$ and $\Sigma$ be as in the statement of Theorem \ref{maina}, and suppose that $\bar{r}=1/2$ by rescaling the metric. Let $\mathcal{B}^{(1)}$ be the family of balls considered at the beginning of this subsection. The size of $\mathcal{B}^{(1)} \cap \mathcal{F}_0$ is uniformly bounded  independently of the minimal surface $\Sigma$.  Then we apply Corollary \ref{C:counting3D} to the family  $\mathcal{B}^{(1)} \backslash \mathcal{F}_0$: inequality (\ref{nujabes}) follows and so Theorem \ref{maina} is proved in dimension 3.

\end{proof}

\section{A two-piece decomposition for minimal surfaces in dimension $3$} \label{reamrks}

In this section, let $(M^3,g)$ be a closed Riemannian $3$-manifold. We define a two-piece decomposition for embedded minimal surfaces and refine the results of Section \ref{sectiongenus}.

\subsection{The sheeted/non-sheeted decomposition}
Let $(M^3,g)$ be a closed Riemannian $3$-manifold. A consequence of the previous sections is a ``two-piece decomposition'' for minimal surfaces analogous to the usual thick-thin decomposition for manifolds with bounded sectional curvature. Let $\bar{r}$, $\lambda$ be large constants depending on $(M,g)$, fixed as in the previous section, then the radius $\mathbf{r}_{\mathrm{stab}}$ is well defined (see Subsection \ref{stabrad}). Let $N_0\geq1$ be a positive integer and let $\Sigma\subset (M,g)$ be an embedded minimal surface. Then we define the non-sheeted part $\Sigma_{\leq N_0}$ and the sheeted part $\Sigma_{>N_0}$ as follows:
$$\Sigma_{\leq N_0} := \{q\in \Sigma; \Sigma\cap B(q,\mathbf{r}_{\mathrm{stab}}(q)) \text{ has at most $N_0$ components} \},$$
$$\Sigma_{> N_0} := \{q\in \Sigma; \Sigma\cap B(q,\mathbf{r}_{\mathrm{stab}}(q)) \text{ has more than $N_0$ components} \}.$$
Notice that 
$$\Sigma = \Sigma_{\leq N_0} \sqcup \Sigma_{> N_0}, $$
and $\Sigma_{\leq N_0}$ (resp. $\Sigma_{> N_0}$) is a closed (resp. open) subset of $\Sigma$. We will see in the next subsections that both the genus and the area of the non-sheeted part $\Sigma_{\leq N_0}$ can be controlled by the Morse index of $\Sigma$.
The sheeting number $N(\Sigma)$ defined in Subsection \ref{mainapf} is simply the smallest integer $N$ so that $\Sigma = \Sigma_{\leq N}$. By Schoen's curvature estimates for stable minimal surfaces, the sheeted part $\Sigma_{> N_0}$ looks like a large number of almost flat disks around any point $p$ of $\Sigma_{> N_0}$, on the scale of stability $\mathbf{r}_{\mathrm{stab}}(p)$. That decomposition can be compared to the different one in \cite[Lemma 2.15]{C&Mboundedgenus2} for minimal surfaces with bounded second fundamental form.

The definition of the decomposition relies on the stability radius, which is not canonically defined ($\bar{r}$, $\lambda$ can be taken arbitrarily large), however the qualitative properties of the decomposition do not depend on that choice. The results of this section are to be understood to hold whenever the constants $\lambda, \bar{r}, \epsilon, \frac{1}{K}$ of last section are small enough.

%Note that the two-piece decomposition also makes sense in higher dimensions. However, the sheeted part cannot for the moment be described as neatly as in dimension $3$, because the available curvature bounds for stable minimal hypersurfaces only exist in dimensions $3$ to $7$ and they depend on an a priori area bound in dimensions larger than $3$, see \cite{SchoenSimon}.

\subsection{Genus of $\Sigma_{\leq N_0}$}
A closed subset $X$ of a $2$-dimensional surface $\Sigma$ will be said to have genus at most $\gamma$ if there is a compact $2$-dimensional submanifold with boundary of $\Sigma$ with genus at most $\gamma$ and containing $X$. The genus of a compact surface is the genus of the closed surface obtained by gluing disks along the boundary circles. A planar surface is a surface that can be embedded in the plane; those surfaces have genus $0$.

The non-sheeted part $\Sigma_{\leq N_0}$ has genus bounded linearly by the Morse index as follows:

\begin{theo} \label{rolala}
There is a constant $C$ depending only on $(M,g)$ such that for any integer $N_0>0$, for any closed embedded minimal surface $\Sigma\subset (M,g)$,
$$\genus(\Sigma_{\leq N_0}) \leq CN_0(\ind(\Sigma)+1).$$
\end{theo}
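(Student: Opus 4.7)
The plan is to replay the argument of Subsection \ref{mainapf} almost verbatim, except that we restrict the construction to the non-sheeted part $\Sigma_{\leq N_0}$. In the proof of Theorem \ref{maina} in dimension $3$, the factor $\area(\Sigma)$ appeared only through the bound $N(\Sigma)\leq c_1 \area(\Sigma)$ on the global sheeting number, which in turn was used to bound the number of almost flat disks of $\Sigma$ sitting inside any single stability ball $B(x,s_1(x))$. But if $x\in \Sigma_{\leq N_0}$, then by definition $\Sigma\cap B(x,s_1(x))$ has at most $N_0$ components, each a nearly flat disk by the curvature estimate recalled in Subsection \ref{almostflat1}. On $\Sigma_{\leq N_0}$ we can therefore substitute $N_0$ for $N(\Sigma)$ throughout the argument, producing the desired linear-in-$N_0$ bound.

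More precisely, I would first pick a compact $2$-submanifold with boundary $X\subset\Sigma$ containing $\Sigma_{\leq N_0}$ and computing $\genus(\Sigma_{\leq N_0})$. Applying Besicovitch's covering lemma to $\{B(x,s_1(x)):x\in\Sigma_{\leq N_0}\backslash\mathfrak{C}\}$, I extract disjoint subfamilies $\mathcal{B}^{(1)},\dots,\mathcal{B}^{(c_1)}$ whose union still covers $\Sigma_{\leq N_0}\backslash\mathfrak{C}$, with $c_1=c_1(M,g)$, and take $\mathcal{B}^{(1)}$ to be the largest. Adding to these the components of $\mathfrak{C}\cap\Sigma$ which meet $\Sigma_{\leq N_0}$ — which by Definition \ref{2020june} are only disks and annuli, and whose number is controlled by $\sharp\mathcal{B}^{(1)}$ because each telescope's boundary must be met by a stability disk of the cover — I obtain a good cover of $X$ by at most $c_2 N_0\,\sharp\mathcal{B}^{(1)}$ contractible pieces whose overlap is bounded by a constant depending only on $(M,g)$. \v{C}ech cohomology then yields
$$\genus(\Sigma_{\leq N_0}) \leq \genus(X) \leq c_3 N_0 \,\sharp\mathcal{B}^{(1)}.$$

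Finally, every ball in $\mathcal{B}^{(1)}$ is centered outside $\mathfrak{C}$ and has the form $B(x,s_1(x))$, so Corollary \ref{C:counting3D} — applied after rescaling to $\bar r=1/2$, the uniformly bounded subfamily $\mathcal{B}^{(1)}\cap\mathcal{F}_0$ being separated out as in the proof of Theorem \ref{maina} — yields $\sharp\mathcal{B}^{(1)}\leq c_4(\ind(\Sigma)+1)$. Combining the two inequalities proves the theorem with $C=c_3 c_4$. The step that requires the most care is the handling of the almost flat region $\mathfrak{C}$: one must verify that no component of $\mathfrak{C}\cap\Sigma$ meeting $\Sigma_{\leq N_0}$ is entire (i.e.\ closed in $\Sigma$), and that the total number of such components is indeed $O(\sharp\mathcal{B}^{(1)})$. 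Both assertions follow from the telescope structure of Subsection \ref{almostflat1}, which ensures that each telescope $T_i\subset\mathfrak{C}$ meets $\Sigma\backslash\mathfrak{C}$ near its two boundary spheres, in a region already covered by balls from $\mathcal{B}^{(1)}$.
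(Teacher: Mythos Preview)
Your proposal is correct and follows essentially the same route as the paper's own proof: restrict the Besicovitch cover to centers in $\Sigma_{\leq N_0}\backslash\mathfrak{C}$, observe that on such balls the sheeting number is by definition at most $N_0$ (so the factor $N(\Sigma)$ in (\ref{glad}) becomes $N_0$), then invoke Corollary~\ref{C:counting3D} to bound $\sharp\mathcal{B}^{(1)}$ by $C(\ind(\Sigma)+1)$, and finally absorb the almost flat region $\mathfrak{C}$ using that $\Sigma\cap\mathfrak{C}$ is a union of disks and annuli.

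One remark on the last step: your justification that ``the number of components of $\mathfrak{C}\cap\Sigma$ meeting $\Sigma_{\leq N_0}$ is controlled by $\sharp\mathcal{B}^{(1)}$ because each telescope's boundary must be met by a stability disk of the cover'' is not quite right as stated --- the cover is only of $\Sigma_{\leq N_0}\backslash\mathfrak{C}$, so a telescope boundary lying in $\Sigma_{>N_0}$ need not be met by any ball of the cover, and a single telescope can contain many sheets of $\Sigma$. However, this count is not actually needed for the \emph{genus} bound (and the paper does not claim it either; note its parenthetical ``we do not know if the number of boundary components of $R$ can be controlled''). The point is that if $U_0$ is a compact core of the union $U$ of the cover disks, then $b_0(U_0)+b_1(U_0)$, and in particular the number of boundary circles of $U_0$, is bounded by $CQ$ via \v{C}ech cohomology. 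Attaching any number of disks and annuli to $U_0$ along $\partial U_0$ can raise the genus by at most half the number of boundary circles of $U_0$ (each handle-creating annulus uses up two of them), so $\genus(R)\leq CQ$ regardless of how many annuli from $\mathfrak{C}\cap\Sigma$ are involved.
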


\begin{proof}
The proof essentially follows from the arguments of Sections \ref{sectionbetti}, \ref{sectiongenus}. Let $\mathfrak{C} = \bigcup_{i=1}^LT_i$ denote the almost flat region associated to $\Sigma$ composed of the $(\epsilon,K)$-telescopes $T_i$, as defined in Subsection \ref{almostflat1}.
%(it is an open set given by a disjoint union of $(\epsilon,K)$-telescopes). 

The boundary of the closed subset $\Sigma_{\leq N_0}$ may not have nice regularity properties,
%, however it is contained in a compact surface $R$ with smooth boundary inside $\Sigma$ such that at any point $x\in R$, the number of connected components of $\Sigma \cap B(x,\frac{\mathbf{r}_{\mathrm{stab}}(x)}{2})$ is at most $N_0$. This surface $R$ may have uncontrolled genus, 
and we want to find a compact subsurface of $\Sigma$ containing $\Sigma_{\leq N_0}$, with genus at most $CN_0(\ind(\Sigma)+1)$.

Set 
$$\mathcal{Z} := \Sigma_{\leq N_0} \setminus \mathfrak{C}.$$
As in Subsections \ref{mainapf} and \ref{olpo}, using Besicovitch's covering lemma and Corollary \ref{C:counting3D}, the closed set $\mathcal{Z}$ can be covered by $Q'$ intrinsic smooth convex open $2$-disks $D_1,...,D_{Q'}$ inside $\Sigma$ such that each $D_q$ is 
%a small perturbation of 
a connected component of some $\Sigma \cap B(x,\mathbf{r}_{\mathrm{stab}}(x))$ where $x\in \mathcal{Z}$, and:
\begin{itemize}
%\item $\mathcal{R} :=  D_1\cup...\cup D_{Q'}$ is a compact surface with piecewise smooth boundary,
\item any finite intersection of some disks in $\{D_1,...,D_{Q'}\}$ is diffeomorphic to a disk or is empty,

\item the overlap, i.e. the maximal number of disks $D_q$ ($q\in \{1,...,Q'\}$) intersecting non-trivially a given element $D_{q'}$, is bounded by $C=C(M,g)$ (this comes from Besicovitch's covering lemma and (\ref{muuudeux})),
\item given a telescope $T_i$, a disk $D_q$ can intersect at most $1$ connected component of $\Sigma\cap T_i$ (this comes from \ref{sbound}), (\ref{flllllat}) and (\ref{review2})),
\item the number of telescopes intersecting a given disk $D_q$ is at most $2$ (this comes from Besicovitch's covering lemma and Lemma \ref{almostflat} $(iv)$),
\item given a telescope $T_i$, the number of disks $D_q$ intersecting a fixed connected component of $\Sigma\cap T_i$ is bounded by $C=C(M,g)$ (this comes from Besicovitch's covering lemma, Lemma \ref{almostflat} $(iii)$ and \ref{sbound}), (\ref{flllllat}), (\ref{review2})),
\item $$Q'\leq CN_0(\ind(\Sigma)+1),$$ where $C=C(M,g)$ (this comes from  Corollary \ref{C:counting3D} and the definition of $\Sigma_{\leq N_0}$).
%\item $\genus(\mathcal{R}) \leq CN_0(\ind(\Sigma)+1)$ where the constant $C$ only depends on $(M,g)$ (this follows from the previous points and  Lemma \ref{lemmetopo} in the Appendix).
\end{itemize}

Next we show a property analogous to $(ii)$ in  Lemma \ref{almostflatT}. We will use the notations of Subsections \ref{defoftelescopes2},\ref{defoftelescopes3}. The constants $\lambda$, $K$, $\epsilon$ will always be chosen depending only on $(M,g)$. Recall that each connected component of $\mathfrak{C}$ is an $(\epsilon,K)$-telescope $T= \bigcup_{i=1}^m A(z_i,r_i,2r_i)$ where the annuli $A(z_i,r_i,2r_i)$ belong to $\bar{\mathcal{A}}_\epsilon^{bis}$ and satisfy (\ref{inininbis}). By Lemma \ref{almostflat} $(i)$, the connected components of $\Sigma\cap \mathfrak{C}$ are disks or annuli. In fact by a similar argument, for $T$ as above, each connected component of $\Sigma \cap (B(z_m,2r_m) \setminus B(z_1,\frac{100r_1}{K}))$ is either an annulus or a disk.
If $\Omega$ is a connected component of $ \Sigma \cap T$, $\Omega$ belongs to a connected component $\hat{\Omega}$ of $\Sigma \cap (B(z_m,2r_m) \setminus B(z_1,\frac{100r_1}{K}))$. We say that $\Omega$ is of disk type (resp. of annulus type) if the extension $\hat{\Omega}$ is a disk (resp. an annulus).

Let 
$$\{d_j\}_j \quad \text{(resp. $\{e_k\}_k$)}$$ 
be the set of connected components of $\Sigma\cap \mathfrak{C}$ of disk type (resp. of annulus type) intersecting non-trivially the closure of the union  $D_1\cup...\cup D_{Q'}$, and let $\hat{d}_j$ be the extension corresponding to $d_j$. In particular,  $\{\hat{d}_j\}$ are disks while $\{e_k\}$ are disjoint annuli. A key point which explains why we introduced $\{d_j\}_j, \{e_k\}_k$ is that they satisfy the following properties when $K$, $\frac{1}{\epsilon}$ are large enough,. First, for any $(\epsilon,K)$-telescope $T$ and any annulus $\beta \in \{e_k\}_k$ with boundary in $\partial T$, it follows from the definition of pointed $\epsilon$-flat annuli that $\beta$ meets both boundary components of $\partial T$ at angles belonging to $(0.4\pi,0.6\pi)$. Moreover the disks in $\{\hat{d}_j\}$ are convex inside $\Sigma$ endowed with the induced metric.

Using those properties and (\ref{sbound}), (\ref{flllllat}), (\ref{review2}), if $\lambda,K,\frac{1}{\epsilon}$ are large enough, then 
for any $q_1,...,q_l\in \{1,...,Q'\}$ and any two different elements $\beta,\beta' \in  \{e_k\}_k$,  
\begin{equation}\label{like (ii)}
\begin{split}
& \bigcap_{i=1}^l D_{q_i} \cap \beta \text{ is diffeomorphic to a disk when non-empty},\\
& \text{and } \beta\cap \beta' =\varnothing.
\end{split}
\end{equation}
As for $\{\hat{d}_j\}$,  if $K$, $\frac{1}{\epsilon}$ are large enough, 
for any $q_1,...,q_l\in \{1,...,Q'\}$, for any elements $\gamma,\gamma' \in  \{\hat{d}_j\}_j$ and $\beta\in  \{e_k\}_k$,  
\begin{equation}\label{like (ii) bis}
\begin{split}
& \bigcap_{i=1}^l D_{q_i} \cap \gamma \text{ is diffeomorphic to a disk when non-empty}\\
& \gamma\cap \gamma' = \gamma\text{ or } \gamma' \text{ or } \varnothing,\quad \gamma\cap \beta =\varnothing\text{ or } \beta.
\end{split}
\end{equation}
The equalities above follow from disjointness of the telescopes $T_i$. In principle, it could happen that say $\gamma' \subset \gamma$ or $\beta\subset \gamma$ when $\gamma$ is the extension of a component of $\Sigma\cap T$, where $T= \bigcup_{i=1}^m A(z_i,r_i,2r_i)$ is a telescope,  and $\gamma$ intersects another strictly smaller telescope $T'$ which can be thought of as lying around a point of $\partial B(z_1,\frac{100 r_1}{K})$. 
Properties (\ref{like (ii)}) and (\ref{like (ii) bis}) serve as a replacement for  Lemma \ref{almostflatT} $(ii)$ in dimension $3$.

%By deforming a bit the disks $D_1,...,D_{Q'}$, we can assume that for any $q_1,...,q_l\in \{1,...,Q'\}$ and any element $\beta\in \{d_j\}_j\cup \{e_k\}_k$,  $$\Sigma \cap \bigcap_{i=1}^l D_{q_i} \cap \beta \text{ is diffeomorphic to a disk or an annulus}.$$

Set 
$$\mathcal{R} := D_1\cup...\cup D_{Q'} \cup \bigcup_j \hat{d}_j \cup \bigcup_k e_k.$$
By moving a bit the disks $D_1,...,D_Q'$ we can assume without loss of generality that $\mathcal{R}$ is a compact surface with piecewise smooth boundary. As listed in the properties of $\{D_q\}$ above, any one of the disks $D_q$ intersects at most $2$ different components of $\mathfrak{C}$ (i.e. telescopes); furthermore, a disk $D_q$ can intersect at most $1$ connected component of $\Sigma\cap T_i$ in each given telescope $T_i$. This means that $\{\hat{d}_j\}_j\cup \{e_k\}_k$ has at most $2Q'$ elements. 

Let $\mathcal{M}$ be the subfamily of maximal elements in $\{D_q\}_{q=1}^{Q'} \cup \{\hat{d}\}_j \cup\{e_k\}_k$, namely we remove from $\{D_q\}_{q=1}^{Q'} \cup \{\hat{d}\}_j \cup\{e_k\}_k$ any element entirely contained in another one.
A property of $\{D_q\}$ listed above was that, given a telescope $T_i$, a connected component of $\Sigma \cap T_i$ intersects at most $C=C(M,g)$ disks $D_q$. Combined with (\ref{like (ii)}), (\ref{like (ii) bis}), this implies that $\mathcal{M}$ is a covering by open sets of $\mathcal{R}$, with overlap bounded by  a constant depending only on $(M,g)$, and any intersection of elements of $\mathcal{M}$ is a disk, an annulus or empty. 
Now Lemma \ref{lemmetopo} in the Appendix and the bound on $Q'$ listed in the properties of $\{D_q\}$ above imply that 
$$\genus(\mathcal{R}) \leq C'Q' \leq  C''N_0(\ind(\Sigma)+1)$$
where $C',C''$ depend only on $(M,g)$.

Since $\Sigma_{\leq N_0}\setminus \mathcal{R}$ is at positive distance from $\mathcal{R}$ and included in $\Sigma\cap \mathfrak{C}$ (which is a disjoint union of annuli and disks), $\Sigma_{\leq N_0}\setminus \mathcal{R}$ has genus $0$ and so 
$$\genus(\Sigma_{\leq N_0})\leq C''N_0(\ind(\Sigma)+1)$$
where $C''$ depends only on $(M,g)$.
%(I do not know if the number of boundary components of this surface can be made to satisfy a similar bound). 

\end{proof}

\subsection{Area of $\Sigma_{\leq N_0}$}

We will call area of a closed subset of a $2$-dimensional surface $\Sigma$ its $2$-dimensional Hausdorff measure. The area of the non-sheeted part $\Sigma_{\leq N_0}$ can be bounded sublinearly by the Morse index as follows:

\begin{theo} \label{oideal}
There is a constant $C$ depending only on $(M,g)$ so that for any integer $N_0>0$, for any closed embedded minimal surface $\Sigma\subset (M,g)$,
$$\area(\Sigma_{\leq N_0}) \leq CN_0 (\ind(\Sigma)+1)^{\frac{1}{3}}.$$
\end{theo}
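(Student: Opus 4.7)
The strategy is to combine the covering/counting machinery of Section \ref{sectiongenus} (in particular Corollary \ref{C:counting3D}) with the discrete Hölder inequality in order to promote the linear-in-index ball count ``number of disjoint $s_1$-balls $\lesssim \ind(\Sigma)+1$'' into a sublinear-in-index area bound. The fractional exponent $1/3$ reflects the dimension of the ambient manifold: a pairwise disjoint family $\{B(x_i,s_1(x_i))\}$ in the $3$-manifold $M$ satisfies $\sum_i s_1(x_i)^3 \leq C\Vol(M,g)$, while area contributions are built out of $s_1(x_i)^2$, and Hölder interpolates between these weighted counts.

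First, I would extract by the Besicovitch covering lemma a pairwise disjoint subfamily $\mathcal{B}=\{B(x_i,s_1(x_i))\}_{i=1}^Q$ with centers $x_i\in \Sigma_{\leq N_0}\setminus\mathfrak{C}$ so that an enlargement of $\mathcal{B}$ covers $\Sigma_{\leq N_0}\setminus\mathfrak{C}$. Since each $x_i\in\Sigma_{\leq N_0}$, the slice $\Sigma\cap B(x_i,s_1(x_i))$ consists of at most $N_0$ almost-flat geodesic $2$-disks (by the choice of $\lambda,\epsilon_S$ in Subsections \ref{stabrad} and \ref{almostflat1}), each of area at most $Cs_1(x_i)^2$. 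Consequently
$$\area(\Sigma_{\leq N_0}\setminus\mathfrak{C}) \leq CN_0\sum_{i=1}^Q s_1(x_i)^2.$$

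The next step is the analytic heart of the proof. By the discrete Hölder inequality with exponents $(3,3/2)$,
$$\sum_{i=1}^Q s_1(x_i)^2 \leq Q^{1/3}\Bigl(\sum_{i=1}^Q s_1(x_i)^3\Bigr)^{2/3}.$$
The disjointness of the balls $B(x_i,s_1(x_i))$ in the $3$-manifold $(M,g)$ gives $\sum_i s_1(x_i)^3 \leq C\Vol(M,g)$, while Corollary \ref{C:counting3D} applied to $\mathcal{B}$ yields $\ind(\Sigma)+1 \geq cQ$. Substituting gives the desired estimate for the non-conical part:
$$\area(\Sigma_{\leq N_0}\setminus\mathfrak{C}) \leq CN_0(\ind(\Sigma)+1)^{1/3}.$$

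The main obstacle, and final step, is estimating $\area(\Sigma_{\leq N_0}\cap\mathfrak{C})$. Recall that $\mathfrak{C}$ is a disjoint union of $\epsilon$-telescopes $T_j$, and $\Sigma\cap T_j$ is, after rescaling at each annular layer, an almost-flat union of disks and annuli (Subsection \ref{almostflat1}). For each $T_j$ with outermost scale $t_j^+$, I would argue that the number of connected components of $\Sigma\cap T_j$ meeting $\Sigma_{\leq N_0}$ is at most $CN_0$: indeed, a point $x\in\Sigma_{\leq N_0}\cap T_j$ near the outer boundary satisfies $s_1(x)$ comparable to $t_j^+$, and by the almost-flat structure the ball $B(x,s_1(x))$ intersects every such component, so the sheeting hypothesis at $x$ bounds their number by $N_0$. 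Since each component has area at most $C(t_j^+)^2$, we obtain $\area(\Sigma_{\leq N_0}\cap T_j)\leq CN_0(t_j^+)^2$. A point $y_j\in\Sigma\setminus\mathfrak{C}$ just outside $T_j$ has $s_1(y_j)$ comparable to $t_j^+$, so these contributions can be absorbed by enlarging $\mathcal{B}$ with one such ``collar'' ball per telescope; disjointness is preserved up to a bounded factor. The same Hölder/volume/index argument applied to the enlarged family then yields $\area(\Sigma_{\leq N_0}\cap\mathfrak{C})\leq CN_0(\ind(\Sigma)+1)^{1/3}$, completing the proof.
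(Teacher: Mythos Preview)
Your overall strategy is the same as the paper's: split into $\Sigma_{\leq N_0}\setminus\mathfrak{C}$ and $\Sigma_{\leq N_0}\cap\mathfrak{C}$, cover by $s_1$-balls, bound each local area by $CN_0\rad(b)^2$, interpolate via H\"older between the cubic volume constraint $\sum\rad(b)^3\le C\Vol(M,g)$ and the ball count $\lesssim\ind(\Sigma)+1$ coming from Corollary \ref{C:counting3D}. For the complement of $\mathfrak{C}$ your argument is essentially identical to the paper's.

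The one genuine weak point is your treatment of $\Sigma_{\leq N_0}\cap T_j$. You assert the existence of a point $x\in\Sigma_{\leq N_0}\cap T_j$ \emph{near the outer boundary} of $T_j$ and use the sheeting bound at that $x$ to cap the number of components at $N_0$. But such an $x$ need not exist: if the number of sheets of $\Sigma$ at the outer scale of $T_j$ exceeds $N_0$, every point of $\Sigma\cap T_j$ near $\partial_+T_j$ lies in $\Sigma_{>N_0}$, while $\Sigma_{\leq N_0}\cap T_j$ may still be nonempty at smaller scales. Your component count then has no anchor. The paper avoids this by not counting components at all: it applies Besicovitch again, this time to balls $B(x,s_1(x))$ with $x$ in the closure of $\Sigma_{\leq N_0}\cap T_j$, uses that in an $\epsilon$-telescope $s_1(y)$ is comparable to the scale of the layer containing $y$, and observes that any pairwise disjoint family of such balls centered in $T_j$ has $\sum_k\rad(b'_k)^2\le C\rad(b_j)^2$ (boundedly many balls per dyadic layer, geometric series dominated by the outer scale). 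This gives $\area(\Sigma_{\leq N_0}\cap T_j)\le CN_0\rad(b_j)^2$ directly. After that the paper runs a \emph{separate} H\"older/volume/Corollary \ref{C:counting3D} argument on the disjoint family $\{b_j\}_{j=1}^L$ (with $b_j$ centered on $\partial_+T_j\subset M\setminus\mathfrak{C}$), rather than trying to merge them into your family $\mathcal{B}$; this sidesteps the ``disjointness up to a bounded factor'' bookkeeping you allude to.

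In short: your proof is on the right track and matches the paper outside $\mathfrak{C}$, but the component-counting step inside each telescope has a gap. Replacing it with the paper's Besicovitch-plus-geometric-series bound on $\sum\rad(b'_k)^2$ fixes it cleanly.
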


\begin{proof}
Let $N_0$ be a positive integer. Let $\mathfrak{C}$ be the almost flat region associated with $\Sigma$ (see Subsection \ref{almostflat1}). In this proof, $C$ denotes a constant only depending on $(M,g)$.

We first estimate the area of $\Sigma_{\leq N_0}\backslash \mathfrak{C}$. We will  use the notations of Subsections \ref{splash} and \ref{mainapf}, and we can assume that $\bar{r}=1/2$. Now as in Subsections \ref{splash} and \ref{mainapf}, we use the Besicovitch covering lemma to get an intger $c_0$ independent of $\Sigma$, and families $\mathcal{B}^{(i)}$ ($i\in \{1,\dots,c_0\}$) of balls of the form $B(x,\mathbf{r}_{\mathrm{stab}}(x))$ with $x\in \Sigma_{\leq N_0}\backslash \mathfrak{C}$, such that:
$$\forall i \in \{1,\dots,c_0\},\forall b_1\neq b_2 \in \mathcal{B}^{(i)}, \quad b_1\cap b_2=\varnothing,$$
$$\text{and}\quad \Sigma_{\leq N_0} \backslash \mathfrak{C} \subset \bigcup_{i=1}^{c_0}\bigcup_{b\in \mathcal{B}^{(i)}} b.$$
We can assume that $\mathcal{B}^{(1)}$ is chosen so that 
\begin{equation} \label{rtt}
\area(\Sigma_{\leq N_0}\cap \bigcup_{b\in \mathcal{B}^{(1)} } b) \geq \frac{1}{c_0} \area(\Sigma_{\leq N_0}\backslash \mathfrak{C}).
\end{equation}
Let $\rad(b)$ be the radius of $b$. By (\ref{sbound}) and the definition of $\Sigma_{\leq N_0}$, for any ball $b\in \mathcal{B}^{(1)}$, the area of $\Sigma\cap b$ is bounded above by $CN_0\rad(b)^2$. Then by (\ref{rtt}) and H\"{o}lder's inequality:
\begin{align*}
\area(\Sigma_{\leq N_0}\backslash \mathfrak{C}) &  \leq CN_0 \sum_{b\in \mathcal{B}^{(1)}} \rad(b)^2 \\
& \leq CN_0 \big(\sharp\mathcal{B}^{(1)}\big)^{1/3}\big(\sum_{b\in \mathcal{B}^{(1)}} \rad(b)^3\big)^{2/3}.
\end{align*}
We have shown in Corollary \ref{C:counting3D} that $\sharp \mathcal{B}^{(1)}$ is bounded above by $C(\ind(\Sigma)+1)$ and $\sum_{b\in \mathcal{B}^{(1)}} \rad(b)^3$ is clearly less than $C\Vol(M,g)$ (recall that the balls in $\mathcal{B}^{(1)}$ are pairwise disjoint). Therefore we obtain
\begin{equation}\label{estim1}
\area(\Sigma_{\leq N_0}\backslash \mathfrak{C}) \leq CN_0(\ind(\Sigma)+1)^{1/3}.
\end{equation}

Next, we estimate the area of $\Sigma_{\leq N_0}\cap \mathfrak{C}$. Let $T_1,\dots,T_L$ be the $(\epsilon,K)$-telescopes (i.e. connected components) in $\mathfrak{C}$, see Definition \ref{2020june}. The closure of each $(\epsilon,K)$-telescope $T_j$ has two boundary  components, one inner $2$-sphere $\partial_-T_j$ and one outer $2$-sphere $\partial_+T_j$. Now choose balls $b_1,\dots,b_L$ of the form $b_j = B(x_j,\mathbf{r}_{\mathrm{stab}}(x_j))$ where $x_j$ is a point of $\partial_+T_j$. By the upper bound in (\ref{review2}), these balls $b_j$ are pairwise disjoint since $\lambda,K$ are taken large.

Observe the following:
\begin{itemize}
\item for an annulus $A(p,t,2t)\in \bar{\mathcal{A}}_{\epsilon}^{bis}$ (see Subsection \ref{almostflat1}), for any point $y \in A(p,t,2t)$, we have
$$t/C\leq \mathbf{r}_{\mathrm{stab}}(y)\leq Ct,$$
and  in particular the radius of $b_j$ is comparable to the radius of $\partial_+T_j$ (see (\ref{review2})),
\item the maximal number of pairwise disjoint balls of the form $B(x,\mathbf{r}_{\mathrm{stab}}(x))$ with $x\in A(p,t,2t)$ is bounded by $C$ (this follows from the first point),
\item if $b'_1,\dots,b'_Q$ are pairwise disjoint balls of the form $B(x,\mathbf{r}_{\mathrm{stab}}(x))$ centered at points of a given $(\epsilon,K)$-telescope $T_j$, then 
$$\sum_{k=1}^Q \rad(b'_k)^2 \leq C\rad(b_j)^2.$$
That follows from the second point and properties of $(\epsilon,K)$-telescopes.
\end{itemize}
%In view of the second inequality in (\ref{review2}), 
Recall that the balls $b_1,\dots,b_L$ are pairwise disjoint.
%because $\lambda$ is sufficiently small and $K$ sufficiently large. 
Moreover for each $j\in\{1,\dots,L\}$,
$$\area(\Sigma_{\leq N_0}\cap T_j) \leq CN_0 \rad(b_j)^2.$$
To see why this last inequality is true, one can apply the Besicovitch covering lemma to the balls $B(x,\mathbf{r}_{\mathrm{stab}}(x))$ with $x$ in the closure of $\Sigma_{\leq N_0}\cap T_j$ and use the previous observations. Hence by H\"{o}lder's inequality,
\begin{align*}
\area(\Sigma_{\leq N_0}\cap \mathfrak{C}) &\leq CN_0\sum_{j=1}^L\rad(b_j)^2 \\
&\leq CN_0 L^{1/3} \big(\sum_{j=1}^L\rad(b_j)^3\big)^{2/3}.
\end{align*}
Since the $b_j$ are pairwise disjoint, $\sum_{j=1}^L\rad(b_j)^3$ is bounded above by $C\Vol(M,g)$. Moreover by Corollary \ref{C:counting3D}, $L\leq C(\ind(\Sigma)+1)$, so we derive the following estimate:
\begin{equation}\label{estim2}
\area(\Sigma_{\leq N_0}\cap \mathfrak{C}) \leq CN_0(\ind(\Sigma)+1)^{1/3}.
\end{equation}
Putting (\ref{estim1}) and (\ref{estim2}) together, we conclude the proof.

\end{proof}

Recall that the sheeting number $N(\Sigma)$ of a minimal surface $\Sigma$ is the smallest integer $N_0$ such that $\Sigma=\Sigma_{\leq N_0}$. We end this section with a few remarks. If $N(.)$ happens to be uniformly bounded along a sequence of minimal surfaces, then Theorem \ref{oideal} and Ejiri-Micallef \cite{EjiriMicallef} imply that 
$$\area\leq C(\genus+1)^{1/3}.$$ %Combining Theorem \ref{rolala} and Theorem \ref{oideal}, we havefor any closed embedded minimal surface:
%$$\genus(\Sigma)+\area(\Sigma)\leq CN(\Sigma)(\ind(\Sigma)+1).$$
In fact Theorem \ref{oideal} is optimal: H. Schwarz's triply periodic minimal surfaces give examples of minimal surfaces in a flat $3$-torus for which $N(.)$ is uniformly bounded and $\area$, $\ind^{1/3}$, go to infinity at the same speed, while Lawson's minimal surfaces in $\mathbb{S}^3$ \cite{Lawson} show that in general $N(.)$ is unbounded and $\area\leq C(\ind+1)^{1/3}$ does not hold. Theorem \ref{oideal} more generally yields a stronger result: for any sequence of minimal surfaces $\{\Sigma_k\}$ for which 
$$\lim_{k\to\infty} \area(\Sigma_k) =\infty \quad \text{and} \quad \ind(\Sigma_k) = o(\area(\Sigma_k)^3),$$ for any integer $N_0$, we have 
\begin{equation} \label{hint}
\lim_{k\to\infty}\frac{\area(\Sigma_{k,\leq N_0})}{\area(\Sigma_k)} = 0.
\end{equation}
The infinite sequence of minimal surfaces $\{\Gamma^{(p)}\}$ that we constructed in compact manifolds with strictly stable minimal boundary \cite[Theorem 10]{AntoineYau} generically satisfy these assumptions on the area and the index\footnote{See \cite{Song-Zhou20} where it is proved that these minimal surfaces generically  ``scar'' along stable surfaces.}. Thus Theorem \ref{oideal} sheds light on the geometric properties of the minimal surfaces $\{\Gamma^{(p)}\}$, which behave very differently from the min-max minimal surfaces constructed in \cite{MaNeinfinity,GasGua}. See section \ref{open} for further questions on the sheeting number $N(\Sigma)$.

\section{Singular set and Morse index in dimensions at least $8$} \label{undostres}

In this section we prove Theorem \ref{mainb}. The estimates on the singular set of area minimizing hypersurfaces of Naber-Valtorta \cite{NaberValtorta} will replace curvature estimates \cite{Sc,SchoenSimon} which were so useful in previous sections. The results of Naber-Valtorta \cite[Theorem 1.6 and Section 9]{NaberValtorta} actually hold for minimal hypersurfaces  smooth outside a set of Hausdorff codimension at least $7$ which are stable (instead of area minimizing), since only the $\epsilon$-regularity property is used \cite[Theorem 2.11]{NaberValtorta}, and the latter holds for stable hypersurfaces: let $\Sigma^n\subset B(p,2)\subset (\mathbb{R}^{n+1},g)$ be a stable embedded minimal hypersurface smooth outside a singular set of Hausdorff dimension at most $n-7$, with $\partial \Sigma\cap B(p,2) =\varnothing$, $\|g-g_{\text{Eucl}}\|_{C^5}\leq \epsilon_0$,
%$|\sec_{g}|\leq \epsilon_0$, $\injrad_{g}\geq \epsilon_0^{-1}$, 
$\mathcal{H}^n(\Sigma)\leq \Lambda$, then there exists $\epsilon=\epsilon(n,\Lambda)>0$ such that if $\epsilon_0\leq \epsilon$ and $B(p,2)$ is $(n-7,\epsilon)$-symmetric, then $\Sigma$ is smooth around $p$ and
$$\max\{0\leq r\leq 1;\quad  \sup_{B(p,r)} |\mathbf{A}| \leq r^{-1}\} \geq 1,$$
see \cite[Definition 1.1, Theorem 2.11]{NaberValtorta} for the definition of being $(n-7,\epsilon)$-symmetric. This follows from the more general compactness and regularity theorem of Schoen-Simon \cite[Theorem 2, Theorem 1]{SchoenSimon}.

\subsection{The case of dimension $8$}

It happens that in order to prove Theorem \ref{mainb} in dimension 8, we can use arguments almost identical to those involved in the proof of Theorem \ref{maina}. Consider a closed manifold $(M^8,g)$ and a minimal hypersurface $\Sigma \subset M$ embedded smoothly outside of finitely many points, and of finite index (see the introduction for why these objects are natural). 
Let $\bar{r}>0$.
%Let $\bar{r}<\injrad_M$, we can assume that $\bar{r}=\frac{1}{2}$ and $6\bar{r}< \injrad_M$. 
In Section \ref{sectionbetti}, we conveniently used a large $\lambda>2$ while here we can just take $\lambda =2$. Let $\mathbf{r}_{\mathrm{stab}}$ be the stability radius defined by (see Subsection \ref{stabrad})
$$\mathbf{r}_{\mathrm{stab}}(x):=\sup\{r\leq\bar{r}; B(x,2 r) \cap \Sigma \text{ is stable}\}.$$

The following lemma, which holds in all dimensions $n+1$, ensures that $\mathbf{r}_{\mathrm{stab}}$ is bounded away from $0$.
\begin{lemme}\label{boundedaway}
If $\Sigma$ is a minimal hypersurface in $(M^{n+1},g)$ whose singular set has dimension at most $n-7$ and has finite Morse index, then for all $x\in M$, we have $\mathbf{r}_{\mathrm{stab}}(x)>0$.
\end{lemme}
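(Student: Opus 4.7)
Fix $x \in M$. I want to exhibit a radius $0 < r \leq \bar{r}$ on which $B(x, 2r) \cap \Sigma$ is stable, where stability is understood as nonnegativity of the Jacobi quadratic form $Q$ on $C^\infty_c(\Sigma_{\mathrm{reg}} \cap B(x, 2r))$ with $\Sigma_{\mathrm{reg}} := \Sigma \setminus \Sing(\Sigma)$, and the empty intersection is stable by convention. The only nontrivial input is finiteness of $\ind(\Sigma) = \ind(\Sigma_{\mathrm{reg}})$, which I would use to extract a compact set $K \subset \Sigma_{\mathrm{reg}}$ outside of which $\Sigma_{\mathrm{reg}}$ is stable; the rest is case analysis of $x$ with respect to $K$.

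\textbf{Extracting $K$.} I claim that $\ind(\Sigma_{\mathrm{reg}}) < \infty$ implies the existence of a compact $K \subset \Sigma_{\mathrm{reg}}$ such that $Q(\phi) \geq 0$ for every $\phi \in C^\infty_c(\Sigma_{\mathrm{reg}} \setminus K)$. If not, any exhaustion $K_1 \subset K_2 \subset \ldots$ of the smooth noncompact manifold $\Sigma_{\mathrm{reg}}$ by compacta would produce at each step a negative variation supported in $\Sigma_{\mathrm{reg}} \setminus K_j$; by recursively enlarging $K_{j+1}$ to absorb the support of the preceding variation, one would obtain infinitely many disjoint-support negative directions for $Q$, contradicting $\ind(\Sigma_{\mathrm{reg}}) < \infty$.

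\textbf{Case analysis.} If $x \notin \Sigma$, closedness of $\Sigma$ in $M$ gives $B(x, 2r) \cap \Sigma = \varnothing$ for small $r$, hence stability. If $x \in \Sigma$ and $x \notin K$, then $K$ being compact in $M$ and missing $x$ yields $\dist_M(x, K) > 0$; choosing $r \leq \min(\bar{r}, \dist_M(x, K)/2)$ ensures $B(x, 2r) \cap K = \varnothing$, so every compactly supported Jacobi variation in $\Sigma_{\mathrm{reg}} \cap B(x, 2r)$ lives in $\Sigma_{\mathrm{reg}} \setminus K$ and has $Q \geq 0$. This subcase handles every $x \in \Sing(\Sigma)$, since $K \subset \Sigma_{\mathrm{reg}}$. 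Finally, if $x \in K$, then $x$ is a regular point of $\Sigma$ and, by embeddedness and closedness of $\Sigma \setminus D_x$ in $M$, coincides on some scale $r_0 > 0$ with a single smooth minimal disk $D_x$ of bounded second fundamental form; shrinking $r$ further, the standard Jacobi estimate on a smooth minimal disk with bounded curvature (Poincaré dominating $|\mathbf{A}|^2 + \Ric$ after scaling) forces stability of $B(x, 2r) \cap \Sigma$ for $r$ small enough.

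\textbf{Main obstacle.} The substantive point is extracting the compact set $K$ from finite index; it is standard noncompact Morse-index theory but deserves care since $\Sigma_{\mathrm{reg}}$ may behave wildly near $\Sing(\Sigma)$. Everything else is either a direct closedness/compactness argument or a textbook local second-variation estimate at a regular point, so I would state the extraction as a brief self-contained remark and then consolidate the three cases in one paragraph.
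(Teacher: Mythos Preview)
Your proof is correct. The paper takes a more localized route: it declares the case $x\notin\Sigma$ or $x\in\Sigma_{\mathrm{reg}}$ ``clear'' and, for $x\in\Sing(\Sigma)$, argues directly by contradiction that if every $B(x,a)\cap\Sigma$ were unstable one could build a nested sequence $a_0>a_1>\cdots$ with each annular piece $\Sigma\cap(B(x,a_k)\setminus\bar B(x,a_{k+1}))$ unstable (the destabilizing test function in $B(x,a_k)$ has compact support in $\Sigma_{\mathrm{reg}}$, hence positive distance to $x$, which furnishes $a_{k+1}$), contradicting finite index via \cite[Lemma 3.1]{Sharp}. You package the same disjoint-supports mechanism into the global statement ``$\Sigma_{\mathrm{reg}}$ is stable outside a compact $K\subset\Sigma_{\mathrm{reg}}$'' and then read off the singular case from $\Sing(\Sigma)\cap K=\varnothing$; this costs a little more setup but handles all $x$ uniformly and, unlike the paper, spells out the Poincar\'e argument behind the ``clear'' regular case. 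Both arguments are standard and essentially equivalent in content.
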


\begin{proof}
If $x$ is not on $\Sigma$ or is a smooth point of $\Sigma$ then the lemma is clear. If $x$ is in the singular set of $\Sigma$, suppose towards a contradiction that for any small $a>0$, $\Sigma\cap B(x,a)$ is unstable. Fix $a_0>0$ small enough, then by the usual cut-off  argument, we get a number $a_1<a_0$ so that $\Sigma\cap (B(x,a_0)\backslash \bar{B}(x,a_1))$ is unstable. We can continue with $a_1$ instead of $a_0$, etc, and get a decreasing infinite sequence of positive radii $\{a_k\}$ so that every piece $\Sigma\cap (B(x,a_k)\backslash \bar{B}(x,a_{k+1}))$ is unstable. \cite[Lemma 3.1]{Sharp} would imply that $\Sigma$ has infinite Morse index, a contradiction.
\end{proof}

Let $A>0$. By Naber-Valtorta \cite[Theorem 1.6]{NaberValtorta}, in dimension $n+1=8$ there is a constant $c_{NV}$  depending only on $(M,g)$ and $A$, so that for any closed minimal hypersurface $\Sigma$ as above with $7$-volume at most $A$, for any $x\in M$,
\begin{equation} \label{nv}
\sharp \Sing\big(\Sigma \cap B(x,\mathbf{r}_{\mathrm{stab}}(x))\big) \leq c_{NV}
\end{equation}
where $\sharp$ denotes as before the number of elements in a set, and $\Sing$ denotes the (finite) singular set.

Let $\mathbf{f}(\Sigma)$ be the folding number of $\Sigma$ (see Subsection \ref{folding}).
As already observed previously, the maximal number of disjoint balls of the form $B(x,2\mathbf{r}_{\mathrm{stab}}(x))$ with $\mathbf{r}_{\mathrm{stab}}(x)<\bar{r}$, at positive distance to one another, is at most $\mathbf{f}(\Sigma)$, which itself is at most $\ind(\Sigma)$.

In dimension $8$, $\mathbf{r}_{\mathrm{stab}}$ is still continuous:
\begin{lemme} 
Given $(M,g)$, $\Sigma$ as above, the stability radius $\mathbf{r}_{\mathrm{stab}}:M\to (0,\bar{r}]$ is a $\frac{1}{2}$-Lipschitz function.
\end{lemme}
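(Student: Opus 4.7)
The plan is to establish upper and lower semi-continuity of $s_1$ at an arbitrary point $x_0\in M$; the argument mirrors that of Lemma \ref{continui}, with only a small bookkeeping adjustment needed to handle the finite singular set of $\Sigma$. First I would observe that the supremum in the definition of $s_1(x_0)$ is actually attained: if $r_k\nearrow r_0:=s_1(x_0)$ with each $B(x_0,2r_k)\cap\Sigma$ stable, then any smooth function compactly supported in the regular part of $B(x_0,2r_0)\cap\Sigma$ is in fact supported in some $B(x_0,2r_k)$ and thus satisfies the stability inequality; hence $B(x_0,2r_0)\cap\Sigma$ is stable. In particular, for any $r'>r_0$ with $r'\le\bar{r}$ the intersection $B(x_0,2r')\cap\Sigma$ is unstable.

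For upper semi-continuity, fix $\epsilon>0$ with $r_0+\epsilon\le\bar{r}$ (the case $r_0=\bar{r}$ is vacuous since $s_1\le\bar{r}$ everywhere). Since $r_0+\epsilon/2>s_1(x_0)$, the intersection $B(x_0,2(r_0+\epsilon/2))\cap\Sigma$ is unstable, so there exists a smooth function $\phi$ compactly supported in the regular part of this intersection whose Jacobi quadratic form is strictly negative. The support of $\phi$ is a fixed compact set contained in the open ball $B(x_0,2(r_0+\epsilon/2))$, so for every $x$ sufficiently $g$-close to $x_0$ we still have $\spt(\phi)\subset B(x,2(r_0+\epsilon))$. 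Then $\phi$ witnesses instability of $B(x,2(r_0+\epsilon))\cap\Sigma$, giving $s_1(x)\le r_0+\epsilon$.

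For lower semi-continuity, Lemma \ref{boundedaway} gives $r_0>0$, so I fix $\epsilon\in(0,r_0)$ and choose $\delta>0$ so that $\dist_g(x,x_0)<\delta$ forces the inclusion $B(x,2(r_0-\epsilon))\subset B(x_0,2(r_0-\epsilon/2))$. By the first observation $B(x_0,2(r_0-\epsilon/2))\cap\Sigma$ is stable. Any smooth test function compactly supported in the regular part of $\Sigma\cap B(x,2(r_0-\epsilon))$ is then automatically a valid test function for the larger ball, so the stability inequality holds; hence $\Sigma\cap B(x,2(r_0-\epsilon))$ is stable and $s_1(x)\ge r_0-\epsilon$.

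The only mild obstacle is the presence of $\Sing(\Sigma)$, but since this set is finite and stability is by assumption tested only on functions compactly supported in the regular part, the standard monotonicity of stability under inclusion of open sets goes through unchanged, and no additional cutoff near the singularities is needed. Combining the two semi-continuity statements yields the continuity of $s_1$.
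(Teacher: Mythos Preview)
Your proof is correct and is the standard argument. The paper actually omits the proof of this lemma (both in its original appearance as Lemma~\ref{continui} and in the dimension-$8$ restatement), presumably considering it routine; your upper/lower semi-continuity argument via monotonicity of stability under inclusion is precisely what one would supply to fill the gap.
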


\vspace{1em}

\subsubsection{Almost conical region $\mathfrak{C}$}

Let $A>0$ and define $\bar{A}$ correspondingly as in Subsection \ref{almostflat2}.
We will define an almost conical region $\mathfrak{C}$ as in Subsection \ref{almostflat2}. First, in the next lemma, we set $\bar{r}=\infty$ so that $\mathbf{r}_{\mathrm{stab}}$ can take values in $(0,\infty]$.

\begin{lemme} \label{aroundtheworld}
Let $\bar{A}>0$. If $\beta_0$ is large enough, for any minimal cone $\Gamma\subset \mathbb{R}^8$ tipped at $0$ smooth outside of $0$, with $\Theta_{\text{Eucl}}(0,1)\leq \bar{A}$ and with
$$\max_{y\in \Gamma\cap \partial B_{\text{Eucl}(0,1)} } |\mathbf{A}(y)| \geq \beta_0,$$
there are  two balls $b_1,b_2 \subset B_{\text{Eucl}}(0,2)\backslash B_{\text{Eucl}}(0,1/2)$ of the form 
$$b_1=B(x,\mathbf{r}_{\mathrm{stab}}(x)),\quad b_2=B(\frac{4x}{3},\mathbf{r}_{\mathrm{stab}}(\frac{4x}{3}))$$
such that $8b_1\cap 8b_2=\varnothing$ and the radii of $b_1,b_2$ are smaller than $2^{-1000}$. 
\end{lemme}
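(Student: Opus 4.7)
The plan is to mimic, in the present codimension-one singular setting, the argument sketched in Subsection \ref{almostflat2} that produces $\beta_0$ in dimensions three to seven, replacing the dimension-free Schoen-Simon curvature estimate by its $n=7$ counterpart from \cite{SchoenSimon}, which requires an a priori volume bound. The density assumption $\Theta_{\text{Eucl}}(0,1)\leq\bar{A}$ together with the monotonicity formula supplies exactly such a uniform area bound on any ball sitting in the smooth part of the cone, so the $n=7$ Schoen-Simon estimate is available away from the vertex.

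I would argue by contradiction. Assuming the lemma fails for every $\beta_0$, there is a sequence of minimal cones $\Gamma_k\subset\mathbb{R}^8$ tipped at $0$, smooth outside $0$, with $\Theta_{\text{Eucl}}(0,1)\leq\bar{A}$ and $\beta_k:=\max_{y\in\Gamma_k\cap\partial B_{\text{Eucl}}(0,1)}|\mathbf{A}_k(y)|\to\infty$, such that no admissible pair of balls exists. Let $y_k\in\Gamma_k\cap\partial B_{\text{Eucl}}(0,1)$ be a maximizer. The key observation is cone homogeneity: $|\mathbf{A}_k(ty_k)|=\beta_k/t$ for every $t>0$, so curvature concentration on the link automatically propagates along the whole ray $\mathbb{R}_+ y_k$. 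The idea is to pick two well-separated points on that ray inside the annulus and turn each into a forbidden ball.

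Concretely, I would set $p_k:=\tfrac{5}{8}y_k$ and $q_k:=\tfrac{7}{8}y_k$; both lie in $\Gamma_k\cap\bigl(B_{\text{Eucl}}(0,1)\setminus B_{\text{Eucl}}(0,1/2)\bigr)$ with $|p_k-q_k|=\tfrac14$ and $|\mathbf{A}_k(p_k)|,|\mathbf{A}_k(q_k)|\geq \tfrac{8}{7}\beta_k$. Applying the $n=7$ Schoen-Simon curvature estimate to $\Gamma_k$ in the stable balls $B_{\text{Eucl}}(p_k,2s_1(p_k))$ and $B_{\text{Eucl}}(q_k,2s_1(q_k))$, whose areas are uniformly bounded by a constant depending only on $\bar{A}$ thanks to the monotonicity formula, yields
$$s_1(p_k)\leq C(\bar{A})/|\mathbf{A}_k(p_k)|\leq C'(\bar{A})/\beta_k,$$
and similarly for $s_1(q_k)$, so both radii tend to zero.

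For $k$ large enough, $s_1(p_k)$ and $s_1(q_k)$ are smaller than $\min(2^{-1000},\tfrac{1}{100})$, which places $b_1:=B_{\text{Eucl}}(p_k,s_1(p_k))$ and $b_2:=B_{\text{Eucl}}(q_k,s_1(q_k))$ inside $B_{\text{Eucl}}(0,1)\setminus B_{\text{Eucl}}(0,1/2)$ and makes $6s_1(p_k)+6s_1(q_k)<\tfrac14=|p_k-q_k|$, giving $6b_1\cap 6b_2=\varnothing$; this yields the forbidden pair and the desired contradiction. The only subtlety I foresee will be the small bootstrap step: the Schoen-Simon estimate must be applied in balls avoiding the singular origin, but this is automatic since the estimate itself forces $s_1(p_k),s_1(q_k)\to 0$ while $|p_k|,|q_k|$ stay bounded away from $0$, so the argument closes on itself for $k$ large.
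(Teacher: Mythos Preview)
Your argument has a genuine gap at its central analytic step. You invoke ``the $n=7$ Schoen--Simon curvature estimate'' to conclude $s_1(p_k)\leq C(\bar A)/|\mathbf A_k(p_k)|$, but no such pointwise curvature estimate exists when $n+1=8$. The estimates in \cite[Section 6, Corollary 1]{SchoenSimon} hold only for $2\leq n\leq 6$; in ambient dimension eight, stable minimal hypersurfaces with area bound can have isolated singularities, and smooth stable hypersurfaces can have arbitrarily large second fundamental form. The Hardt--Simon foliation leaves near the Simons cone are smooth, area-minimizing (hence stable) hypersurfaces in $\mathbb R^8$ with uniformly bounded density, yet their curvature blows up while the stability radius stays equal to $\bar r$ everywhere. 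So the implication ``$|\mathbf A(p_k)|$ large $\Rightarrow$ $s_1(p_k)$ small'' that your proof rests on is simply false in this dimension. Your bootstrap remark about avoiding the origin does not address this: the obstruction is not the location of the ball but the nonexistence of the estimate itself.

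The paper's proof confronts exactly this difficulty. Rather than a pointwise estimate, it argues by contradiction that $s_1$ stays bounded below on the whole link, then uses Schoen--Simon compactness to extract a limiting cone $\Gamma_\infty$ with isolated singularities (hence smooth away from $0$ by the cone structure), and finally invokes the Schoen--Simon \emph{sheeting theorem} \cite[Theorem 1]{SchoenSimon} to upgrade varifold convergence to graphical convergence with uniform curvature bounds, contradicting $\beta_m\to\infty$. Your ``two points on a ray'' idea is fine and is implicit in the paper's reduction (``by the cone structure, it suffices to prove that $s_1$ cannot be bounded away from $0$ on the link''), but the mechanism forcing $s_1$ small must go through compactness and sheeting, not through a curvature estimate.
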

\begin{proof}
Let $\{\Gamma_m\}_{m>0}$ be a sequence of cones as in the statement but also with 
\begin{equation} \label{watts}
\max_{y\in \Gamma\cap \partial B_{\text{Eucl}(0,1)} } |\mathbf{A}(y)| \geq m.
\end{equation}
By the cone structure, it suffices to prove that $\mathbf{r}_{\mathrm{stab}}$ cannot be bounded away from $0$ on $\Gamma_m\cap \partial B_{\text{Eucl}}(0,1)$. If this was the case, then by the work of Schoen-Simon \cite{SchoenSimon} on stable minimal hypersurfaces, $\Gamma_m$ would subsequently converge in the sense of varifolds to a minimal cone $\Gamma_\infty$ tipped at $0$ with isolated singularities since $n=7$. In particular $\Gamma_\infty$ is smooth outside of $0$. Since $\mathbf{r}_{\mathrm{stab}}$ is bounded below away from $0$ along the sequence on $\Gamma_m\cap \partial B_{\text{Eucl}(0,1)}$, the sheeting theorem of Schoen-Simon \cite[Theorem 1]{SchoenSimon} implies that $\Gamma_m$ had second fundamental form uniformly bounded on points of $\partial B_{\text{Eucl}}(0,1)$  which contradicts (\ref{watts}).  
\end{proof}

%An argument similar to the proof of Lemma \ref{aroundtheworld} shows that a 
A reverse bound like (\ref{reverse1}) still holds after possibly rescaling the metric $g$:
if for a point $x$ we have $\mathbf{r}_{\mathrm{stab}}(x) < \bar{r}$ then the ball $B(x,4\mathbf{r}_{\mathrm{stab}}(x)) $ is unstable so either $B(x,4\mathbf{r}_{\mathrm{stab}}(x))$ contains a singular point or 
$$\text{ there is } y'\in B(x,4\mathbf{r}_{\mathrm{stab}}(x)) \text{ such that  } \mathbf{r}_{\mathrm{stab}}(x) |\mathbf{A}(y')| > \varepsilon'$$
for some $\varepsilon'>0$ independent of $\Sigma$.

Let $\delta,K>0$. We define $\mathcal{G}_{\beta_0}$, ``$\delta$-close to a cone $\Gamma\in \mathcal{G}_{\beta_0}$'', ``pointed $\delta$-conical annulus'', $\mathcal{A}_\delta$, $\mathcal{A}^{bis}_\delta$, $(\delta,K)$-telescopes  as in Subsection \ref{almostflat2}. Part of Lemma \ref{almostflatT} holds in dimension $8$:

\begin{lemme} 

Let $(M^{8},g)$ be a closed 8-manifold and $A>0$.
If the constants $\varepsilon_{\mathrm{SS}}, \frac{1}{\lambda}, \bar{r}, \frac{1}{K}, \delta$ are chosen small enough then the following is true.

Let $(M^8,g)$ be a closed manifold of dimension $8$ and $\Sigma$ a closed minimal hypersurface embedded smoothly outside of finitely many points, of finite index. Then there exist disjoint $(\delta,K)$-telescopes $T_1,...,T_L$ such that $T_i$ are at positive distance from one another, and
$$\bigcup_{An\in \mathcal{A}_\delta} An \subset \bigcup_{i=1}^L T_i.$$

\end{lemme}

\begin{proof}
The proof is very similar to the one of Lemma \ref{almostflatT}, except that we use Lemma \ref{boundedaway} to get that $\mathbf{r}_{\mathrm{stab}}$ is bounded away from $0$ on $\Sigma$ and that
$$\hat{s}:=\inf\{s; \exists p , A(p,s,2s) \in \mathcal{A}_\delta\}>0.$$
\end{proof}

We can finally define the almost conical region $\mathfrak{C}$ as in Subsection \ref{almostflat2}:
$$\mathfrak{C}:=\bigcup_{i=1}^L T_i.$$
Note that by definition, $\Sigma \cap \mathfrak{C}$ is a \emph{smooth} hypersurface.
We fix $A$, $\bar{A}$, $\delta$, $K>2^{1000}$, $\bar{K}:=30\lambda K^2$.

\vspace{1em}

\subsubsection{No curvature concentration far from $\mathfrak{C}$}

With these definitions in place, one can check that the following analogue of Theorem \ref{noconcentration} holds for $n+1=8$, replacing $\lambda$ by $2$ in the statement. In the statement of Theorem \ref{noconcentration8}, we set $\bar{r}=\infty$ so that $\mathbf{r}_{\mathrm{stab}}$ can take values in $(0,\infty]$.
 
\begin{theo} \label{noconcentration8}
Let $n+1=8$. Set $\bar{K}:= 60K^2$.

There exist $\beta_1>1$, $\mu>0$ and $\bar{R}>1000$ depending only on $\bar{A},\delta,K$ such that the following is true. Let $g$ be a metric $\mu$-close to the Euclidean metric in the $C^5$-topology on $B_{Eucl}(0,\bar{K})\subset \mathbb{R}^8$, and let
$$(\Sigma, \partial \Sigma) \subset (B_{Eucl}(0,\bar{K}),\partial B_{Eucl}(0,\bar{K}))$$
be a compact embedded minimal hypersurface with respect to $g$, such that
\begin{itemize}
\item the $7$-volume of $\Sigma\cap B(0,\bar{K}) $ is at most $\bar{A} \bar{K}^7/2$,
\item $\mathbf{r}_{\text{stab}}(0)< \bar{K}^{-1}$,
\item there is $y'\in B(0,14)$ with $\mathbf{r}_{\text{stab}}(y')=1$,
\item $\Theta_g(0,20K) - \Theta_g(0,\frac{1}{3K}) \leq \beta_1^{-1}$.
\end{itemize}

Then 
\begin{enumerate} [label=(\roman*)]
\item either there are two balls $b_1= B(z',\mathbf{r}_{\text{stab}}(z'))$, $b_2 = B(z'',\mathbf{r}_{\text{stab}}(z''))$ such that
$$ z',z'' \in B(0,14K),\quad 6b_1 \cap 6b_2 = \varnothing,$$
$$\mathbf{r}_{\text{stab}}(z'), \mathbf{r}_{\text{stab}}(z'') \in [2^{-({R}+1)}, 2^{-R}) \text{ for some $R\in [1000,\bar{R}]$},$$
\item or $\Sigma \cap A(0,\frac{1}{2K},14K)$ is $\delta$-close to a cone $\Gamma\in \mathcal{G}_{\beta_0}$.
\end{enumerate}
\end{theo}

\begin{proof}
The proof is the same as Theorem \ref{noconcentration}, but instead of using the curvature bounds of Schoen-Simon in dimensions $3$ to $7$, we use their sheeting theorem as in the proof of Lemma \ref{aroundtheworld}. 
\end{proof}

As a consequence, the analogue of Corollary \ref{sotechnical} also holds true, as well as Proposition \ref{triplybis}, Proposition \ref{doublybis}, and Corollary \ref{C:counting} whose proofs are combinatorial (always replacing $\lambda$ by $2$).

\subsubsection{End of proof in dimension 8}

We can now complete the proof of Theorem \ref{mainb} in dimension $8$ as follows. Let $\Sigma\subset (M^8,g)$ be an embedded closed minimal hypersurface with a finite singular set, of finite Morse index and $7$-volume at most $A$. 
Let $\mathfrak{C}$ the almost conical region of $M$ defined above. In particular $\Sigma \cap \mathfrak{C}$ is smooth and does not contain any singular point.
Recall that $\mathbf{r}_{\mathrm{stab}}(x)>0$ for any $x$ by Lemma \ref{boundedaway}. By the Besicovitch lemma as in Subsection \ref{splash}, and by using (\ref{nv}), we get a finite family $\mathcal{B}^{(1)}$ of pairwise disjoint geodesic balls of the form $B(x,\mathbf{r}_{\mathrm{stab}}(x))$ such that $x\in\Sigma \backslash \mathfrak{C}$ and
$$\sharp \Sing(\Sigma) \leq c_2  \sharp \mathcal{B}^{(1)},$$
for some $c_2$ only depending on $(M,g)$ and $A$.

Applying Corollary \ref{C:counting} here (with $\lambda =2$), we conclude that 
$$\sharp \Sing(\Sigma) \leq c_2  \sharp \mathcal{B}^{(1)} \leq c_3 (\mathbf{f}(\Sigma) +1) \leq c_3 (\ind(\Sigma) +1)$$
for some $c_3$ only depending on $(M,g)$ and $A$, and Theorem \ref{mainb} is proved in dimension $8$.

\vspace{1em}

\subsection{The case of dimension $n+1\geq 9$} \label{plus grand que 9}

Let $\Sigma\subset M^{n+1}$ be a closed minimal hypersurface with $n$-volume bounded above by $A>0$, where $n\geq 8$. We assume $\Sigma$ to be embedded outside of a subset of dimension at most $n-7$. We rescale the metric so that $\bar{r} = \frac{1}{2}$. Let $\Sing(\Sigma)$ be its singular set. In high dimensions, we do not need to introduce an almost conical region $\mathfrak{C}$.

Let $\lambda=2$ and let $\mathcal{F}_k$ be defined as before.
We will make use of the notion of \emph{basis} of certain families of balls, whose definition is recalled here: let $\mathcal{B}'$ be a finite family of balls such that for all $k$ and $b\neq b'\in \mathcal{B}'\cap \mathcal{F}_k$, $6b\cap 6b'=\varnothing$, then a basis $\mathbf{B}'$ of $\mathcal{B}'$ is a subfamily such that for all $i$, $\mathbf{B}'\cap \mathcal{F}_i$ is a maximal subset of $\mathcal{B}'\cap \mathcal{F}_i$ satisfying for all $j<i$: 
$$\forall b\in \mathbf{B}'\cap \mathcal{F}_i, \quad \forall b'\in \mathcal{B}'\cap \mathcal{F}_j, \quad 6b\cap 6b'=\varnothing.$$

Here are some elementary properties of the notion of basis. Let the family $\mathcal{B}'$ and the basis $\mathbf{B}'$ be as above.
First, if $B\in \mathbf{B}'$ then $B$ is locally the largest ball in the following sense. Let $\beta\in \mathcal{B}'$. Let $j_B,j_\beta$ be the integers such that $B\in \mathcal{F}_{j_B} $ and $\beta \in  \mathcal{F}_{j_\beta} $,  then we have the implication:
\begin{equation} \label{implication}
\text{ if } 6B \cap 6\beta \neq \varnothing \text{ then } j_B < j_\beta.
\end{equation}
Secondly, by definition of basis, a ball $b_0\in  \mathcal{B}' \cap \mathcal{F}_{i_0}$ is not in  $\mathbf{B}'$ if and only if there are a sequence of strictly decreasing integers $i_0 > i_1>...> i_k \geq 0$ (where $k$ is a positive integer) and a chain of balls $b_1\in \mathcal{B}' \cap \mathcal{F}_{i_1}, ..., b_k\in \mathcal{B}' \cap \mathcal{F}_{i_k}$, so that
\begin{equation} \label{by definition of basis}
\forall \alpha\in \{0,...,k-1\},\quad 6b_{\alpha} \cap 6b_{\alpha +1} \neq \varnothing, \quad \text{ and } b_k \in \mathbf{B}'.
\end{equation}
%In particular in that case, if $m$ is an integer larger than $1000$, and if any ball in $\mathcal{B}'$ is contained in some $\mathcal{F}_{mj}$ for some $j$, then \begin{equation} \label{role of m} \forall \alpha\in \{0,...,k-1\}, \quad 6b_0\subset 7b_{\alpha +1}.  \end{equation}

\vspace{2em}

%It would be actually enough to assume that $\Sigma$ is a stationary varifold with finite index (?).
Let $\mathbf{r}_{\mathrm{stab}}$ be defined as before (with $\lambda =2$). By Lemma \ref{boundedaway}, $\mathbf{r}_{\mathrm{stab}}(x)>0$ for all $x$. By using a covering with balls of the form $B(x,\mathbf{r}_{\mathrm{stab}}(x))$, the Besicovitch lemma as in  Subsection \ref{splash} and an extraction argument as for (\ref{(23)}) in the proof of Lemma \ref{C:counting}, we get a finite family $\bigcup_{k\geq 0}\mathcal{B}_k$ of disjoint balls of the form $B(x,\mathbf{r}_{\mathrm{stab}}(x))$, $x\in \Sigma$, with $\mathcal{B}_k\subset \mathcal{F}_k$, such that for all $k$ and $b\neq b'\in \mathcal{B}_k$:
\begin{equation} \label{varnot}
6b\cap 6b' =  \varnothing,
\end{equation}
and for which there is a constant $c_1$ depending only on $(M,g)$ satisfying
$$\mathcal{H}^{n-7}(\Sing(\Sigma)) \leq c_1 \sum_{k\geq 0}\sum_{b\in \mathcal{B}_k} \mathcal{H}^{n-7}(\Sing(\Sigma)\cap b).$$
But since for each $b\in \mathcal{B}_k$ and any $\hat{\epsilon}\in(0,1)$, $\Sigma\cap 2(1-\hat{\epsilon})b$ is stable, the work of Naber-Valtorta \cite[Theorem 1.6]{NaberValtorta} implies that there is a constant $c_2$ depending only on $(M,g)$ and $A$ so that for all $k$, $b\in \mathcal{B}_k$, if $\rad(b)$ denotes the radius of $b$:
$$\mathcal{H}^{n-7}(\Sing(\Sigma)\cap b) \leq c_2 \rad(b)^{n-7}.$$

Fix an integer $m>1000$.
We can suppose that for a constant $c_3$ depending only on $(M,g)$, $A$ and $m$, and for an integer $k_0\in[0,m-1]$,  
\begin{equation} \label{bz}
\mathcal{H}^{n-7}(\Sing(\Sigma)) \leq c_3 \sum_{j\geq 0}\sum_{b\in \mathcal{B}_{k_0+mj}} \rad(b)^{n-7}.
\end{equation}
We can suppose that $k_0=0$ since the other cases are treated in the same way. 

\vspace{1em}
Let us modify once more the family $\bigcup_{j\geq 0}\mathcal{B}_{mj}$ by removing some of its elements. The modification involves finitely many steps: we start with a basis $\mathbf{B}_0$ of $\mathcal{B}^0:=\bigcup_{j\geq 0}\mathcal{B}_{mj}$. Let $B\in \mathbf{B}_0$, let $\mathcal{I}(B)\subset \mathbb{N}$ be the set of integers such that for $j \in \mathcal{I}(B)$, there is at most one ball $b\in \mathcal{B}^0\cap \mathcal{F}_{mj}$ with $6b\subset3B$.
We set 
$$\mathcal{B}^{1}: = (\mathcal{B}^{0}\backslash  \mathbf{B}_0)\backslash \{b; \quad  \exists B \in \mathbf{B}_0, \exists k_0\in\mathcal{I}(B),  b\in \mathcal{B}^{0}\cap \mathcal{F}_{mk_0} \text{ and } 6b\subset 3B\}.$$
Let $\mathbf{B}_1$ be a basis of $\mathcal{B}^{1}$. We can repeat this procedure with $\mathbf{B}_1,\mathcal{B}^{1}$ replacing $\mathbf{B}_0,\mathcal{B}^{0}$, remove some elements from $\mathcal{B}^{1}$, etc. 
Eventually we get a sequence of families $\mathcal{B}^{0},\mathcal{B}^{1},\mathcal{B}^{2},...,\mathcal{B}^{l}$ with associated bases $\mathbf{B}_0,\mathbf{B}_1,\mathbf{B}_2,...,\mathbf{B}_l$ where $l $ is the last integer for which $\mathcal{B}_{ml} \neq \varnothing$. 
%For clarification, notice that by induction for any $j\in \{0,...,l\}$, and for any $k<j$, $\mathcal{F}_{mk}\cap \mathcal{B}^j = \varnothing$ and in particular $\mathcal{F}_{mj}\cap \mathcal{B}^j \subset\mathbf{B}_j$.

Consider $\bigcup_{j\geq0} \mathbf{B}_{j}$. Note that at each step in the above procedure, we remove balls $b$ such that $6b$ is entirely contained in $3B$ for some $B$ in $\bigcup_{j\geq 0} \mathbf{B}_{j}$. Thus for each $i\geq 0$, $\mathbf{B}_i$ is a basis of $\mathcal{B}^{i+1} \cup \mathbf{B}_i$, and $ \mathbf{B}_{i}$ is also a basis of $\bigcup_{j\geq i} \mathbf{B}_{j}$.

The point of the above construction is the following lemma\footnote{I thank Giada Franz and Santiago Cordero Misteli for correcting a mistake in the original version of its proof, and the referees for pointing out the same mistake.}.
\begin{lemme}\label{fin}
For each $i\geq 0$ and $B\in \mathbf{B}_i$, 
\begin{enumerate}
\item either there is no ball $b\in \mathbf{B}_{j}$ with $j>i$ and $6b\cap 2B\neq \varnothing$,
\item or there are two ball $b,b'\in \mathbf{B}_{i+1}$ with $6b\cap 6b'=\varnothing$, $6b\subset 4B$, $6b'\subset 4B$. 
\end{enumerate}
\end{lemme}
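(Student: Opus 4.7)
The plan is to argue by contrapositive. Assume (1) fails and fix $b_0 \in \mathbf{B}_{j_0}$ with $j_0 > i$ and $6b_0 \cap 2B \neq \varnothing$. Writing $B \in \mathcal{F}_{mk}$ and $b_0 \in \mathcal{F}_{mk_0}$, the goal is to produce two distinct balls of $\mathbf{B}_{i+1}$ whose $6$-dilates lie in $4B$; note that the condition $6b \cap 6b' = \varnothing$ required in (2) is automatic, since any two balls of $\mathbf{B}_{i+1}$ are $6$-disjoint (across different levels by the basis filter, within the same level by the $6$-disjointness already built into each $\mathcal{B}_{mj}$).

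I will first locate $b_0$ precisely. Because $b_0 \in \mathcal{B}^{i+1} \subset \mathcal{B}^i$, I can invoke the basis property of $B$ to rule out $k_0 < k$ (otherwise $6B \cap 6b_0 = \varnothing$) and the same-level $6$-disjointness of $\mathcal{B}_{mk}$ to rule out $k_0 = k$ (since both balls would then be $6$-disjoint members of $\mathcal{B}_{mk}$); both scenarios contradict $6b_0 \cap 2B \neq \varnothing$. Hence $k_0 > k$, and for $m$ chosen sufficiently large the gap in radii upgrades $6b_0 \cap 2B \neq \varnothing$ to $6b_0 \subset 3B$. Next I will produce a companion $b_1$: the fact that $b_0$ survived the singleton removal at step $i+1$ while $6b_0 \subset 3B$ forces $k_0 \notin \mathcal{I}(B)$, yielding some $b_1 \in \mathcal{B}^i \cap \mathcal{F}_{mk_0}$, $b_1 \neq b_0$, with $6b_1 \subset 3B$. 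A short argument shows that distinct $B' \in \mathbf{B}_i$ satisfy $3B \cap 3B' = \varnothing$ (basis property plus same-level $6$-disjointness), so $b_1$ cannot be a singleton in any $3B'$ with $B' \neq B$; inside $3B$ itself $b_1$ is not a singleton because $b_0$ also sits there. Thus $b_1 \in \mathcal{B}^{i+1}$ too, and $b_0, b_1$ are $6$-disjoint members of $\mathcal{B}^{i+1}$ both contained in $3B$ at level $k_0$.

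The third step chases each $b_j$ into $\mathbf{B}_{i+1}$. Starting from $b_j^{(0)} = b_j$, as long as $b_j^{(l)} \notin \mathbf{B}_{i+1}$ I will pick any blocker $b_j^{(l+1)} \in \mathcal{B}^{i+1}$ of strictly smaller level; since the level index decreases at each step, the chain terminates at some $\tilde b_j = b_j^{(L_j)} \in \mathbf{B}_{i+1}$. An induction on $l$, reusing the disjointness argument from Step~1, forces every chain level $k_j^{(l)}$ to remain $> k$: a putative step down to level $\leq k$ would either violate the same-level $6$-disjointness in $\mathcal{B}_{mk}$ or conflict with the basis property of $B$. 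A geometric-series estimate of the chain distances (with ratio $\lesssim 2^{1-m}$, valid for $m$ large) then yields $6b_j^{(l)} \subset 4B$ at every step, and in particular $6\tilde b_j \subset 4B$.

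Finally I must arrange $\tilde b_0 \neq \tilde b_1$. If either $b_0$ or $b_1$ is already in $\mathbf{B}_{i+1}$ the distinctness is automatic, because $\tilde b_j$ then sits at level $k_0$ while the other, nontrivial chain terminates at a strictly smaller level. The remaining case -- neither $b_j$ lies in $\mathbf{B}_{i+1}$ -- is the main obstacle, since a priori the two chains could merge at a common $\tilde b$. To rule this out I will exploit the fact that Step~2 can be iterated: the non-singleton condition $k_0 \notin \mathcal{I}(B)$ supplies extra $6$-disjoint partners, so by picking distinct blockers at the first step of each chain (and, if necessary, applying the removal rule one level up to harvest a second basis element inside $4B$), I will obtain two genuinely distinct elements of $\mathbf{B}_{i+1}$. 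The hard part will be this last step, since it is the only place where the combinatorics of the chain and the singleton-removal procedure truly interact; every preceding step is straightforward geometric bookkeeping once $m$ is chosen large enough.
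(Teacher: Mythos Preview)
Your Steps 1--3 are correct and in fact fill in geometric bookkeeping that the paper's own proof leaves implicit (the chain argument, the fact that chain levels stay $>k$, and the check that the companion survives into $\mathcal{B}^{i+1}$). The real gap is Step~4. ``Picking distinct blockers at the first step'' does not prevent the two chains from merging later: there could be a single element $c\in\mathcal{B}^{i+1}$ at some level $<k_0$ with $6c$ meeting both $6b_0$ and $6b_1$, and then both chains funnel through $c$. Your fallback (``applying the removal rule one level up to harvest a second basis element'') is the right instinct but is not made precise, and as written your Step~3 estimate only places $6\tilde b_j$ inside $4B$, whereas invoking the non-singleton rule requires $6\tilde b\subset 3B$.

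The paper sidesteps this difficulty by reordering your operations. It chains $b_0$ down \emph{first}, obtaining $\hat b\in\mathbf{B}_{i+1}$; since $b_0$ started with $6b_0\cap 2B\neq\varnothing$, your own geometric-series bound in fact gives $6\hat b\subset 3B$ (with margin), not merely $4B$. \emph{Only then} does one invoke the non-singleton property at $\hat b$'s level to produce a companion $\hat b'\in\mathcal{B}^{i+1}$ with $6\hat b'\subset 3B$ and $6\hat b\cap 6\hat b'=\varnothing$, and finally one chains $\hat b'$ down to $\hat b_2\in\mathbf{B}_{i+1}$. The key observation that makes distinctness automatic is this: every element of the second chain lies at a level strictly below that of $\hat b$, and since $\hat b$ already belongs to the basis $\mathbf{B}_{i+1}$, its defining property forces $6\hat b\cap 6c=\varnothing$ for every $c\in\mathcal{B}^{i+1}$ at smaller level --- in particular for $c=\hat b_2$. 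Reorganise your argument in this order (chain, then companion, then chain again) and your proof goes through without the merging problem.
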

\begin{proof}
Let $j_B$ be the integer such that $B\in \mathbf{B}_i\cap  \mathcal{F}_{mj_B}$. 
To check this lemma, suppose that Item (1) does not happen, then there is ${b_0}\in \bigcup_{j\geq i+1} \mathbf{B}_{j} $ such that $6b\cap 2B\neq \varnothing$.  Let $j_0$ be the integer such that $b_0\in \mathcal{F}_{mj_0}$. 
Then since $\mathbf{B}_i$ is a basis of $ \bigcup_{j\geq i} \mathbf{B}_{j}$, necessarily $j_0>j_B$ by (\ref{implication}).
Since $\mathbf{B}_{i+1}$ is a basis of $\mathcal{B}^{i+1}$ (which was defined before the lemma), by (\ref{by definition of basis}) there is a chain of balls $b_1\in \mathcal{B}^{i+1}\cap \mathcal{F}_{mj_1},...,b_k\in \mathcal{B}^{i+1}\cap \mathcal{F}_{mj_k}$ with $j_0>j_1>...>j_k$ so that 
$$\forall \alpha\in \{0,...,k-1\},\quad 6b_\alpha\cap 6b_{\alpha+1} \neq \varnothing, \quad \text{and } b_k \in \mathbf{B}_{i+1}.$$
We necessarily have $j_k>j_B$. Indeed if not, let $l\in \{1,...,k\}$ be the first index for which $j_l \leq j_B$. Since $m>1000$, we have $6b_{l-1} \subset 3B$ which implies that $6b_l\cap 6B \neq \varnothing$, contradicting (\ref{implication}), and the fact that $B\in \mathbf{B}_i$ and $\mathbf{B}_i$ is a basis of $\mathcal{B}^{i+1} \cup \mathbf{B}_i$. So $j_k>j_B$, and since $m>1000$, we have after setting $\hat{b}:=b_k \in \mathbf{B}_{i+1}\cap \mathcal{F}_{mj_{k}}$:
$$6\hat{b}\subset 3B.$$

By definition of $\mathcal{B}^{i+1}$, there must be another ball $b'_0\in \mathcal{B}^{i+1}\cap \mathcal{F}_{mj'_0}$ where $j'_0=j_k$, such that 
$$6b'_0\subset 3B \text{ and } 6\hat{b} \cap 6b'_0=\varnothing$$
(the equality above is just (\ref{varnot})). 
With the same arguments as the previous paragraph, using (\ref{by definition of basis}),
there is $\hat{b}' \in \mathbf{B}_{i+1} \cap \mathcal{F}_{mj'_{k'}}$ with
\begin{equation}\label{hatb2}
6\hat{b}' \subset 4B \text{ and } 6\hat{b} \cap 6\hat{b}'  =\varnothing.
\end{equation}
The equality above follows from (\ref{implication}) and the following: $\hat{b}'\in \mathcal{B}^{i+1}\cap \mathcal{F}_{mj'_{k'}}$, $b'_0\in \mathbf{B}_{i+1} \cap \mathcal{F}_{mj'_0}$ and  $j'_{k'}\leq j'_0=j_k$, while $\hat{b}$ belongs to the basis $\mathbf{B}_{i+1}$ of $\mathcal{B}^{i+1}$.

To conclude, $6\hat{b}\cap 6\hat{b}'=\varnothing$, $6\hat{b}\subset 4B$, $6\hat{b}'\subset 4B$ and so Item (2) is valid.
\end{proof}

Before continuing, we remark the following basic fact for $n>7$: 
$$\forall i\geq 0, \sum_{j\geq i+1} (2^{-mj})^{n-7} = \frac{1}{1-2^{-(n-7)m}} (2^{-m(i+1)})^{n-7}\leq 2 (2^{-m(i+1)})^{n-7}.$$
Hence, by construction of the $\mathbf{B}_{i}$:
\begin{equation} \label{by}
\sum_{j\geq 0}\sum_{b\in \mathcal{B}_{mj}} \rad(b)^{n-7} \leq 3 \sum_{b\in \bigcup_{i\geq 0}\mathbf{B}_{i}} \rad(b)^{n-7}.
\end{equation}

To $\bigcup_{i\geq 1}\mathbf{B}_{i}$ we associate a (not necessarily connected) tree $T$ constructed as follows: each element of $\bigcup_{i\geq 1}\mathbf{B}_{i}$ is a vertex, and for each $i\geq 0$, for a $B\in \mathbf{B}_i$ such that (2) in Lemma \ref{fin} occurs,  choose two balls $b,b'\in \mathbf{B}_{i+1}$ as in (2), and join $B$  to $b,b'$ with an edge for each. This indeed defines a tree $T$ and let $\{b_a\}_{a=1}^L$ be the balls corresponding to the leaves (vertices for which there are no $b,b'$ as above) of $T$. One checks that for $a\neq a'$, $\dist(2b_a,2b_{a'})>0$, and since any open region of $\Sigma$ containing the closure of $2b_a\cap \Sigma$ is unstable, this means that
\begin{equation} \label{bx}
\ind(\Sigma)\geq \mathbf{f}(\Sigma)\geq L.
\end{equation}
Now since each vertex of $T$ either has no descendants or has two of them, 
\begin{equation} \label{bxs}
L\geq \frac{1}{2}\sharp T = \frac{1}{2} \sum_{i\geq 1} \sharp \mathbf{B}_i
\end{equation}
where $\sharp T$ is the total number of vertices in $T$, which is $\sum_{i\geq 1} \sharp \mathbf{B}_i$.

By H\"{o}lder's inequality,
$$
\sum_{b\in \bigcup_{i\geq 0}\mathbf{B}_{i}} \rad(b)^{n-7} \leq \big(\sum_{{i\geq 0}}\sharp \mathbf{B}_{i}\big)^{\frac{7}{n}} \big(\sum_{b\in \bigcup_{i\geq 0}\mathbf{B}_{i}} \rad(b)^{n}\big)^{\frac{n-7}{n}}.
$$
We have just seen in (\ref{bx}), (\ref{bxs}) that there is $c_5$ depending only on $(M,g)$ so that
% and $A$, 
$$\sum_{{i\geq 0}}\sharp \mathbf{B}_{i} =\sharp \mathbf{B}_{0} + \sum_{{i\geq 1}} \sharp \mathbf{B}_{i} \leq c_5(\mathbf{f}(\Sigma)+1)\leq c_5(\ind(\Sigma)+1).$$ 
Moreover, since the balls of $\bigcup_{k\geq 0}\mathcal{B}_k$ are disjoint and centered at points of the minimal hypersurface $\Sigma$, by the monotonicity formula
$$\sum_{b\in \bigcup_{i\geq 0}\mathbf{B}_{i}} \rad(b)^{n}\leq c_5 \mathcal{H}^{n}(\Sigma) \leq c_5A.$$

Putting these estimates together with (\ref{bz}), (\ref{by}), we get a constant $C_A$ depending only on $(M,g)$ and the area upper bound $A$ such that
$$\mathcal{H}^{n-7}(\Sing(\Sigma)) \leq C_A (\mathbf{f}(\Sigma)+1)^{\frac{7}{n}}\leq C_A (\ind(\Sigma)+1)^{\frac{7}{n}}.$$
This finishes the proof when $n>7$.

\section{Open problems}\label{open}
We conclude with some conjectures and questions that naturally arose during our work.

\subsection{A general relation for area, index and genus in dimension $3$} 
In what follows, $\Ric$ (resp. $R$) denotes as usual the Ricci (resp. scalar) curvature. If $f_1,f_2$ are two functions defined on the set of closed minimal surfaces, write $f_1 \approx f_2$ if there is a constant $C>0$ depending only on $(M,g)$ so that $C^{-1} f_1\leq f_2\leq C f_1$ and define $\lesssim$ similarly. We conjecture the following, which generalizes the conjecture of Marques-Neves-Schoen (stated in the Introduction) and our results concerning bounded area minimal surfaces (see Theorems \ref{maina}, \ref{rolala}, \ref{oideal}). \newline

$\mathbf{C}_1:$
Let $(M^3,g)$ be a closed $3$-manifold. There exists a constant $C$ depending only on $(M,g)$ such that for any closed embedded minimal surface $\Sigma$,
\begin{enumerate}
\item $\genus(\Sigma)+ \area(\Sigma)  \approx  \ind(\Sigma)+ \area(\Sigma) $,
\item $\genus(\Sigma)+ \area(\Sigma) \approx \ind(\Sigma)+1$ if $R>0$,
\item $\genus(\Sigma) +1\approx \ind(\Sigma)$ if $\Ric>0$. \newline
\end{enumerate}

Note that Ejiri-Micallef \cite{EjiriMicallef} already proved half of these inequalities by showing that $\ind(\Sigma) \lesssim \genus(\Sigma)+\area(\Sigma)$, and if $\Ric\geq c>0$, Choi-Schoen \cite{ChoiSchoen} proved that $\area(\Sigma)\lesssim \genus(\Sigma) +1$ so Item (3) above is just a reformulation of the conjecture of Marques-Neves-Schoen (\ref{conjMNS}).  \cite{ChodKetMax} implies that for each item, if the right-hand side is bounded, then so is the left-hand side. The relevance of Item (1) for min-max theory is that the sequence of minimal surfaces $\{\Sigma_p\}$ produced by \cite{MaNeinfinity, GasGua, ChodMant} in any closed $3$-manifold should generically have genus growing linearly in $p$. This is reminiscent of Yau's conjecture for the size of nodal sets of eigenfunctions.

\subsection{Quantitative estimates for complexities of stable minimal hypersurfaces} 
In this paper, we showed that to get quantitative estimates for minimal hypersurfaces of high Morse index, it ``suffices'' in some sense to get quantitative estimates for stable minimal hypersurfaces. We used Schoen-Simon \cite{SchoenSimon} and Naber-Valtorta \cite{NaberValtorta}, which are estimates for uniformly bounded $n$-volume stable minimal hypersurfaces. How do these estimates depend on the $n$-volume of the stable minimal hypersurfaces?  Here are two concrete questions: \newline

$\mathbf{Q}_1:$
In dimensions $4 \leq n+1\leq 7$, if $(\Sigma,\partial \Sigma)\subset (B_{\text{Eucl}}(0,2),\partial B_{\text{Eucl}}(0,2) )\subset \mathbb{R}^{n+1}$ is a stable minimal hypersurface, is the total Betti number of $\Sigma \cap B_{\text{Eucl}}(0,1)$ bounded above linearly in terms of the $n$-volume of $\Sigma$?
\newline 

$\mathbf{Q}_2:$
In dimensions $n+1\geq 8$,  if $(\Sigma,\partial \Sigma)\subset (B_{\text{Eucl}}(0,2),\partial B_{\text{Eucl}}(0,2) )\subset \mathbb{R}^{n+1}$ is a stable minimal hypersurface with codimension $7$ singular set, is the $n-7$-dimensional Hausdorff measure of the singular set of $\Sigma \cap B_{\text{Eucl}}(0,1)$ bounded above linearly in terms of the $n$-volume of $\Sigma$?\newline

For $\mathbf{Q}_1$ in dimension $n+1=3$, the answer is positive and follows from Schoen's curvature bound for stable minimal surfaces \cite{Sc}, (see also Maximo \cite{Maximonote}, and alternatively Theorem \ref{maina}).

\subsection{Examples and counterexamples in higher dimensions} 
Answering the two following conjectures would be highly desirable. The first one is related to the well-known scarcity of examples of minimal hypersurfaces with small singular sets. \newline

$\mathbf{C}_2:$
There are examples showing that Theorem \ref{mainb} is optimal.
\newline 

$\mathbf{C}_3:$ 
There are counterexamples to Theorem \ref{maina} in some high dimensions.
\newline

A heuristic for Conjecture $\mathbf{C}_2$ is the following. Suppose that there is an $n$-dimensional minimal hypersurface in the flat $\mathbb{R}^{n+1}$ ($n+1\geq 8$) with finite index, finitely many smooth ends asymptotic to a same hyperplane and a singular set of positive $(n-7)$-Hausdorff measure (the existence of such objects seems plausible). Then by gluing together in a periodic way $k^n$ local models given by such hypersurfaces and rescaling, one might hope to construct a sequence of uniformly bounded $n$-volume minimal hypersurfaces $\Sigma_k$ in the flat torus $(S^1)^{n+1}$, with $\mathcal{H}^{n-7}(\Sing(\Sigma)) \approx k^n. \frac{1}{k^{n-7}} \approx k^7$ and $\ind(\Sigma) \approx k^n$.

We were motivated to formulate Conjecture $\mathbf{C}_3$ in analogy with the case of Einstein manifolds: Cheeger-Naber \cite{CheegerNabercodim4} proved that the family of closed $v$-noncollapsed Einstein $4$-manifolds with diameter bounded by $D$ contains finitely many diffeomorphism types, but Hein-Naber constructed in an unpublished work a sequence of Ricci-flat K\"{a}hler geodesic unit balls of complex dimension $n\geq 3$ uniformly noncollapsed but with $n$-th Betti number going to infinity. Note that variants of Conjecture $\mathbf{C}_3$ were considered by many authors previously (one of the last articles on the subject is a recent preprint of Simon \cite{Simon21} about the possible singular sets of stable minimal hypersurfaces).

\subsection{Sheeting number of min-max minimal surfaces in dimension $3$} 
About the sheeting number in dimension $3$, note that curiously in Theorem \ref{oideal} the powers for the index and the area are exactly the ones predicted by min-max theory for a sequence $\{\Sigma_p\}$ of minimal surfaces produced by usual min-max methods \cite{MaNeinfinity,GasGua}: the minimal surface $\Sigma_p$ has generically index $p$ and area growing as $p^{\frac{1}{3}}$ \cite{ChodMant,Zhoumultiplicityone,MaNemultiplicityone}.
 We are led to ask the following question.\newline

$\mathbf{Q}_3:$
If the metric $g$ is generic or if  $\Ric_g>0$,  is the sheeting number $N(\Sigma_p)$ uniformly bounded for the sequence $\{\Sigma_p\}$  of minimal surfaces constructed by min-max methods in \cite{MaNeinfinity}\cite{GasGua}?\newline

This question can be considered as an attempt to localize the Multiplicity One conjecture of Marques-Neves \cite{MaNeindexbound} (which was solved in \cite{ChodMant,Zhoumultiplicityone}). 

\subsection{Accumulation phenomenon around stable minimal surfaces}
In our joint work with Marques and Neves \cite{MaNeSong}, we formed the heuristic picture that minimal surfaces obtained by usual min-max methods \cite{MaNeinfinity} in a closed manifold $(M^3,g)$ should equidistribute. On the other hand, the sequence of minimal surfaces $\{\Gamma^{(p)}\}$ constructed in our proof of Yau's conjecture \cite[Theorem 10]{AntoineYau} seems to have the opposite behavior, as suggested by Theorem \ref{oideal} and (\ref{hint}) in Section \ref{reamrks}. We conjecture that these surfaces accumulate around a stable minimal surface. More generally\footnote{This has been proved by Xin Zhou and the author in \cite{Song-Zhou20}.}:\newline

$\mathbf{C}_4:$
For any strictly stable $2$-sided closed embedded minimal surface $\Sigma$ in a closed manifold $(M^3,g)$ with a generic metric, there should be a sequence $\{\Gamma_j\}$ of closed embedded minimal surfaces different from $\Sigma$ so that 
$$\frac{ [\Gamma_j]}{\area(\Gamma_j)} \to \frac{[\Sigma]}{\area(\Sigma)}$$
as varifolds when  $j\to \infty$.\newline

The reader can also compare this conjecture with the minimal surfaces constructed by Colding-De Lellis in \cite{ColdingDeLellis}, which accumulate around a stable minimal $2$-sphere.

\section{Appendix}

Let $F$ be a field and let $b^k(.)$ denote the Betti numbers of a set, namely the dimension over $F$ of the cohomology groups $H^k(.,F)$.
The following topological lemma is a well-known result stating that the Betti numbers of a manifold can be bounded from above by the number of open sets in a ``good'' cover with additional properties. See the section about \v{C}ech cohomology in \cite[Appendix 4]{Petersen}, see also Lemma 12.12 in \cite{BallGromSchr} where the authors prove it for the Betti numbers defined as the dimensions of the homology groups $H_k(.,F)$.

 \begin{lemme} \label{lemmetopo}
 Let $M$ be a connected manifold of dimension $d$ and let $\bar{\mu}$ be a positive integer. There is a constant $C=C(d,\bar{\mu})$ such that the following holds. Let $\{U_i\}_{i=1}^m$ be connected open sets of $M$ covering $M$ such that 
 \begin{itemize}
% \item  for all  $i \in \{1,...,m\}$, $$\sum_{k=1}^d b^k(U_i) \leq \bar{\mu},$$
  \item for any different $i_1,,..., i_l \in \{1,...,m\} $ where $l\geq 1$, 
 $$\sum_{k=0}^d b^k(U_{i_1}  \cap ...\cap U_{i_l}) \leq \bar{\mu},$$
 \item
for any $j\in \{1,...,m\}$, $U_j$ intersects at most $\bar{\mu}$ other open sets in $\{U_i\}_{i=1}^m$.
\end{itemize}

 Then 
 $$\sum_{k=0}^d b^{i}(M) \leq C m.$$
 
 \end{lemme}
 \begin{proof}
 For the reader's convenience, we sketch the arguments given in \cite[Lemma 12.12]{BallGromSchr} .
Fix $a \in \{1,...,m\}$ and $k\geq 0$. Let 
$$X:= \bigcup_{i=1}^a U_i, \quad U:= \bigcup_{i=1}^{a-1} U_i, \quad V:= U_a.$$
Then by Mayer-Vietoris'  long exact sequence for singular cohomology over $F$, 
$$H^{k-1}(U \cap V) \rightarrow H^k(X) \rightarrow H^k(U)\oplus H^k(V) $$
so we have
$$b^k(X) \leq b^{k-1}(U\cap V) + b^k(U) + b^k(V). $$ 
Since $U\cap V = \bigcup_{i=1}^{a-1}(U_i \cap U_a)$ and $U_a$ intersects at most $\bar{\mu}$ other open sets $U_i$, from another Mayer-Vietoris' sequence argument using the uniform bound on the total Betti number of intersections of $U_i$, we obtain that $b^{k-1}(U\cap V)$ is bounded by a constant depending only on $\bar{\mu}$ and $k$. By assumption $b^k(V)$ is bounded by $\bar{\mu}$. From here, one can use an induction argument on the number $a$ of open sets to conclude.
 \end{proof}

%ADD TWO-Pieces Decomposition
%Dependance de lambda sur A

\bibliographystyle{plain}
\bibliography{biblio20_06_04}

\end{document}